\numberwithin{equation}{section}
\newcommand{\be}{\begin{eqnarray}}
\newcommand{\ee}{\end{eqnarray}}
\newcommand{\ce}{\begin{eqnarray*}}
\newcommand{\de}{\end{eqnarray*}}
\newtheorem{theorem}{Theorem}[section]
\newtheorem{lemma}[theorem]{Lemma}
\newtheorem{proposition}[theorem]{Proposition}
\newtheorem{corollary}[theorem]{Corollary}
\theoremstyle{remark}
\newtheorem{assumption}[theorem]{Assumption}
\newtheorem{example}[theorem]{Example}
\newtheorem{remark}[theorem]{Remark}
\newtheorem{definition}[theorem]{Definition}
\crefname{eqn}{Equation}{Equations}
\crefname{assumption}{Assumption}{Assumptions}
\crefname{innercustomthm}{Condition}{Conditions}
\def\e{{\mathrm{e}}}
\def\g{\gamma}
\def\oops{\rho}
\def\dd{\mathrm{d}}
\def\tlp{\tilde {\mathbb{L}_{p}}}
\def\<{{\langle}}
\def\>{{\rangle}}
\def\({{\Big(}}
\def\){{\Big)}}
\def\bx{{\mathbf{x}}}
\def\dif{d}
\def\min{{\mathord{{\rm min}}}}
\def\={&\!\!=\!\!&}
\def\bt{\begin{theorem}}
\def\et{\end{theorem}}
\def\bl{\begin{lemma}}
\def\el{\end{lemma}}
\def\br{\begin{remark}}
\def\er{\end{remark}}
\def\bd{\begin{definition}}
\def\ed{\end{definition}}
\def\bp{\begin{proposition}}
\def\ep{\end{proposition}}
\def\bc{\begin{corollary}}
\def\ec{\end{corollary}}
\def\bx{\begin{example}}
\def\ex{\end{example}}
\def\cF{{\mathcal F}}
\def\cff{\cF}
\def\mE{{\mathbb E}}
\def\E{\mE}
\def\mP{{\mathbb P}}
\def\mR{{\mathbb R}}
\def\geq{\geqslant}
\def\leq{\leqslant}
\def\c{\mathord{{\bf c}}}
\def\tlpone{\tilde {\mathbb{L}_{p_1}}}
\newcommand{\R}{{\mathbb R}}
\newcommand{\tand}{\quad\text{and}\quad}
\newcommand{\norm}[1]{{\left\vert\kern-0.25ex\left\vert\kern-0.25ex\left\vert #1
    \right\vert\kern-0.25ex\right\vert\kern-0.25ex\right\vert}}
\renewcommand{\le}{\leq}
\renewcommand{\ge}{\geq}
\begin{document}
	\title{Expansion and attraction of RDS:  long time behavior of the solution to singular SDE}
	\date{November 28, 2022}
	\author{Chengcheng Ling \and Michael Scheutzow}
\address{Chengcheng Ling:
Technische Universit\"at Wien,
Institute of Analysis and Scientific Computing,
 1040 Wien, Austria
\\
Email: chengcheng.ling@asc.tuwien.ac.at
 }
\address{ Michael Scheutzow:
Technische Universit\"at Berlin,
Fakult\"at II, Institut f\"ur Mathematik,
10623 Berlin, Germany
\\
Email: ms@math.tu-berlin.de
 }

	\begin{abstract}
	
	We provide a framework for studying the expansion rate of the image of a bounded set under a flow in Euclidean space and apply it to stochastic differential equations (SDEs for short) with singular coefficients. If the singular drift of the SDE can be split into two terms, one of which is singular and the radial component of the other term has a radial component of sufficient strength in the direction of the origin, then the random dynamical system generated by the SDE admits a pullback attractor.\\ 
	
			\noindent {{\bf AMS 2020 Mathematics Subject Classification:} 60H10, 60G17, 60J60,  60H50}\\
			
		\noindent{{\bf Keywords:}  semi-flow,  random dynamical system, pullback attractor, singular stochastic differential equation,  Brownian motion, dispersion of random sets, chaining, Krylov estimate, regularization by noise,  elliptic partial differential equation, Zvonkin transformation} 
	\end{abstract}

\maketitle	

\section{Introduction}
\emph{Regularization by noise}, i.e.~existence and uniqueness of solutions under the assumption of non-degenerate noise, has been established for a large class of singular stochastic differential equations (SDEs). 
It was shown recently that these equations also generate a random dynamical system (RDS), see \cite{LSV},
and like in the classical (non-singular) case it therefore seems natural to establish asymptotic properties
of these RDS for large times, like expansion rates of bounded sets and the existence of attractors 
or even {\em synchronization} (meaning that the attractor is a single random point).\\
%
%

We consider an SDE on $\R^d$ with time homogeneous coefficients
\begin{align}\label{sde0}
\dif X_t=b(X_t)\,\dd t+\sigma(X_t)\,\dd W_t,\quad X_s=x\in\mathbb{R}^d,\quad t\geq s \geq 0,
\end{align}           
where $d\geq1$, $b: \mR^d\rightarrow\mR^d $ and $\sigma=(\sigma_{ij})_{1\leq i,j\leq d}: \mR^d\rightarrow L(\mathbb{R}^d)$ $(:=d\times d$ real valued matrices$)$ are measurable, and $(W_t)_{t\geq0}$ is a standard $d$-dimensional Brownian motion defined on some filtered probability space $(\Omega,\mathcal{F},(\mathcal{F}_t)_{t\geq 0},\mP)$. We assume that $b\in\tilde{L}_p(\mathbb{R}^d)$ (defined in \cref{notations}),  so $b$ does not have to be continuous nor bounded, and $\sigma\sigma^*$ ($\sigma^*$ denotes the matrix transpose of $\sigma$) is bounded and uniformly elliptic and $\nabla \sigma\in\tilde{L}_p(\mathbb{R}^d)$ with $p>d$ (time homogeneous \emph{Krylov-R\"ockner} condition). These are sufficient conditions for the well-posedness of the equation \eqref{sde0}, see \cite{KR} and \cite{XXZZ}. They also imply the existence of a flow  and random dynamical system (RDS) generated by the solution  to \eqref{sde0} \cite{LSV}.\\ 

First we analyse the linear expansion rate of the flow generated by a singular SDE. In classical results,  see e.g. \cite{MS},\cite{DS},  Lipschitz continuity or one-sided Lipschitz continuity of the coefficients of the SDE is assumed to obtain bounds on the expansion rate. Obviously we lack  these properties in our current setting. Instead, we assume the noise to be non-degenerate, so we can apply the \emph{Zvonkin transformation} to get an SDE which has Lipschitz-like coefficients and this SDE is (in an appropriate sense) equivalent to the original one \eqref{sde0}. 
The  \emph{Zvonkin transformation} was invented by A. K. Zvonkin in \cite{Zv}  for $d=1$ and then generalized by A. Yu. Veretennikov in \cite{VE} to $d\geq 1$. It has become a rather  standard tool to study  \emph{well-posedness} of  singular SDEs, see e.g.  \cite{Zhang2011}, \cite{XieZhang2016} and \cite{XXZZ}. 
This tool heavily relies on  regularity estimates of the solution to Kolmogorov's equation corresponding to \eqref{sde0} which  can be found   for instance in \cite{Krylov} in the classical setting.  
In this paper we adapt the method to the study of the RDS induced by singular SDEs. We show that  the flow expands linearly (see \cref{2-point-sde}), a property which was established for non-singular SDEs with  not necessarily non-degenerate noise in \cite{CSS1,CSS2,LS1,LS2,MS}. 
Our proof mainly depends on stability estimates (see \cref{Transformed-two-point}). These kind of  estimates were studied before, see for instance \cite{GL}, \cite{Zhang2011} and \cite{Zhang2017}, but the dependence of the constants  on the coefficients was not specified.   
We give a formula in \cref{Transformed-two-point} which states this dependence explicitly. It also yields the expansion rate constant in \cref{2-point-sde}.

Secondly, we aim at  conditions which guarantee the existence of an  attractor for the RDS generated by a singular SDE. Clearly, one can not expect that an attractor exists   without further conditions (an example without attractor is the case in which the drift is zero and the diffusion is  constant). 
Since \cite{CF}, numerous papers appeared in which the existence of attractors for various finite and infinite dimensional  RDS was shown, e.g. \cite{CS}, \cite{FGS1}, \cite{FGS2}, \cite{Gess}, \cite{GLS}, \cite{DS}, \cite{KNS} and \cite{ZZ}. 
A common way to prove the existence of an attractor is to show the existence of a random compact \emph{absorbing} set and then to apply the criterion from \cite[Theorem 3.11]{CF}.   
Just like \cite{DS}, we will use a different and more probabilistic criterion from \cite{CDS} (Proposition \ref{flow-attractor}). Roughly speaking, all one has to show is that the image of a very large ball will be contained inside a fixed large ball after a (deterministic)  long time with high probability. In \cite{DS} this was 
shown under the assumption that the diffusion is bounded and Lipschitz and the drift $b(x)$ has a component of sufficient strength (compared to the diffusion) in the direction of the origin for large $|x|$. In our set-up, this condition is too restrictive. Instead, we assume that the drift can be written in the form $b=b_1+b_2$,
in which $b_1$ is singular and $b_2$ has a component of sufficient strength (compared to the diffusion and the localized $L_p$-norm of $b_1$) in the direction of the origin for large $|x|$.

\subsection*{Structure of the paper}
We introduce notation and the main results in \cref{notation}.
In \cref{section3} we study the expansion rate of the  diameter of the image of a bounded set under a flow under rather general conditions. These results are minor modifications of results contained in \cite{MS} which are proved by {\em chaining techniques}. \cref{section4} contains estimates on functionals of the  solution to the singular SDE, namely quantitative versions of Krylov's estimates and Khasminskii's lemma. The first part of the main results of this paper is presented in \cref{section5}, i.e. the linear expansion rate of the diameter of the image of a bounded set under the flow generated by the solution to a singular  SDE. In \cref{section6} we show the existence of an attractor of the RDS generated by the singular SDE. In \cref{A} we study  regularity estimates of elliptic partial differential equations  with emphasis on the dependence on  the coefficients. We believe that these estimates are of independent interest.  

\section{Notation and main results} \label{notation}
\subsection{Notation}\label{notations} We denote the Euclidean norm on $\R^d$ by $|.|$ and the induced norm on $L(\R^d)$ or on $L(L(\R^d))$ by $\|.\|$. 
Recall that the trace of $a:=(a_{ij})_{1\leq i,j\leq d}:=\sigma\sigma^*$ satisfies $\mathrm{tr}(a)=\sum_{i,j=1}^d\sigma_{ij}^2$, where $\sigma^*$ denotes the transpose of $\sigma \in L(\R^d)$.
 For $p\in[1,\infty)$, let ${L}_p(\mathbb{\mR}^d)$ denote the space of all real Borel measurable functions on $\mathbb{R}^d$ equipped with the norm
$$\Vert f\Vert_{{L}_p}:=\Big(\int_{\mathbb{R}^d}|f(x)|^p\,\dd x\Big)^{1/p}<+\infty$$
and  $L_\infty$ denotes the space of all bounded and measurable functions equipped with the norm
$$\Vert f\Vert_\infty:=\Vert f\Vert_{L_\infty}:=\sup_{x\in\mathbb{R}^d}|f(x)|.$$
We introduce the notion of  a localized $L_p$-space for $p\in[1,\infty]$: for fixed $\delta>0$,
\begin{align}
    \label{chi}
   \tilde L_p(\mathbb{R}^d):=\{f:\Vert f\Vert_{\tilde L_p}:=\sup_{z}\Vert \xi_\delta^zf\Vert_{L_p}<\infty\},
\end{align}
 where  $\xi_\delta(x):=\xi(\frac{x}{\delta})$ and  $\xi_\delta^z(x):=\xi_\delta(x-z)$ for $x,z\in\mR^d$, $\xi\in C_c^\infty(\mR^d;[0,1])$ is a smooth function with $\xi(x)=1$ for $|x|\leq 1/2$,  and $\xi(x)=0$ for $|x|>1$. For $(\alpha,p)\in\mathbb{R}\times[1,\infty)$, let  $H^{\alpha,p}(\mR^d)$  be the usual Bessel potential space with norm 
 $$\Vert f\Vert_{H^{\alpha,p}}:=\Vert(\mathbbm{I}-\Delta)^{\alpha/2} f\Vert_{L_p},$$
 where $(\mathbbm{I}-\Delta)^{\alpha/2} f$ is defined via Fourier’s transform 
 $$(\mathbbm{I}-\Delta)^{\alpha/2} f:= \mathcal{F}^{-1}((1+|\cdot|^2)^{\alpha/2}\mathcal{F}f).$$
 The localized $H^{\alpha,p}$-space is defined as
 \begin{align*}
     \tilde H^{\alpha,p}:=\{f:\Vert f\Vert_{\tilde H^{\alpha, p}}:=\sup_{z}\Vert \xi_\delta^zf\Vert_{ H^{\alpha, p}}<\infty\}.
 \end{align*} 
From \cite[Section 2]{XXZZ} and \cite[Proposition 4.1]{Zhangzhao-NS} we know that the space $\tilde H^{\alpha,p}$ does not depend on the choice of $\xi$ and $\delta$, but the norm does, of course. More precisely, by \cite[Proposition 4.1]{Zhangzhao-NS}, for the $\tilde L_p$-norms with different $\delta$, say $\delta_1$ and $\delta_2$ and $\delta_1<\delta_2$,  if we use the notation $(\tilde L_{p})_\delta $ to denote the  $\tilde L_p$ space with support radius $\delta$ for localization, then 
\begin{align}\label{ineq:norms}
N_1\Vert\cdot\Vert_{(\tilde L_p)_{\delta_1}}\leq \Vert\cdot\Vert_{(\tilde L_p)_{\delta_2}}\leq N_2\Big(\frac{\delta_2}{\delta_1}\Big)^d \Vert\cdot\Vert_{(\tilde L_p)_{\delta_1}},
\end{align}
where $N_1, N_2$ are  constants independent of $\delta_1,\delta_2$. For convenience we take $\delta=1$ in the following. For further properties of these spaces we refer to \cite{XXZZ}. In the following, all derivatives should be interpreted in the weak sense. Occasionally we will use Einstein's summation convention (omitting the summation sign for indices appearing twice). We will often use the notation $r_+=\max\{r,0\}$ for 
the positive part of $r\in\mR$, $a\vee b:=\max\{a,b\}$ and $a\wedge b:=\min\{a,b\}$. 
    
    
 \subsection{Preliminaries} 
 In the following, all random processes will be defined on a given probability space
$(\Omega,\cF,\mP)$.
\begin{definition}
    A  flow $\phi$ on a Polish (i.e.~separable and completely metrizable) space $X$ equipped with its Borel-$\sigma$-algebra $\mathcal{X}=\mathcal{B} (X)$
   is a  measurable map
    \begin{align*}
        \phi: \left\{ (s,t,x,\omega) \in [0, \infty)^2\times X \times \Omega :  s \leq t < \infty \right\} \rightarrow X
    \end{align*}
    such that, for each $\omega \in \Omega$,
    \begin{enumerate}
     \item[(1)] $\phi_{s,s} (x) = x$ for all $x \in X$ and $s\geq 0$,
        \item[(2)] $(s,t,x) \mapsto \phi_{s,t}(x)$ is continuous,
        \item[(3)] for each $s,t$, the map $x\mapsto \phi_{s,t}(x)$ is one-to-one,
        \item[(4)] for all $0\leq s \leq t < u$ and $x \in X$,   the following identity holds
        \begin{align*}
            \phi_{s,u}(x) = \phi_{t,u} (\phi_{s,t} (x)).
        \end{align*}
       
    \end{enumerate}
\end{definition}
Next, we define the concepts of a metric dynamical system and a  random dynamical system.

\begin{definition}\label{MDS}
A  \emph{metric dynamical system} (MDS for short)
$\theta=(\Omega,\mathcal{F},\mP,\{\theta_t\}_{t\in\mathbb{R}})$ is a
probability space $(\Omega,\mathcal{F},\mP)$ with a family of measure preserving transformations $\{\theta_t:\Omega\rightarrow\Omega,t\in\mathbb{R}\}$ such that
     \begin{itemize}
     \item[(1)] $\theta_0=\mathrm{id}, \theta_t\circ\theta_s=\theta_{t+s}$ for all $t,s\in\mathbb{R}$;
     \item[(2)] the map $(t,\omega)\mapsto \theta_t\omega$ is measurable and $\theta_t\mP=\mP$ for all $t\in\mathbb{R}$.
     \end{itemize}
 \end{definition}

 \begin{definition}[RDS, \cite{A}]
 A (global) \emph{ random dynamical system} (RDS)  $(\theta, \varphi)$
    on a Polish space $(X,d)$ over an MDS $\theta$ is a mapping
    $$\varphi:\left\{ (s,x,\omega)\in [0,\infty) \times X \times \Omega \right\}\rightarrow X$$
    such that, for each $\omega \in \Omega$,
    \begin{itemize}
     \item[(1)] measurability: $\varphi$ is $(\mathcal{B}([0,\infty))\otimes\mathcal{X}\otimes\mathcal{F},\mathcal{X})$-measurable,
     \item[(2)] $(t,x) \mapsto \varphi_t(x)$ is continuous,
     \item[(3)] $\varphi$ satisfies the following (perfect) {\em cocycle property}:      for all $t,s\geq0$, $x \in X$,
     \begin{align}\label{pcocloc}\varphi_0(.,\omega)=\mathrm{id},\quad \varphi_{t+s}(x,\omega)=\varphi_t(\varphi_s(x,\omega),\theta_s\omega)
     \end{align}
     \end{itemize}
       
 \end{definition}

Clearly, an RDS $\varphi$ induces a flow via $\phi_{s,t}(x):=\varphi_t(x,\theta_s.)$. We say that an SDE generates a flow resp.~an RDS if its solution map has a modification which is a flow resp.~an RDS.
The following study is based on the flow generated by the solution to the SDE with singular drift. Therefore we state  the result from \cite[Theorem 4.5, Corollary 4.10]{LSV} on the  existence of a global semi-flow and a global RDS for singular SDEs under the following condition.
\begin{assumption}\label{Ass}
 For $p,\oops\in(2d,\infty)$  assume
\begin{itemize}
\item[$(i)$] $b\in \tilde L_{p}(\mathbb{R}^d)$, $\sigma:\R^d \to L(\R^d)$ is measurable,  $\|\nabla\sigma\|\in \tilde L_{\oops}(\mathbb{R}^d)$.
\item [$(ii)$] There exist $K_1,K_2>0$ such that for $a:=\sigma\sigma^*$   we have $$K_1|\zeta|^2\leq\langle a(x)\zeta,\zeta\rangle\leq K_2|\zeta|^2,\quad \forall \zeta,x \in\mathbb{R}^d.$$  
\end{itemize}
\end{assumption}
\begin{remark}
   Note that $\tilde L_p \subset \tilde L_{p'}$ whenever $p>p'$. Therefore, if \cref{Ass} holds with different values of $p$ and $\oops$, then it also holds with the larger of the two numbers replaced by 
   the smaller one. In particular, the following result which was formulated for $p=\oops$ can still be applied.
\end{remark}

 \begin{theorem}\cite[Theorem 4.5, Corollary 4.10]{LSV}\label{sSDE}
 If  \cref{Ass} holds,
    then  the SDE \eqref{sde0} admits a flow $\phi$ and a corresponding RDS $\varphi$.
 \end{theorem}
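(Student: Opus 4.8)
This result is established in \cite[Theorem 4.5, Corollary 4.10]{LSV}; for the reader's convenience I only sketch the route one would take. The argument combines three ingredients: (a) strong well-posedness of \eqref{sde0} via the \emph{Zvonkin transformation}; (b) the construction of a version of the solution map that is jointly continuous in $(s,t,x)$ and injective in $x$ — the flow $\phi$ — by means of moment estimates together with Kolmogorov's continuity theorem; and (c) the upgrade of this flow to a perfect cocycle over the Wiener shift, which yields the RDS $\varphi$.

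\textbf{Step 1 (Zvonkin transformation).} For $\lambda>0$ solve the elliptic system $\tfrac12 a_{ij}\partial_{ij}u^k + b_i\partial_i u^k - \lambda u^k = -b^k$, $k=1,\dots,d$, on $\R^d$. Under \cref{Ass}, the regularity theory for such equations with coefficients in $\tilde L_p$ (developed in \cref{A}; see also \cite{XXZZ}) provides a solution $u=(u^1,\dots,u^d)\in\tilde H^{2,p}$ with $\|u\|_\infty+\|\nabla u\|_\infty\to 0$ as $\lambda\to\infty$. Fixing $\lambda$ large, $\Phi:=\mathrm{id}+u$ is then a $C^1$-diffeomorphism of $\R^d$ with $\Phi$ and $\Phi^{-1}$ having bounded first derivatives. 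Applying the generalized It\^o formula to $Y_t:=\Phi(X_t)$ shows that any solution $X$ of \eqref{sde0} is carried to a solution of
\[
\dif Y_t=\tilde b(Y_t)\,\dd t+\tilde\sigma(Y_t)\,\dd W_t,\qquad \tilde b=\lambda\,u\circ\Phi^{-1},\quad \tilde\sigma=\big((I+\nabla u)\,\sigma\big)\circ\Phi^{-1},
\]
whose coefficients are bounded, with $\tilde\sigma$ uniformly elliptic, and $\tilde b,\tilde\sigma$ in a localized Sobolev space $\tilde H^{1,q}$ with $q$ depending on $p$ and $\oops$. Since $\Phi$ is invertible the two equations are equivalent, so strong existence and pathwise uniqueness for \eqref{sde0} reduce to the corresponding statements for the transformed equation (cf.~\cite{KR}, \cite{Zhang2011}).

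\textbf{Step 2 (flow).} The transformed SDE is bounded, nondegenerate and has $W^{1,q}_{\mathrm{loc}}$-coefficients with $q>2d$, so the theory of stochastic flows applies: mollifying the coefficients yields Kunita's smooth flows, and passing to the limit in the spirit of the DiPerna--Lions / Le Bris--Lions theory (cf.~\cite{Zhang2011}) one obtains a modification $\tilde\phi_{s,t}$ that is a stochastic flow of homeomorphisms. The analytic core is a family of moment bounds of the form $\mathbb{E}\big[\sup_{t\le T}|\tilde\phi_{s,t}(x)-\tilde\phi_{s',t'}(x')|^{q}\big]\le C_T\big(|x-x'|+|t-t'|^{1/2}+|s-s'|^{1/2}\big)^{q}$ for $q$ large, proved via the Burkholder--Davis--Gundy and Gronwall inequalities together with Krylov-type estimates to control the integrated Sobolev terms; Kolmogorov's continuity theorem then provides a jointly continuous version, while injectivity of $x\mapsto\tilde\phi_{s,t}(x)$ comes from pathwise uniqueness applied in a two-sided fashion. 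Transporting back, $\phi_{s,t}:=\Phi^{-1}\circ\tilde\phi_{s,t}\circ\Phi$ is jointly continuous and injective in $x$, satisfies $\phi_{s,s}=\mathrm{id}$, and obeys the flow identity $\phi_{s,u}=\phi_{t,u}\circ\phi_{s,t}$, the latter being a direct consequence of pathwise uniqueness of \eqref{sde0}; hence $\phi$ is a flow.

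\textbf{Step 3 (RDS) and the main obstacle.} Realize the noise on the canonical two-sided Wiener space $\Omega=C(\R;\R^d)$ with Wiener measure $\mathbb{P}$ and shift $\theta_t\omega=\omega(\cdot+t)-\omega(t)$, which is a $\mathbb{P}$-preserving, ergodic MDS; put $\varphi_t(x,\omega):=\phi_{0,t}(x)$, the flow being built from $\omega|_{[0,\infty)}$. For each fixed $s\ge 0$ and $x$, pathwise uniqueness of \eqref{sde0} gives $\varphi_{t+s}(x,\cdot)=\varphi_t(\varphi_s(x,\cdot),\theta_s\cdot)$ for $\mathbb{P}$-a.e.~$\omega$; using the continuity of $\varphi$ in $(t,x)$ one removes the dependence of the exceptional null set on $(s,t,x)$ by a standard perfection argument (cf.~\cite{A}), producing the perfect cocycle \eqref{pcocloc}. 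Together with the measurability and continuity already established, $(\theta,\varphi)$ is then an RDS. The main obstacle is Step 2: obtaining the moment estimates with the correct modulus of continuity in $(s,t,x)$ when the (transformed) coefficients are merely Sobolev. This is exactly where the threshold $p,\oops>2d$ is needed — stronger than the $p,\oops>d$ sufficient for bare well-posedness — so that one genuinely gets a flow of homeomorphisms and Kolmogorov's theorem is applicable; the quantitative Krylov and Khasminskii estimates gathered in \cref{section4} are designed for precisely this purpose.
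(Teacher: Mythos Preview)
The paper does not prove this statement: Theorem~\ref{sSDE} is simply quoted from \cite[Theorem 4.5, Corollary 4.10]{LSV} without any argument, so there is no ``paper's own proof'' to compare against. Your sketch is therefore strictly more than what the paper provides, and the outline you give --- Zvonkin transform to regularize the drift, moment estimates plus Kolmogorov to obtain a jointly continuous injective modification, then perfection over the Wiener shift to pass from the crude cocycle to a perfect one --- is indeed the route taken in \cite{LSV} and is consistent with the tools assembled elsewhere in the present paper (Section~\ref{section5}, Appendix~\ref{A}). One small caveat: the quantitative Krylov and Khasminskii estimates of Section~\ref{section4} are developed here for the \emph{later} results on expansion rates and attractors, not for the existence of the flow itself; the flow construction in \cite{LSV} predates them and uses the qualitative versions from \cite{XXZZ}. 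But this is a matter of attribution rather than correctness.
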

 We will often write $\psi_t(x)$ instead of $\phi_{0,t}(x)$. Abusing notation we will sometimes say "Let $\psi_t(x)$ (or just $\psi$) be a flow ..." instead of "Let $\phi_{s,t}(x), x\in \R^d, 0\leq s \leq t<\infty$ be a flow and $\psi_t(x):=\phi_{0,t}(x)$, $t \geq 0,x \in \R^d$ ...".
\begin{definition}[Attractor, \cite{CF}]
Let   $\varphi$ be an  RDS over the MDS $\theta=(\Omega,\mathcal{F},\mP,\{\theta_t\}_{t\in\mathbb{R}})$. The random set $A(\omega)$ is a  \emph{(pullback) attractor} if
\begin{itemize}
    \item[(1)] measurability: $A(\omega)$ is a random element in the metric space of nonempty compact subsets of $X$ equipped with the Hausdorff distance,
    \item[(2)] invariance property: for $t>0$ there exists a set $\Omega_t$ with full measure such that  $$\varphi(t,\omega)(A(\omega))=A(\theta_t\omega),\quad \forall \, \omega\in\Omega_t,$$
    \item[(3)] pull-back limit: almost surely, for all bounded closed sets $B\subset X$,
    $$\lim_{t\rightarrow\infty}\sup_{x\in B}\text{dist}(\varphi(t,\theta_{-t}\omega)(x),A(\omega))=0.$$
\end{itemize}
\end{definition}
One way to verify the existence of an attractor is the following criterion. 
 \begin{proposition}(\cite{CDS}, \cite[Proposition 2.3]{DS})\label{flow-attractor}
 Let   $\varphi$ be an  RDS over the MDS $\theta=(\Omega,\mathcal{F},\mP,\{\theta_t\}_{t\in\mathbb{R}})$. Then the following are equivalent:
 \begin{itemize}
     \item[(i)]  $\varphi$  has an attractor,
     \item[(ii)] $\forall$ $r>0$, $\lim_{R\rightarrow\infty}\mP\Big(\omega \in \Omega: B_r\subset\bigcup_{s=0}^\infty\bigcap_{t\geq s}\varphi^{-1}(t,B_R,\theta_{-t}\omega)\Big)=1.$
 \end{itemize}
 \end{proposition}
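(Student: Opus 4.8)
The plan is to prove the two implications separately. To set up notation, write $\Psi^\omega_t:=\varphi(t,\cdot\,,\theta_{-t}\omega)\colon X\to X$ for the pull-back dynamics, so that for a set $B\subseteq X$ one has $B\subseteq\varphi^{-1}(t,B_R,\theta_{-t}\omega)$ precisely when $\Psi^\omega_t(B)\subseteq B_R$, and the event in (ii) becomes
\[
E_{r,R}:=\big\{\omega:\ \text{for every }x\in B_r\text{ there is }s\ge0\text{ with }\Psi^\omega_t(x)\in B_R\text{ for all }t\ge s\big\}.
\]
Throughout I use that closed balls in $X$ are compact --- the relevant case being $X=\R^d$, and more generally any proper metric space --- so that an absorbing ball is a compact absorbing set and the classical $\omega$-limit-set machinery of \cite{CF} is available. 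For (i) $\Rightarrow$ (ii), I would assume an attractor $A$ exists, set $\rho(\omega):=\sup_{y\in A(\omega)}|y|$ (a.s.\ finite since $A(\omega)$ is a.s.\ compact), and apply the pull-back limit property of $A$ to the closed ball $B_r$: this gives, almost surely, $\sup_{x\in B_r}\text{dist}(\Psi^\omega_t(x),A(\omega))\to0$, hence almost surely an $s(\omega)$ with $\Psi^\omega_t(B_r)\subseteq\{y:\text{dist}(y,A(\omega))\le1\}\subseteq B_{\rho(\omega)+1}$ for all $t\ge s(\omega)$. Therefore $\{\rho\le R-1\}\subseteq E_{r,R}$ up to a null set, and since $\mP(\rho<\infty)=1$ one obtains $\mP(E_{r,R})\ge\mP(\rho\le R-1)\to1$ as $R\to\infty$.

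For (ii) $\Rightarrow$ (i) --- the substantive direction --- the plan is the standard construction of a pull-back attractor out of a random compact absorbing set. Since $E_{r,R}$ increases in $R$, (ii) gives $\mP\big(\bigcup_{R\in\mN}E_{r,R}\big)=1$ for each $r$, and since $E_{r,R}\supseteq E_{r',R}$ whenever $r\le r'$, intersecting over $r\in\mN$ yields a $\mP$-full set on which the pull-back orbit of every ball $B_r$ is eventually trapped inside some ball. I would then convert this to absorption that is uniform in the initial point and, by a monotonicity/diagonal argument over $r\in\mN$, extract a single random radius $R^{\ast}(\omega)$, a.s.\ finite, such that $K(\omega):=B_{R^{\ast}(\omega)}$ absorbs every bounded set --- in particular itself, so $\Psi^\omega_t(K(\omega))\subseteq K(\omega)$ for all $t$ large. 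Then I would set
\[
A(\omega):=\bigcap_{T\ge0}\ \overline{\bigcup_{t\ge T}\Psi^\omega_t\big(K(\omega)\big)},
\]
which, by the absorbing property, is for $T$ large a nonincreasing family of nonempty closed subsets of the compact set $K(\omega)$, hence nonempty and compact.

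It then remains to check the three attractor axioms. Measurability of $\omega\mapsto A(\omega)$ into the space of nonempty compact subsets of $X$ with the Hausdorff metric follows from measurability of $\varphi$, once the unions and intersections over $[0,\infty)$ are replaced by countable ones over the rationals (legitimate by continuity in $t$). For the pull-back limit property, any bounded closed $B$ lies in some $B_r$; if $x_n\in B$ and $t_n\uparrow\infty$, then $\Psi^\omega_{t_n}(x_n)\in K(\omega)$ for $n$ large, so this sequence has a subsequential limit, which lies in $A(\omega)$ by construction --- which is exactly $\sup_{x\in B}\text{dist}(\Psi^\omega_t(x),A(\omega))\to0$. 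Finally, strict invariance $\varphi(t,\cdot\,,\omega)(A(\omega))=A(\theta_t\omega)$ is the classical $\omega$-limit-set identity: from the cocycle property one checks $\varphi(t,\cdot\,,\omega)\circ\Psi^\omega_u=\Psi^{\theta_t\omega}_{t+u}$, and letting $u\to\infty$ --- using continuity of $\varphi(t,\cdot\,,\omega)$ on the compact sets involved and the independence of the pull-back $\omega$-limit set from the chosen absorbing ball --- yields the identity.

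The step I expect to be the main obstacle is the first one in (ii) $\Rightarrow$ (i): condition (ii) is a purely probabilistic statement made separately for each radius $r$, with an entry time $s$ that may depend on the initial point $x$, so turning it into a genuine random \emph{compact} absorbing ball valid simultaneously for all bounded sets --- equivalently, showing that the absorbing radii can a.s.\ be chosen bounded in $r$ and the absorption made uniform in $x$ --- is where the argument really has to be run carefully. By contrast, (i) $\Rightarrow$ (ii) and the verification of the axioms above are the by-now-routine $\omega$-limit-set arguments of \cite{CF}, the only mildly technical points being the measurability of $A(\omega)$ and the passage $u\to\infty$ in the invariance identity.
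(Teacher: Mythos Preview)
The paper does not prove this proposition; it is quoted from \cite{CDS} and \cite[Proposition~2.3]{DS} and used as a black box, so there is no in-paper argument to compare your sketch against.

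Your direction (i)$\Rightarrow$(ii) is fine. For (ii)$\Rightarrow$(i) you have correctly located the crux, but your sketch does not actually resolve it: condition~(ii) only says that each \emph{individual} point $x\in B_r$ is eventually trapped in $B_R$ under the pull-back, with an entry time that may depend on $x$, whereas your absorbing-ball construction needs \emph{uniform} absorption of $B_r$. The phrase ``convert this to absorption that is uniform in the initial point \dots\ by a monotonicity/diagonal argument'' is exactly the missing content --- nothing in the abstract hypothesis~(ii) forces the pointwise entry times to be bounded on $B_r$, and continuity of the flow alone does not give this. The route in \cite{CDS} is different from yours: condition~(ii) is read as a tightness criterion for the one-point pull-back motions, which yields a \emph{weak} random attractor (attraction in probability rather than almost surely); one then invokes a general result from \cite{CDS} that for continuous cocycles on Polish spaces a weak attractor is automatically a pullback attractor. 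So your overall plan of ``build a random compact absorbing set, then run the $\omega$-limit construction of \cite{CF}'' is not the argument behind the cited result, and as written it has a genuine gap at the pointwise-to-uniform step.
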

\subsection{Main results}
Based on general estimates on the speed of dispersion of random sets in  \cref{section3} (cf. \cref{asy-general})  and on quantitative estimates of the solution to singular SDE in  \cref{section4}, we will show the following result in   \cref{section5}.
\begin{theorem}
If \cref{Ass} holds, then there exists a constant $\kappa>0$ 
such that for the flow $\psi$ generated by the solution to  \eqref{sde0} we have, for any compact    $\mathcal{X}\subset \mathbb{R}^d$,  
 \begin{align*}
      \limsup_{T\rightarrow\infty}\Big(\sup_{t\in[0,T]}\sup_{x\in\mathcal{X}}\frac{1}{T}|\psi_{t}(x)|\Big)\leq \kappa \quad a.s..
\end{align*}
\end{theorem}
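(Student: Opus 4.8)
The plan is to deduce this linear expansion bound from the two ingredients advertised in the introduction: the general chaining estimate on the dispersion of random sets (\cref{asy-general} in \cref{section3}) and the quantitative stability/Krylov-type estimates for the singular SDE (\cref{section4} and \cref{Transformed-two-point}). The key point is that the quantity $\sup_{t\in[0,T]}\sup_{x\in\mathcal X}|\psi_t(x)|$ controls both the displacement from the initial set $\mathcal X$ \emph{and} the diameter of $\psi_t(\mathcal X)$, so it suffices to bound the growth of $|\psi_t(x_0)|$ for a single point $x_0$ together with the expansion rate of the diameter of the image of the bounded set $\mathcal X$.

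First I would reduce to a single trajectory. For fixed $x_0\in\mathcal X$ one has
\[
\sup_{x\in\mathcal X}|\psi_t(x)|\le |\psi_t(x_0)| + \operatorname{diam}\big(\psi_t(\mathcal X)\big),
\]
so it is enough to show that $\limsup_{T\to\infty}\frac1T\sup_{t\le T}|\psi_t(x_0)|$ and $\limsup_{T\to\infty}\frac1T\sup_{t\le T}\operatorname{diam}(\psi_t(\mathcal X))$ are each bounded by a deterministic constant almost surely. The second term is exactly what the chaining machinery of \cref{section3} delivers once the appropriate moment bounds on two-point motions are available: these are precisely the stability estimates of \cref{Transformed-two-point}, whose constants are now explicit, so the argument of \cite{MS} adapts verbatim (this is the content of the general result \cref{asy-general}). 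For the first term, one applies the Zvonkin transformation $\Phi$ associated to \eqref{sde0}: under \cref{Ass}, $\Phi$ and $\Phi^{-1}$ are bi-Lipschitz and close to the identity, and $Y_t:=\Phi(\psi_t(x_0))$ solves an SDE with bounded, Lipschitz-like drift and diffusion. Then $|Y_t|\le |Y_0| + \int_0^t |\tilde b(Y_s)|\,ds + |\int_0^t \tilde\sigma(Y_s)\,dW_s|$; the drift integral grows at most linearly in $t$ since $\tilde b$ is bounded, and the martingale term has sublinear growth almost surely by the law of the iterated logarithm together with the boundedness of $\tilde\sigma\tilde\sigma^*$ (alternatively, a Borel–Cantelli argument over the intervals $[n,n+1]$ using $L^p$-maximal estimates). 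Dividing by $T$ and letting $T\to\infty$ kills the martingale and initial-condition contributions and leaves a constant coming from $\|\tilde b\|_\infty$; transferring back through the bi-Lipschitz $\Phi^{-1}$ preserves the linear-in-$T$ bound.

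The constant $\kappa$ is then assembled from the Lipschitz constants of $\Phi,\Phi^{-1}$, the sup-norm of the transformed drift, the ellipticity bounds $K_1,K_2$, and the chaining constant from \cref{asy-general}; because \cref{Transformed-two-point} tracks the dependence of all constants on the coefficients explicitly, $\kappa$ is genuinely deterministic and depends only on the data in \cref{Ass}.

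The main obstacle I anticipate is making the "single trajectory" bound uniform in $t\in[0,T]$ and simultaneously quantitative in a way compatible with the chaining step: one must control $\sup_{t\le T}|\psi_t(x_0)|$, not just $|\psi_T(x_0)|$, which forces the use of a maximal (Doob/BDG) inequality for the transformed martingale part and a careful Borel–Cantelli argument to pass from moment bounds on dyadic or unit time intervals to an almost-sure linear bound along the full time axis. The interplay between the (random, but tame) Zvonkin change of variables and these maximal estimates — in particular verifying that the exceptional null set can be chosen uniformly over the compact set $\mathcal X$ — is the technical heart of the argument, and is where the quantitative Krylov and Khasminskii estimates of \cref{section4} are really needed.
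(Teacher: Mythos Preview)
Your decomposition
\[
\sup_{x\in\mathcal X}|\psi_t(x)|\le |\psi_t(x_0)|+\operatorname{diam}\big(\psi_t(\mathcal X)\big)
\]
is not wrong, but it does not buy you anything, and the way you propose to handle the second term contains a genuine gap. You write that the diameter is ``exactly what the chaining machinery of \cref{section3} delivers once the appropriate moment bounds on two-point motions are available'' and cite \cref{asy-general}. But \cref{asy-general} does not bound a diameter, and it does \emph{not} run on the two-point estimate alone: its hypotheses are the two-point condition \eqref{H} \emph{and} the one-point exponential tail \eqref{abschae}, and its conclusion is already the full statement $\limsup_T T^{-1}\sup_{t\le T}\sup_{x\in\mathcal X}|\psi_t(x)|\le\kappa$. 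The two-point estimate by itself only controls the image of cubes whose side length \emph{shrinks} like $e^{-\gamma T}$ (this is \cref{2point}); for a fixed bounded set it gives $\mE\operatorname{diam}(\psi_T(\mathcal X))^r\lesssim e^{c_1 r^\alpha T}$, and Markov's inequality then yields $T^{-1}\log\mP(\operatorname{diam}(\psi_T(\mathcal X))>kT)\to c_1 r^\alpha>0$, which is useless. In the chaining proof one covers $\mathcal X$ by $\sim e^{\gamma T\Delta}$ shrinking cubes and must separately control the displacement of each of the $e^{\gamma T\Delta}$ centers; that is precisely where \eqref{abschae} enters.

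Your single-trajectory argument via Zvonkin, bounded transformed drift, and LIL for the martingale part is correct for one fixed $x_0$, but it produces an almost-sure statement rather than the uniform exponential bound \eqref{abschae} over a bounded set that the chaining needs. The paper instead verifies \eqref{abschae} by Girsanov: it removes all of $b$ (not just the singular part), so that the remaining process is a pure stochastic integral with bounded quadratic variation, and then a time-change plus Markov's inequality give the Gaussian tail $-c_2k^2+c_3$. Once \eqref{abschae} is in hand, together with the two-point estimate coming from \cref{Transformed-two-point} after the Zvonkin transform, \cref{asy-general} applies directly and the decomposition above is superfluous. Your Zvonkin route for the one-point motion could also be pushed to give \eqref{abschae} (use an exponential martingale inequality on the transformed martingale part rather than LIL), but as written the proposal stops short of this and the diameter step, as argued, cannot absorb the slack.
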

\noindent The precise statement including a formula for $\kappa$ will be given in  \cref{2-point-sde}. There, we can see that $\kappa \to \infty$ as $K_1 \to 0$ (when all other parameters remain unchanged). The following example explains this fact: as the noise becomes more and more degenerate, the linear bound on the dispersion of a bounded set under the flow approaches infinity, so our non-degeneracy assumption on the noise cannot be avoided.
\begin{example}\label{exa:no-linear-growth}
In $\mathbb{R}^2$, for $\epsilon>0$, we consider the  system
\begin{align}\label{eq:system}
    \left\{\begin{array}{rcl}
         \dd X_t&=B(Y_t)\,\dd t+\epsilon\, \dd W_t^1,\quad  X_0\in\mathbb{R},\quad\quad\quad\quad\quad \\
         \dd Y_t&=[((-Y_t)\vee(-1))\wedge1]\,\dd t+\epsilon \,\dd W_t^2,\quad  Y_0\in\mathbb{R},
    \end{array}\right.
\end{align}
where \begin{align*}
   B(y):= \left\{\begin{array}{cc}
        |y|^{-q} &\text{ if } y\neq0,  \\
        0 &  \text{else},
    \end{array}\right.\quad q\in(0,\frac{1}{4}).
\end{align*}
and $W^1, W^2$ are two independent $1$-dimensional Brownian motions.  Notice that for $b(x,y):=(B(y),((-y)\vee(-1))\wedge1)^*$, we have $b\in\tilde L_p(\mathbb{R}^2)$ for $p\in(4,\frac{1}{q})$. Clearly there exists a unique solution $(X,Y)$ to \eqref{eq:system} and
\begin{align*}
    X_t=X_0+\int_0^tB(Y_s)\dd s+W_t^1,\quad t\geq0.
\end{align*}
By the ergodic theorem, almost surely,
\begin{align*}
    \lim_{t\rightarrow\infty}\frac{1}{t}\int_0^tB(Y_s)\,\dd s=\int_{-\infty}^{\infty}B(y)\pi_\epsilon(\dd y),
\end{align*}
where $\pi_\epsilon$ is the invariant probability measure of $Y$. Since $\pi_\epsilon$ converges to the point measure $\delta_0$ weakly as $\epsilon\downarrow0$, we see that the linear expansion rate of $(X,Y)$ converges to $\infty$ when  $\epsilon\downarrow0$. In particular,  we can not expect to have a linear expansion rate for the solution to a singular SDE with degenerate noise in general.
\end{example}

We will now assume that the singular drift $b$ in \eqref{sde0} is of the form  $b=b_1+b_2$ with $b_1\in \tilde L_p(\mathbb{R}^d)$ and $b_2$ satisfies one of the following conditions.
\begin{assumption}\label{Ass-onepoint}
For a given $\beta\in\mR$,  $b_2(x):\mR^d\rightarrow\mR^d$  satisfies
 \begin{itemize}
     \item[($U^\beta$)] $\quad\quad\quad\quad\quad\quad\quad\quad \limsup_{|x|\rightarrow\infty}\frac{x}{|x|}\cdot b_2(x)\leq \beta$
      \end{itemize}
      
     or
     
     \begin{itemize}
      \item[($U_\beta$)] $\quad\quad\quad\quad\quad\quad\quad\quad\liminf_{|x|\rightarrow\infty}\frac{x}{|x|}\cdot b_2(x)\geq \beta$.
 \end{itemize}
\end{assumption}

\begin{theorem}
  Let \cref{Ass} hold. If there exist vector fields $b_1$ and $ b_2$ such that $b=b_1+b_2$ with $b_1\in \tilde L_p(\mR^d)$. There exist positive constants $\beta_1$ (see \cref{non-attractor}) and $\beta_2$  (see \cref{exist-Random-attractor}) such that for the flow $(\psi_t(x))_{t\geq0}$  generated by the solution to  \eqref{sde0}  
  \begin{itemize}
      \item[1.]  if $b_2$ satisfies \cref{Ass-onepoint} $(U_\beta)$ for $\beta>\beta_1$,  then for any $\gamma\in[0,\beta-\beta_1)$ we have
   \begin{align}\label{pro-lowerbound-main-result}
   \lim_{r\rightarrow\infty}\mP\Big(B_{\gamma t}\subset\psi_t(B_r)\quad\forall \quad t\geq0\Big)=1.
   \end{align}
      \item[2.] if $b_2$ satisfies \cref{Ass-onepoint} $(U^\beta)$ for $\beta<-\beta_2,$
   then for any $\gamma\in[0,-\beta-\beta_2)$ we have
   \begin{align}\label{pro-upperbound-main-result}
   \lim_{r\rightarrow\infty}\mP\Big(B_{\gamma t}\subset\psi_{-t,0}^{-1}(B_r)\quad\forall \quad t\geq0\Big)=1.
   \end{align}
   In particular, $\psi$ has a random attractor.
  \end{itemize}
\end{theorem}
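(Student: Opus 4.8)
The plan is to treat the two assertions symmetrically via the Zvonkin transformation and the general dispersion/attraction machinery developed earlier in the paper. The key observation is that, after a Zvonkin transform $\Phi$ adapted to the singular part $b_1$ (together with the diffusion $\sigma$), the SDE \eqref{sde0} is converted into a new SDE whose drift is Lipschitz-like and whose \emph{radial component} still inherits the sign condition $(U_\beta)$ (resp.\ $(U^\beta)$) from $b_2$, but with the threshold shifted by a constant $\beta_1$ (resp.\ $\beta_2$) that is controlled by the localized $L_p$-norm of $b_1$, the ellipticity constants $K_1,K_2$, and the bound on $\nabla\sigma$. The constants coming out of the elliptic regularity estimates of Appendix~\ref{A} and the quantitative Krylov/Khasminskii estimates of \cref{section4} are exactly what produce explicit values of $\beta_1,\beta_2$ there (referenced as \cref{non-attractor} and \cref{exist-Random-attractor}). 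Since $\Phi$ and $\Phi^{-1}$ are bi-Lipschitz with constants close to $1$ (they can be made so by choosing the regularization parameter large), containments of balls $B_{\gamma t}\subset\psi_t(B_r)$ transfer, up to adjusting $\gamma$ and $r$ by bounded factors, to the corresponding containments for the transformed flow.

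For part~1, I would fix $\beta>\beta_1$ and $\gamma\in[0,\beta-\beta_1)$, and work with the transformed flow $\tilde\psi$. One first shows a \emph{one-point lower bound}: for each fixed $x$ with $|x|$ large, $|\tilde\psi_t(x)|\ge (\beta-\beta_1)t$ up to a subexponential (in fact, diffusive) error, by applying It\^o's formula to $|\tilde\psi_t(x)|$ (or a smoothed version near the origin) and using the radial sign condition $(U_\beta)$ on the transformed drift together with the bounded quadratic-variation contribution $K_2 d$; the martingale part is controlled by a maximal inequality / Borel--Cantelli argument. To upgrade this to the uniform-in-direction statement $B_{\gamma t}\subset\tilde\psi_t(B_r)$ one uses that $\tilde\psi_t$ is a homeomorphism of $\R^d$ (from the flow property, injectivity plus properness), so it suffices to control the image of the \emph{sphere} $\partial B_r$: if every point of $\tilde\psi_t(\partial B_r)$ has norm $\ge\gamma t$ and the image of $B_r$ contains the origin at time $0$, a degree/connectedness argument forces $B_{\gamma t}\subset\tilde\psi_t(B_r)$. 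The uniformity over the sphere is obtained from the chaining estimates of \cref{section3} (cf.\ \cref{asy-general}), which give an a.s.\ linear-in-$t$ modulus of continuity for $x\mapsto\tilde\psi_t(x)$ on compacts, hence only finitely many ``test points'' on $\partial B_r$ are needed. Letting $r\to\infty$ makes the exceptional probability vanish, giving \eqref{pro-lowerbound-main-result}.

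For part~2, the drift condition $(U^\beta)$ with $\beta<-\beta_2$ gives a radial component pointing \emph{inward}, so the natural object is the inverse flow. I would either run the same computation for the time-reversed / inverse flow $\psi_{-t,0}^{-1}$ — whose generator has drift with the opposite sign, so that $(U^\beta)$ becomes an outward condition for the inverse and the part~1 argument applies verbatim — or, equivalently, show directly that the image of a huge ball $B_R$ under $\psi_t$ is swallowed by a fixed ball after a long deterministic time with high probability, which is exactly hypothesis (ii) of \cref{flow-attractor}. Concretely: for $|x|$ large the transformed radial drift is $\le -(\,|\beta|-\beta_2)<0$, so $|\tilde\psi_t(x)|$ decreases at linear rate until it reaches a fixed level, uniformly over $x\in\partial B_R$ by the same chaining argument; translating back through $\Phi$ and using the cocycle property of $\varphi$ gives $B_r\subset\bigcup_s\bigcap_{t\ge s}\varphi^{-1}(t,B_{R'},\theta_{-t}\omega)$ with probability $\to1$ as $R'\to\infty$. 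Then \cref{flow-attractor} yields the random attractor. The containment \eqref{pro-upperbound-main-result} is the quantitative form of this inward drift statement and is proved along with it.

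The main obstacle, as in the linear-expansion result, is bookkeeping the dependence of all constants on the coefficients through the Zvonkin transformation: one must verify that the elliptic regularity estimates of Appendix~\ref{A} give $\|\nabla\Phi-\mathrm{Id}\|$ and $\|\nabla\Phi^{-1}-\mathrm{Id}\|$ small (so the bi-Lipschitz distortion does not eat up the gap $\beta-\beta_1$), and that the extra drift term generated by the transformation, $\tfrac12\, a^{ij}\partial_i\partial_j\Phi$ plus the push-forward of $b_1$, is genuinely bounded (not merely $\tilde L_p$) with a bound that is exactly $\beta_1$ in terms of $\|b_1\|_{\tilde L_p}$, $K_1$, $K_2$, $\|\nabla\sigma\|_{\tilde L_\oops}$, $d$, $p$. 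This is where the quantitative Krylov estimate of \cref{section4} enters. Once that is in place, the probabilistic core — It\^o's formula on $|\cdot|$, a maximal inequality, chaining on the sphere, and either a degree argument or \cref{flow-attractor} — is routine.
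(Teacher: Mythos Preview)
Your plan departs from the paper in a way that produces a real gap. The paper does \emph{not} remove the singular part $b_1$ by a Zvonkin transform in order to obtain the one-point radial estimates; it removes $b_1$ by Girsanov. Concretely, \cref{lemm-mul} compares $\psi$ with the flow $\psi^2$ of $\dd\psi^2_t=b_2(\psi^2_t)\,\dd t+\sigma(\psi^2_t)\,\dd W_t$ via the density $\rho_T=\exp\big(\int_0^T b_1^*(\sigma^{-1})^*\,\dd W-\tfrac12\int_0^T b_1^*a^{-1}b_1\,\dd r\big)$, whose moments are controlled by the quantitative Khasminskii bound (\cref{est-exp}); the needed one-point estimates for $\psi^2$ under $(U_\beta)$ or $(U^\beta)$ are then quoted from \cite{DS}. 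Zvonkin enters the paper only for the \emph{two-point} dispersion estimate (\cref{two-point-prop}), and there it is applied to the full drift $b$, not to $b_1$ separately. Parts~6 and~7 of \cref{lemm-mul} then combine these one- and two-point bounds through a covering of $\partial B_R$, and the theorems follow by Borel--Cantelli and \cref{flow-attractor}.

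The step in your outline that breaks is the assertion that after Zvonkin with $\Phi=\mathrm{id}+U$ adapted to $b_1$, the transformed drift ``inherits the sign condition $(U_\beta)$ from $b_2$ with the threshold shifted by a constant''. The transformed drift is $\lambda U\circ\Psi+(\mathrm{I}+\nabla U)\,b_2\circ\Psi$; the first summand is bounded, but the radial component of the second at $y=\Phi(x)$ differs from $\tfrac{x}{|x|}\cdot b_2(x)$ by error terms of size $\|\nabla U\|_\infty\,|b_2(x)|$ and $|b_2(x)|/|x|$. Neither $(U_\beta)$ nor $b_2\in\tilde L_p$ gives any pointwise control on $|b_2|$ (the tangential part of $b_2$ may be unbounded), so these errors are not constants and the shifted radial condition need not hold for the transformed flow. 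To handle them along trajectories you would be forced to invoke a Krylov--Khasminskii bound on $\int_0^T \|\nabla U\|_\infty|b_2(X_r)|\,\dd r$, which is exactly the mechanism the Girsanov route already packages more cleanly. A second loose point: the inverse flow $\psi_{-t,0}^{-1}$ does not simply solve the SDE with sign-reversed drift, so your time-reversal shortcut for part~2 is not available; the paper works with the forward flow on intervals $[t,T]$, uses cases~1--3 and~7 of \cref{lemm-mul}, and concludes via \cref{flow-attractor}.
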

Correspondingly the detailed results are presented in \cref{non-attractor} and \cref{exist-Random-attractor}.\\

In the end we give the following example on the special case that the drift is bounded ( i.e. $p=\infty$ ) to conclude the results on the expansion rate and attractors.
\begin{example}
[A case study: bounded coefficients] We consider the flow $(\psi_t(x))_{t\geq0}$ generated by the solution to \eqref{sde0} when $b$, $\nabla \sigma$ are simply bounded, i.e., \cref{Ass} holds with arbitrary $p=\rho\in(1,\infty)$. 
\begin{itemize}
    \item[1.] Expansion rate of the flow: \cref{2-point-sde} shows that for each $\epsilon>0$ there exist constants $C_1$  (depending on $d$ and $\epsilon$) such that for each compact subset $\mathcal{X}\subset\mathbb{R}^d$  
    \begin{align}\label{eq:uppbound-bounded-b}
         \limsup_{T\rightarrow\infty}&\Big(\sup_{t\in[0,T]}\sup_{x\in\mathcal{X}}\frac{1}{T}|\psi_t(x)|\Big)\leq C_1\Big(K_2+\Vert b\Vert_{\infty}^2\frac{ K_2}{K_1^2}+{\Vert \nabla \sigma\Vert_{\infty}^2}\Big)\nonumber\\&\quad\quad\quad\quad\quad\quad\quad\quad\quad\quad\quad\Big[\Big(\frac{K_2}{K_1}\Big)^{16d^3+\epsilon}+\Big(\frac{\Vert \nabla \sigma\Vert_{\infty}^2}{K_1}\Big)^{32d^3+\epsilon}+\Big(\frac{\Vert b\Vert_{\tilde L_p}}{K_1}\Big)^{32d^2+\epsilon}\Big]. 
    \end{align}
    \item[2.] Existence of the attractor:   if $b=b_1+b_2$ with $b_1$ bounded and $b_2$ satisfying $(U^\beta)$ in \cref{Ass-onepoint} and $$\beta<-C_2\frac{(\Vert b_1\Vert_{\infty}^2+K_2\Vert b_1\Vert_{\infty})}{{\sqrt{K_1K_2}}}\Big[\Big(\frac{K_2}{K_1}\Big)^{4d^2+\epsilon}+\Big(\frac{\Vert \nabla \sigma\Vert_{\infty}^2}{K_1}\Big)^{4d^2+\epsilon}+\Big(\frac{\Vert b_2\Vert_{\infty}}{K_1}\Big)^{4d+\epsilon}\Big],$$
   where $\epsilon>0$  and  $C_2>0$ is an appropriate function depending on $d$ and $\epsilon$ only, then from \cref{exist-Random-attractor} we know that $\psi$ has an attractor.
\end{itemize}
\end{example}

 \section{Expansion of  sets under a flow}\label{section3}
In this section, we assume that $\psi:[0,\infty)\times \R^d \times \Omega \to \R^d$ is measurable
such that $t \mapsto \psi_t(x,\omega)$ is continuous for every $x \in \R^d$ and $\omega \in \Omega$ (we do not require that $\psi$ has any kind of flow property).
\begin{lemma}\label{2point}
Assume that there exists $\alpha >0$ and a  constant $ c_1>0$  such that for each $r>d$, there exists $c=c(r)>0$  such that for all $x,y\in\mR^d$ and $T>0$, we have
\begin{align}\label{H}
    \Big(\mE\sup_{0\leq t\leq T}(|\psi_t(x)-\psi_t(y)|^r)\Big)^{1/r}\leq c|x-y|\e^{c_1r^\alpha T}.
\end{align}
Then $\psi$ has a modification (which we denote by the same symbol) which is jointly continuous in $(t,x)$ and for each $\gamma>0$ and $u>0$,
\begin{align}\label{Asy}
\limsup_{T\rightarrow\infty}\frac{1}{T}\sup_{\chi_{T,\gamma}}\log\mP\Big(\sup_{x,y\in\chi_{T,\gamma}}\sup_{0\leq t\leq T}|\psi_t(x)-\psi_t(y)|\geq u\Big)\leq -I(\gamma),
\end{align}
where $\sup_{\chi_{T,\gamma}}$ means that we take the supremum over all cubes ${\chi_{T,\gamma}}$ in $\mR^d$ with side length $\e^{-\gamma T}$, and $I:[0,\infty) \to \R$ is defined as
\begin{align}\begin{aligned}
    \label{I}
         I(\gamma):=\left\{\begin{array}{rcl} 
         \gamma^{1+1/\alpha} \alpha (1+\alpha)^{-1-1/\alpha}c_1^{-1/\alpha} \quad\quad\quad \text{     if } &\gamma\geq  c_1(\alpha+1)d^\alpha\\
         d(\gamma-c_1d^\alpha) \quad\quad\quad \text{    if }& c_1d^\alpha<\gamma\leq  c_1(\alpha+1)d^\alpha \\
     0  \quad\quad\quad   \text{     if } &\gamma\leq  c_1d^\alpha. 
    \end{array}\right.
\end{aligned}\end{align}
\end{lemma}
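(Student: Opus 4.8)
The plan is to combine the Kolmogorov continuity theorem with a chaining (dyadic subdivision) argument inside each cube, following \cite{MS}. First I would produce the jointly continuous modification: viewing $x\mapsto\psi_\cdot(x)$ as a random field with values in $C([0,T];\R^d)$, hypothesis \eqref{H} says $\mE\|\psi_\cdot(x)-\psi_\cdot(y)\|_{C([0,T])}^r\le(c\,\e^{c_1r^\alpha T})^r|x-y|^r$, and since $r>d$ is admissible the Kolmogorov continuity theorem gives, for every $T\in\mN$, a modification continuous in $x$ uniformly on $[0,T]$; these can be taken consistent in $T$ and, combined with the assumed continuity in $t$, yield a single modification, still called $\psi$, jointly continuous in $(t,x)$ on $[0,\infty)\times\R^d$, which still satisfies \eqref{H}.

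Next comes the chaining estimate on a fixed cube. Fix $u>0$, $\gamma>0$, $T>0$ and a cube $\chi$ of side length $\ell:=\e^{-\gamma T}$ with a distinguished corner $z_0$; since $\sup_{x,y\in\chi}\sup_{0\le t\le T}|\psi_t(x)-\psi_t(y)|\le 2\sup_{x\in\chi}\sup_{0\le t\le T}|\psi_t(x)-\psi_t(z_0)|$ it suffices to bound the probability that the latter is $\ge u/2$. Let $D_n\subset\chi$ be the grid of mesh $\ell 2^{-n}$ (so $|D_n|\le(2^n+1)^d$) and, for $x\in\chi$, pick $x_n\in D_n$ with $x_0=z_0$, $x_n\to x$ and $|x_{n+1}-x_n|\le C_d\,\ell\,2^{-n}$; by continuity $\psi_t(x)-\psi_t(z_0)=\sum_{n\ge0}(\psi_t(x_{n+1})-\psi_t(x_n))$, so with $M_n$ the maximum of $\sup_{0\le t\le T}|\psi_t(v)-\psi_t(w)|$ over the at most $C_d'2^{nd}$ ``parent--child'' pairs $(v,w)$ at level $n$ one gets $\sup_{x\in\chi}\sup_{0\le t\le T}|\psi_t(x)-\psi_t(z_0)|\le\sum_{n\ge0}M_n$. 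Fix $r>d$, pick $\epsilon\in(0,1-d/r)$ and weights $a_n:=(1-2^{-\epsilon})2^{-n\epsilon}$ (so $\sum_n a_n=1$); Markov's inequality with \eqref{H} at each pair, a union bound over the $\le C_d'2^{nd}$ pairs at level $n$, and summation of the geometric series in $n$ (convergent because $d-r(1-\epsilon)<0$) produce a constant $C=C(d,r,\epsilon,u)$, independent of $T$ and of the chosen cube, with
\[
\mP\Big(\sup_{x,y\in\chi}\sup_{0\le t\le T}|\psi_t(x)-\psi_t(y)|\ge u\Big)\ \le\ C\,\ell^{\,r}\,\e^{c_1r^{\alpha+1}T}\ =\ C\,\e^{(c_1r^{\alpha+1}-r\gamma)T}.
\]

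Because the right-hand side does not depend on the cube, it also bounds $\sup_{\chi_{T,\gamma}}\mP(\cdots)$; taking $\tfrac1T\log$ (which commutes with $\sup_{\chi_{T,\gamma}}$) and letting $T\to\infty$ gives $\limsup_{T\to\infty}\tfrac1T\sup_{\chi_{T,\gamma}}\log\mP(\cdots)\le c_1r^{\alpha+1}-r\gamma$ for every $r>d$, and combining with the trivial bound $\le0$ yields $-I(\gamma)$ with $I(\gamma)=\max\{0,\ \sup_{r>d}(r\gamma-c_1r^{\alpha+1})\}$. It then remains to identify this with \eqref{I}: the strictly concave map $r\mapsto r\gamma-c_1r^{\alpha+1}$ has unconstrained maximizer $r^\ast=(\gamma/(c_1(1+\alpha)))^{1/\alpha}$, and if $r^\ast>d$, i.e.\ $\gamma>c_1(1+\alpha)d^\alpha$, the supremum over $r>d$ equals $\gamma^{1+1/\alpha}\alpha(1+\alpha)^{-1-1/\alpha}c_1^{-1/\alpha}$, while if $r^\ast\le d$ the map is decreasing on $(d,\infty)$ so the supremum is $d(\gamma-c_1d^\alpha)$, which is positive precisely for $c_1d^\alpha<\gamma\le c_1(1+\alpha)d^\alpha$ and nonpositive (hence $I(\gamma)=0$) for $\gamma\le c_1d^\alpha$ --- exactly the three cases in \eqref{I}.

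The genuinely routine parts are the Kolmogorov theorem and the one-variable optimization; the step that needs care is the chaining bookkeeping in the second paragraph: one must check that the cardinality $\lesssim 2^{nd}$ of level-$n$ pairs, the mesh factors $2^{-nr}$ coming from \eqref{H}, and the factors $2^{nr\epsilon}$ coming from $a_n^{-r}$ combine into a sum that is both finite and free of any hidden dependence on $T$ (so that $\tfrac1T\log C\to0$), and that a single such constant serves simultaneously for all cubes of side $\e^{-\gamma T}$, which is precisely what lets the supremum over cubes be pulled outside without cost.
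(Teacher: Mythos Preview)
Your proof is correct and follows essentially the same approach as the paper: the paper rescales the cube to $[0,1]^d$ and invokes a packaged Kolmogorov--chaining lemma from \cite{MS} to obtain the same bound $\mP(\cdots)\le \tilde c\, \e^{(c_1r^\alpha-\gamma)rT}u^{-r}$, then optimizes over $r>d$, whereas you spell out the dyadic chaining and the optimization explicitly. The only cosmetic difference is that you carry the trivial bound $\le 0$ alongside to recover the $I(\gamma)=0$ case, which the paper absorbs into the stated piecewise formula.
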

\begin{proof}
We follow the argument in \cite[Proof of Theorem 3.1]{MS}. Without loss of generality we take $\chi:=\chi_{T,\gamma}=[0,\e^{-\gamma T}]^d$ and define $Z_t(x):=\phi_t(\e^{-\gamma T}x)$, $x\in\mR^d$. From \eqref{H} we get
\begin{align*}
    \Big(\mE\sup_{0\leq t\leq T}(|Z_t(x)-Z_t(y)|^r)\Big)^{1/r}\leq c e^{-\gamma T}|x-y|\e^{c_1r^\alpha T}.
\end{align*}
By Kolmogorov's Theorem (see, e.g. \cite[Lemma 2.1]{MS}), $\phi$ admits a jointly continuous modification and for any $\rho\in(0,\frac{r-d}{r})$:
\begin{align}\label{two-point-est-process}
    \mP\Big(\sup_{x,y\in\chi_{T,\gamma}}\sup_{0\leq t\leq T}|\psi_t(x)-\psi_t(y)|\geq u\Big)\leq 
    \tilde c \e^{(c_1r^\alpha-\gamma)rT}u^{-r},
\end{align}
where $\tilde c$ depends on $r,d,\rho$ only. Taking logarithms, dividing by $T$, then letting $T\rightarrow\infty$ and optimizing over $r>d$ we get the desired result \eqref{Asy}.
\end{proof}
\begin{remark}\label{convex}
Since $I(\gamma)=\sup_{r>d} \big\{r\big(\gamma-c_1r^\alpha\big)\big\}$ is the supremum of affine functions, the map $\gamma \mapsto I(\gamma)$ is convex. Further, $I$ grows faster than linearly.
\end{remark}

The following theorem is a reformulation of \cite[Theorem 2.3]{MS}. 
\begin{theorem}
\label{asy-general}
Let $\psi:[0,\infty)\times \R^d\times \Omega \to \R^d$ be jointly continuous and satisfy the assumptions of  \cref{2point} and \eqref{H} hold with constants $c_1$ and $\alpha$. Assume further, that there exist $c_2 $ and $c_3\geq0$  such that, for each $k>0$ and each bounded set $S\subset\mR^d$, the following holds
    \begin{align}\label{abschae}
        \limsup_{T\rightarrow\infty}\frac{1}{T}\log\sup_{x\in S}\mP\Big(\sup_{0\leq t\leq T}|\psi_t(x)|\geq kT\Big)\leq -c_2k^2+c_3.
    \end{align}
Let $\mathcal{X}$ be a compact subset of $\mR^d$ with box (or upper entropy) dimension $\Delta>0$. Then
\begin{align}
    \label{Upper-bounds-dispersion-sets}
   \limsup_{T\rightarrow\infty}\Big(\sup_{t\in[0,T]}\sup_{x\in\mathcal{X}}\frac{1}{T}|\psi_t(x)|\Big)\leq \kappa \quad a.s.,
\end{align}
where
\begin{equation*}
    \kappa:=\left\{\begin{array}{rcl} \Big(\frac{c_3+\gamma_1\Delta}{c_2}\Big)^{\frac{1}{2}}&\text{ if } \frac{d}{d-\Delta}<\alpha+1,\\
    \Big(\frac{c_3+\gamma_2\Delta}{c_2}\Big)^{\frac{1}{2}}&\text{otherwise},\end{array}\right. \quad\text{ with  } \quad\gamma_1=\frac{c_1d^{\alpha +1}}{d-\Delta},\quad   \gamma_2=c_1(\alpha^{-1}\Delta)^\alpha(1+\alpha)^{1+\alpha}. 
\end{equation*}

\end{theorem}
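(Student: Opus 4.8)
The strategy is to combine the two-point large-deviation bound \eqref{Asy} (a consequence of \cref{2point}) with the one-point bound \eqref{abschae} through a covering argument at scale $\e^{-\gamma T}$, and then to optimize over $\gamma$. First I would reduce to integer times: for $T\in[n,n+1]$ one has $\sup_{t\in[0,T]}\sup_{x\in\mathcal{X}}\tfrac1T|\psi_t(x)|\le\tfrac{n+1}{n}\sup_{t\in[0,n+1]}\sup_{x\in\mathcal{X}}\tfrac1{n+1}|\psi_t(x)|$, so it suffices to prove $\limsup_{n\to\infty}\tfrac1n\sup_{t\in[0,n]}\sup_{x\in\mathcal{X}}|\psi_t(x)|\le\kappa$ a.s.\ along $n\in\mathbb N$. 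Fix $\gamma>0$ and $u>0$; for each $n$ cover $\mathcal{X}$ by cubes $\chi^{(1)}_n,\dots,\chi^{(N_n)}_n$ of side length $\e^{-\gamma n}$, each meeting $\mathcal{X}$, and pick $x_{i,n}\in\chi^{(i)}_n\cap\mathcal{X}$. By the definition of the upper box dimension, for every $\epsilon'>0$ we have $N_n\le\e^{(\Delta+\epsilon')\gamma n}$ for all $n$ large.

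Next I would control two bad events by union bounds. Applying \eqref{abschae} with $S=\mathcal{X}$ gives, for every $\epsilon''>0$ and $n$ large, $\sup_{x\in\mathcal{X}}\mP\big(\sup_{0\le t\le n}|\psi_t(x)|\ge kn\big)\le\e^{(-c_2k^2+c_3+\epsilon'')n}$, hence the probability that $\sup_{0\le t\le n}|\psi_t(x_{i,n})|\ge kn$ for some $i$ is at most $\e^{((\Delta+\epsilon')\gamma-c_2k^2+c_3+\epsilon'')n}$. Applying \eqref{Asy}, which is uniform over cubes of side $\e^{-\gamma n}$, gives for $n$ large that the probability that $\sup_{x,y\in\chi^{(i)}_n}\sup_{0\le t\le n}|\psi_t(x)-\psi_t(y)|\ge u$ for some $i$ is at most $\e^{((\Delta+\epsilon')\gamma-I(\gamma)+\epsilon'')n}$. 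Off both bad events, every $x\in\mathcal{X}$ lies in some $\chi^{(i)}_n$, so $\sup_{0\le t\le n}|\psi_t(x)|\le kn+u$ and therefore $\tfrac1n\sup_{t\in[0,n]}\sup_{x\in\mathcal{X}}|\psi_t(x)|\le k+u/n$.

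Now I would invoke Borel--Cantelli. If $\gamma$ is chosen so that $I(\gamma)>\gamma\Delta$ and $k$ so that $c_2k^2>c_3+\gamma\Delta$, then for $\epsilon',\epsilon''$ small enough both exponents above are strictly negative, the bad-event probabilities are summable in $n$, and hence almost surely $\limsup_{n\to\infty}\tfrac1n\sup_{t\in[0,n]}\sup_{x\in\mathcal{X}}|\psi_t(x)|\le k$. Since $(c_3+\gamma\Delta)/c_2$ is increasing in $\gamma$, the optimal bound is obtained by taking $\gamma\downarrow\gamma^*:=\inf\{\gamma>0:I(\gamma)>\gamma\Delta\}$ and $k^2\downarrow(c_3+\gamma^*\Delta)/c_2$; intersecting over a countable sequence of such pairs gives $\limsup\le\sqrt{(c_3+\gamma^*\Delta)/c_2}$ a.s. Finally, because by \cref{convex} the function $I$ is convex, vanishes for $\gamma\le c_1d^\alpha$ and grows superlinearly, $\gamma^*$ is the unique positive root of $I(\gamma)=\gamma\Delta$: solving this on the middle branch of \eqref{I} yields $\gamma^*=\gamma_1=c_1d^{\alpha+1}/(d-\Delta)$ precisely when $\gamma_1$ lies in that branch, i.e.\ when $d/(d-\Delta)\le\alpha+1$, and otherwise the root lies on the top branch of \eqref{I} and equals $\gamma_2=c_1(\alpha^{-1}\Delta)^\alpha(1+\alpha)^{1+\alpha}$; this reproduces the stated value of $\kappa$.

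The step I expect to be the main obstacle is this scale optimization: correctly trading the covering-number exponent $\gamma\Delta$ against the rate $I(\gamma)$ of \eqref{Asy}, and then pinning down which branch of the piecewise function \eqref{I} contains the crossing point $\gamma^*$ — this is exactly what produces the dichotomy $d/(d-\Delta)<\alpha+1$ and the two formulas for $\kappa$. The remaining ingredients (the integer-time reduction, the two union bounds, and Borel--Cantelli) are routine, although one should be mildly careful that \eqref{Asy} is applied with its supremum over cubes \emph{inside} the logarithm so that it controls each cube in the cover uniformly.
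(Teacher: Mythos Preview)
Your proposal is correct and follows essentially the same route as the paper (which refers to \cite[Theorem 2.3]{MS} and whose draft proof uses the identical covering/chaining scheme): cover $\mathcal X$ at scale $\e^{-\gamma T}$, split the bad event into ``some center escapes'' and ``some cube spreads'', apply \eqref{abschae} and \eqref{Asy} via a union bound, use Borel--Cantelli along integers, and then solve $I(\gamma)=\gamma\Delta$ on the appropriate branch of \eqref{I} to obtain $\gamma_1$ or $\gamma_2$. The only cosmetic difference is that the paper fixes the oscillation threshold at $u=1$ and absorbs it into the one-point event as $kT-1$, whereas you keep $u$ free and add it after; both lead to the same limit.
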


\begin{remark}\label{oneminusdelta} In addition to the assumptions of the previous theorem, let us assume that $\psi_t(x)=\phi_{0,t}(x)$ where $\phi$ is a flow (later, we  will only consider this case). Let $\mathcal{X}\subset \R^d$ be any compact set and let $B$ be a ball in $\R^d$ containing $\mathcal{X}$. Clearly, the boundary $\partial B$ of $B$ has box dimension $d-1$. The flow property of $\phi$ implies that for each $t\geq 0$, the boundary of $\phi_{0,t}(B)$ is contained in $\phi_{0,t}(\partial B)$ and therefore
any almost sure upper bound $\kappa$ for the linear expansion rate of the set $\partial B$ is at the same time an upper bound for the linear expansion rate of the set $ B$ and hence of $\mathcal{X}$. This means that 
in the case of a flow, the formula for $\kappa$ in the theorem always holds with $\Delta$ replaced by $d-1$ (or the minimum of $\Delta$ and $d-1$).  
%
\end{remark}

  \section{Quantitative version of Krylov estimates}\label{section4}
 We will show a quantitative version of Krylov estimates \eqref{est-Krlov}.  One can find similar results in the literature with implicit constants, for instance  \cite{KR}, \cite{Zhang2011} and \cite{XXZZ}, which however do not fit our needs since some proofs in later sections rely on the explicit dependence of the constants on the coefficients of the SDE. In the following lemma, a constant $C_{\mathrm {Kry}}$ appears 
 which  depends on $q,p,\oops, d$ only. While we will regard $p,\oops,d$ as fixed throughout, we will apply the formula with different values of $q$ and we will therefore write $C_{\mathrm {Kry}}(q)$ for clarity. 
  \begin{lemma}\label{lemm-Kry}
  If  \cref{Ass} holds and $(X_t)_{t\geq0}$ solves  \eqref{sde0}, then, for $f\in\tilde L_{q}(\mR^d)$ with $q\in (d,\infty]$, there exists a constant $C_{\mathrm{Kry}}(q) >0$
  depending on $q,p,\oops, d$ only such that for $0\leq s\leq t$,
  \begin{align}
      \label{est-Krlov}
      \mE[\int_s^t|f(X_r)|\dd r\Big|\mathcal{F}_s]\leq C_{\mathrm{Kry}}(q)\Gamma\big(K_2^{-\frac{1}{2}}(t-s)^{\frac{1}{2}}+(t-s)\big)\Vert f\Vert_{\tilde L_q},
  \end{align}
  where  $\Gamma:=\big(\frac{K_2}{K_1}\big)^{\frac{4d^2}{1-d/\rho}}+\big(\frac{\Vert\nabla\sigma\Vert_{\tilde L_\oops}^2}{K_1}\big)^{\frac{4d^2}{1-d/\rho}}+\big(\frac{\Vert  b\Vert_{\tilde L_{p}}}{K_1}\big)^{\frac{4d}{1-d/p}}$.
  \end{lemma}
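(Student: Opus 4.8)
The plan is to reduce the claim to a standard (implicit-constant) Krylov-type estimate, but to track the constant carefully through a Zvonkin-type transformation. First I would consider the auxiliary parabolic (or resolvent) equation
\[
\lambda u - \tfrac12 a^{ij}\partial_{ij}u - b\cdot\nabla u = f,
\]
and use the $\tilde H^{2,p}$-regularity estimates for this elliptic/parabolic operator with $\tilde L_p$-coefficients — these are exactly the estimates with explicit dependence on the coefficients that are promised in Appendix~\ref{A}. The point is that solving this equation with $f\in\tilde L_q$ gives $u\in\tilde H^{2,q}\hookrightarrow C^1_b$ with $\|u\|_\infty+\|\nabla u\|_\infty$ controlled by $\lambda^{-\theta}\Gamma\|f\|_{\tilde L_q}$ for a suitable power, where $\Gamma$ collects the powers of $K_2/K_1$, $\|\nabla\sigma\|_{\tilde L_\rho}^2/K_1$ and $\|b\|_{\tilde L_p}/K_1$ displayed in the statement. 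The exponents $\tfrac{4d^2}{1-d/\rho}$ and $\tfrac{4d}{1-d/p}$ are precisely what the interpolation/scaling arguments in the appendix yield.

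Next I would apply It\^o's formula to $u(X_r)$ between $s$ and $t$. Using the PDE, the drift part of $du(X_r)$ equals $(\lambda u - f)(X_r)\,dr$ plus a martingale term (the martingale is genuine since $\nabla u$ is bounded and $\sigma$ is bounded). Rearranging and taking $\mE[\cdot\mid\mathcal F_s]$ gives
\[
\mE\Big[\int_s^t f(X_r)\,dr\,\Big|\mathcal F_s\Big]
= \mE\big[u(X_s)-u(X_t)\mid\mathcal F_s\big] + \lambda\,\mE\Big[\int_s^t u(X_r)\,dr\,\Big|\mathcal F_s\Big].
\]
The first term is bounded by $2\|u\|_\infty$ and the second by $\lambda (t-s)\|u\|_\infty$, so the whole right-hand side is at most $(2+\lambda(t-s))\|u\|_\infty \lesssim (2+\lambda(t-s))\lambda^{-\theta}\Gamma\|f\|_{\tilde L_q}$. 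Now I would optimize over $\lambda>0$: choosing $\lambda$ of order $(t-s)^{-1}$ (or more precisely balancing $K_2^{-1/2}(t-s)^{1/2}$ against $(t-s)$, i.e.\ treating the short-time and long-time regimes separately) produces exactly the factor $K_2^{-1/2}(t-s)^{1/2}+(t-s)$ in \eqref{est-Krlov}, with all coefficient dependence absorbed into $C_{\mathrm{Kry}}(q)\Gamma$. The factor $K_2^{-1/2}$ arises because the natural time-scale of the generator is set by the ellipticity constants; a rescaling $x\mapsto K_2^{-1/2}x$ (or tracking $a$ through the appendix estimates) is what brings it out cleanly.

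The main obstacle I anticipate is not the It\^o/optimization bookkeeping but obtaining the PDE estimate with the \emph{correct explicit powers} of the coefficients in $\Gamma$. One has to run the classical Zvonkin scheme — first remove $\nabla\sigma$ (or rather handle the second-order part with $\tilde L_\rho$-coefficients via a perturbation/freezing-coefficients argument), then treat $b\in\tilde L_p$ as a lower-order perturbation — and at each step the smallness needed to absorb a term back into the principal part costs a power of the relevant norm over $K_1$, and the localization (passing from $L_p$ to $\tilde L_p$ via the cutoffs $\xi_\delta^z$, using \eqref{ineq:norms}) costs further powers of $d$. Carefully composing these, with the Sobolev embedding $\tilde H^{2,q}\hookrightarrow C^1$ valid for $q>d$ forcing the $1-d/p$, $1-d/\rho$ in the denominators, is the delicate part; I would quote Appendix~\ref{A} for the self-contained statement and only assemble the pieces here. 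A secondary technical point is justifying It\^o's formula for $u\in\tilde H^{2,q}$ rather than $C^2$: this follows by mollification together with the (already implicit, qualitative) Krylov estimate for $X$, which controls $\mE\int_s^t|\nabla^2 u_n - \nabla^2 u|(X_r)\,dr$ in terms of $\|u_n-u\|_{\tilde L_q}\to 0$.
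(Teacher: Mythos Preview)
Your high-level plan (solve an elliptic resolvent equation, apply It\^o to $u(X_r)$, use the explicit PDE estimates of Appendix~\ref{A}) is exactly the paper's strategy. However, two concrete steps in your execution diverge from the paper in ways that matter for obtaining the precise form \eqref{est-Krlov}.

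First, the paper solves $\lambda u-\tfrac12 a_{ij}\partial_{ij}u=f$ \emph{without} the drift term $b\cdot\nabla u$. Consequently It\^o's formula produces an extra term $\int_s^t b\cdot\nabla u(X_r)\,\dd r$, bounded by $\Vert\nabla u\Vert_\infty\,\mE[\int_s^t|b(X_r)|\,\dd r\mid\cF_s]$. The paper then applies the resulting inequality first with $f=|b|$, chooses $\lambda$ large enough that $U_{2,p}(\lambda)\Vert b\Vert_{\tilde L_p}\le\tfrac14$, and absorbs this term; only afterwards does it treat general $f$. Including $b$ in the PDE as you do is not wrong (Appendix~\ref{A} covers it), but it changes the bookkeeping and the constants.

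Second, and more importantly, you bound $u(X_s)-u(X_{t})$ by $2\Vert u\Vert_\infty$ and then propose to optimise over $\lambda$. The paper instead uses the mean value inequality $|u(X_s)-u(X_t)|\le\Vert\nabla u\Vert_\infty\,|X_s-X_t|$ and then BDG for the stochastic integral, which yields a factor $\sqrt{K_2}(t-s)^{1/2}\Vert\nabla u\Vert_\infty$; combined with the explicit bound on $\Vert\nabla u\Vert_\infty$ this is exactly where the $K_2^{-1/2}(t-s)^{1/2}$ term comes from, with a \emph{fixed} $\lambda$ of the order $\lambda_0$. Your route would give, after optimising in $\lambda$, a short-time behaviour $(t-s)^{1-d/(2q)}$ rather than $(t-s)^{1/2}$, and no natural mechanism for the $K_2^{-1/2}$ prefactor (the rescaling you allude to is not how the paper proceeds). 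So while your scheme produces \emph{a} quantitative Krylov estimate, it does not deliver the specific time-dependence and $K_2$-dependence asserted in \eqref{est-Krlov}; to match the statement you need the mean-value/BDG treatment of the boundary term and a fixed choice of $\lambda$, together with the bootstrap on $\int|b(X_r)|\,\dd r$.
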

  \begin{proof}
 It is sufficient to show the estimate for positive $f$. \eqref{est-Krlov} clearly holds when $q=\infty$, so we assume $q\in(d,\infty)$. All  positive constants $C_i$, $i=0,\cdots,7$ appearing in the proof only depend on $p,\oops,q, d$. We will regard $p, \oops$ and $d$ as fixed but we will vary $q$ in the following proof and we will therefore highlight the dependence of constants on $q$ in some cases (for $C_0$ and $C_1)$.  First we show that $a:=\sigma\sigma^*$ is $1-\frac{d}{\oops}$-H\"older continuous using Sobolev's embedding theorem and the condition that $ \sigma\in\tilde H^{1,\oops}$ with $\oops>d$. Indeed 
 \begin{align}\label{Holder-a}
   \omega_{1-d/\oops}(a):=&\sup_{x,y\in\mathbb{R}^d,x\neq y,|x-y|\leq 1}\frac{\Vert a(x)-a(y)\Vert}{|x-y|^{1-d/\oops}}  \nonumber\\\leq& \sup_{x,y\in\mathbb{R}^d,x\neq y,|x-y|\leq 1}\big(\frac{\Vert(\sigma\sigma^*)(x)-\sigma(x)\sigma^*(y)\Vert}{|x-y|^{1-d/\oops}}+\frac{\Vert\sigma(x)\sigma^*(y)-(\sigma\sigma^*)(y)\Vert}{|x-y|^{1-d/\oops}}\big)
   \nonumber\\ \leq&\sup_{x,y\in\mathbb{R}^d,x\neq y,|x-y|\leq 1}\big(\frac{\Vert\sigma^*(x)-\sigma^*(y)\Vert\Vert\sigma\Vert_\infty}{|x-y|^{1-d/\oops}}+\frac{\Vert\sigma(x)-\sigma(y)\Vert\Vert\sigma\Vert_\infty}{|x-y|^{1-d/\oops}}\big)\nonumber\\ \leq&  C_{\oops,d} \sqrt{K_2}\Vert\nabla\sigma\Vert_{\tilde L_\oops}.
 \end{align}
 We follow the idea from  \cite[Theorem 3.4]{Zhangzhao2018}.  Applying \cref{aprior-est-local} with $p'=\infty$, we see that there is a unique solution $u\in\tilde{H}^{2,q}$ to
  \begin{align}\label{pde-f}
      \lambda u-\frac{1}{2}a_{ij}\partial_{ij}u=f
  \end{align}
 provided that $\lambda\geq C_0(q)\frac{K_2^2}{K_1}(\frac{K_1+\sqrt{K_2}\Vert\nabla\sigma\Vert_{\tilde L_\oops}}{K_1})^{\frac{2}{1-d/\oops}}=:\lambda_0(q)$. Further, for $\lambda\geq\lambda_0(q)$,  we have
  \begin{align}
      \label{u-nablau}\sup_{x\in\mathbb{R}^d}|u(x)|&\leq C_1(q) \lambda^{-\frac{2-d/q}{2}}{K_1}^{-\frac{d}{2q}}\big(\frac{K_1+\sqrt{K_2}\Vert\nabla\sigma\Vert_{\tilde L_\oops}}{K_1}\big)^{\frac{d}{1-d/\rho}}\Vert f\Vert_{\tilde L_q}=:U_{1,q}(\lambda)\Vert f\Vert_{\tilde L_q},\nonumber\\ \sup_{x\in\mathbb{R}^d}|\nabla u(x)|&\leq C_1(q) \lambda^{-\frac{1-d/q}{2}}K_1^{-\frac{1+d/q}{2}}\big(\frac{K_1+\sqrt{K_2}\Vert\nabla\sigma\Vert_{\tilde L_\oops}}{K_1}\big)^{\frac{d}{1-d/\rho}}\Vert f\Vert_{\tilde L_q}=:U_{2,q}(\lambda)\Vert f\Vert_{\tilde L_q}.
  \end{align}
  Fix $t \geq s \geq 0$ and define the stopping time
  $$\tau_R:=\inf\Big\{\bar s>s:\int_s^{\bar s}\big|b(X_r)\big|\,\dd r\geq R\Big\},\quad 0<R<\infty.$$
  By the generalized It\^o's formula (see e.g. \cite[Lemma 4.1 (iii)]{XXZZ})
  \begin{align*}
      u&(X_{t\wedge\tau_R})-u(X_{s\wedge\tau_R})\\&=\frac{1}{2}\int_{s\wedge\tau_R}^{t\wedge\tau_R}a_{ij}(X_r)\partial_{ij}u(X_r)\,\dd r+\int_{s\wedge\tau_R}^{t\wedge\tau_R}\big(\nabla u(X_r)\big)^* \sigma(X_r)\,\dd W_r
      +\int_{s\wedge\tau_R}^{t\wedge\tau_R}b(X_r)\cdot\nabla u(X_r)\dd r.
  \end{align*}
  Using \eqref{pde-f}, the mean value theorem,   \eqref{u-nablau}  and BDG's inequality, we get that
  \begin{align}\label{Ef}
      \mE\big[&\int_{s\wedge\tau_R}^{t\wedge\tau_R}f(X_r)\dd r \Big|\mathcal{F}_s\big]\nonumber\\=&\mE\big[(u(X_{s\wedge\tau_R})-u(X_{t\wedge\tau_R}))\Big|\mathcal{F}_s\big]+ \mE\big[\lambda\int_{s\wedge\tau_R}^{t\wedge\tau_R}u(X_r)\,\dd r\Big|\mathcal{F}_s\big]+ \mE\big[\int_{s\wedge\tau_R}^{t\wedge\tau_R}b(X_r)\cdot\nabla u(X_r)\,\dd r\Big|\mathcal{F}_s\big]
      \nonumber \\\leq& \sup_{x\in\mathbb{R}^d}|\nabla u(x)|\mE\big[\big|\int_{s\wedge\tau_R}^{t\wedge\tau_R}b(X_r)\,\dd r
      +\int_{s\wedge\tau_R}^{t\wedge\tau_R}\sigma(X_r)\dd W_r\big|\Big|\mathcal{F}_s\big]+\lambda (t-s)\sup_{x\in\mathbb{R}^d}|u(x)|\nonumber \\&\quad \quad+\sup_{x\in\mR^d}|\nabla u(x)|\mE\big[\int_{s\wedge\tau_R}^{t\wedge\tau_R}\big|b(X_r)\big|\dd  r\Big|\mathcal{F}_s\big]
     \nonumber \\\leq& \sup_{x\in\mathbb{R}^d}|\nabla u(x)|C_2\sqrt{K_2}(t-s)^{\frac{1}{2}}+\lambda (t-s)\sup_{x\in\mathbb{R}^d}|u(x)|+2\sup_{x\in\mR^d}|\nabla u(x)|\mE\big[\int_{s\wedge\tau_R}^{t\wedge\tau_R}\big|b(X_r)\big|\dd r\Big|\mathcal{F}_s\big]
      \nonumber \\\leq& C_2\sqrt{K_2}(t-s)^{\frac{1}{2}}U_{2,q}(\lambda)\Vert f\Vert_{\tilde L_q}+\lambda (t-s)U_{1,q}(\lambda)\Vert f\Vert_{\tilde L_q} \nonumber\\&\quad\quad\quad\quad+2U_{2,q}(\lambda)\Vert f\Vert_{\tilde L_q}\mE\big[\int_{s\wedge\tau_R}^{t\wedge\tau_R}\big|b(X_r)\big|\dd r\Big|\mathcal{F}_s\big].
  \end{align}
  Here, the constant $C_2>0$ comes from BDG's inequality.
  We apply this inequality to $f=|b|$ with $q=p$. Then, for $\lambda\geq\lambda_0(p)$,
  \begin{align*}
   \mE\big[\int_{s\wedge\tau_R}^{t\wedge\tau_R}|b(X_r)|\dd r\Big|\mathcal{F}_s\big]   \leq& C_2\sqrt{K_2}(t-s)^{\frac{1}{2}}U_{2,p}(\lambda)\Vert b\Vert_{\tilde L_p}+\lambda (t-s)U_{1,p}(\lambda)\Vert b\Vert_{\tilde L_p}\\&+2U_{2,p}(\lambda)\Vert b\Vert_{\tilde L_p}\mE\big[\int_{s\wedge\tau_R}^{t\wedge\tau_R}|b(X_r)|\dd r\Big|\mathcal{F}_s\big].
  \end{align*}
 If $\lambda \geq \lambda_0(p)$ is so large that $U_{2,p}(\lambda)\Vert b\Vert_{\tilde L_p}=C_1(p)\lambda^{-\frac{1-d/p}{2}}K_1^{-\frac{1+d/p}{2}}\big(\frac{K_1+\sqrt{K_2}\Vert\nabla\sigma\Vert_{\tilde L_\oops}}{K_1}\big)^{\frac{d}{1-d/\rho}}\Vert b\Vert_{\tilde L_p}\leq \frac{1}{4},$ i.e. 
 \begin{align}
     \label{U2pb}  \lambda\geq \big( 4C_1(p)
     {K_1}^{\frac{-1-d/p}{2}}\big(\frac{K_1+\sqrt{K_2}\Vert\nabla\sigma\Vert_{\tilde L_\oops}}{K_1}\big)^{\frac{d}{1-d/\rho}}\Vert  b\Vert_{\tilde L_{p}}\big)^{\frac{2}{1-d/p}},
 \end{align} then we get
  \begin{align*}
     \mE\big[\int_{s\wedge\tau_R}^{t\wedge\tau_R}|b(X_r)|\dd r\Big|\mathcal{F}_s\big]   &\leq  \frac{C_2}2\sqrt{K_2}(t-s)^{\frac{1}{2}}+2\lambda (t-s)U_{1,p}(\lambda)\Vert b\Vert_{\tilde L_p}.
  \end{align*}
  Plugging this into \eqref{Ef}, observing that, by definition, $U_{1,p}(\lambda)U_{2,q}(\lambda)=U_{1,q}(\lambda)U_{2,p}(\lambda)$, and using \eqref{U2pb} yields, for $\lambda\geq\lambda_0(p)\vee \lambda_0(q)$ 
  satisfying \eqref{U2pb},
  \begin{align*}
   \mE&\big[\int_{s\wedge\tau_R}^{t\wedge\tau_R}f(X_r)\,\dd r\Big|\mathcal{F}_s\big]   \\\leq &C_3\big(\sqrt{K_2}(t-s)^{\frac{1}{2}}U_{2,q}(\lambda)+\lambda (t-s)(U_{1,q}(\lambda)+U_{1,p}(\lambda)U_{2,q}(\lambda)\Vert b\Vert_{\tilde L_p}\big)\Vert f\Vert_{\tilde L_q}
   \\\leq &2C_3\big(\sqrt{K_2}(t-s)^{\frac{1}{2}}U_{2,q}(\lambda)+\lambda (t-s)U_{1,q}(\lambda)\big)\Vert f\Vert_{\tilde L_q}.
  \end{align*}
Let $\lambda=C_4\big(\frac{K_2^2}{K_1}(\frac{K_1+\sqrt{K_2}\Vert\nabla\sigma\Vert_{\tilde L_\oops}}{K_1})^{\frac{2}{1-d/\oops}}+(4C_1(p){K_1}^{\frac{-1-d/p}{2}}\big(\frac{K_1+\sqrt{K_2}\Vert\nabla\sigma\Vert_{\tilde L_\oops}}{K_1}\big)^{\frac{d}{1-d/\rho}}\Vert  b\Vert_{\tilde L_{p}})^{\frac{2}{1-d/p}}\big)$ with $C_4>C_0(p)\vee C_0(q)\vee 1$, 
which implies 
\begin{align*}
   \sqrt{K_2}U_{2,q}(\lambda)&= C_1(q)\sqrt{K_2}(\lambda K_1)^{-\frac{1}{2}}(\lambda K_1^{-1})^{\frac{d}{2q}}\big(\frac{K_1+\sqrt{K_2}\Vert\nabla\sigma\Vert_{\tilde L_\oops}}{K_1}\big)^{\frac{d}{1-d/\rho}}
   \\&\leq C_5K_2^{-\frac{1}{2}}(\lambda K_1^{-1})^{\frac{d}{2q}}\big(\frac{K_1+\sqrt{K_2}\Vert\nabla\sigma\Vert_{\tilde L_\oops}}{K_1}\big)^{\frac{d}{1-d/\rho}}
   \\&\leq C_6K_2^{-\frac{1}{2}}\Big(\big(\frac{K_2}{K_1}\big)^{\frac{4d^2}{1-d/\rho}}+\big(\frac{\Vert\nabla\sigma\Vert_{\tilde L_\oops}^2}{K_1}\big)^{\frac{4d^2}{1-d/\rho}}+\big(\frac{\Vert  b\Vert_{\tilde L_{p}}}{K_1}\big)^{\frac{4d}{1-d/p}}\Big)
\end{align*}
and 
\begin{align*}
   \lambda U_{1,q}(\lambda)&= C_{1}(q)(\lambda K_1^{-1})^{\frac{d}{2q}}\big(\frac{K_1+\sqrt{K_2}\Vert\nabla\sigma\Vert_{\tilde L_\oops}}{K_1}\big)^{\frac{d}{1-d/\rho}}\\&\leq C_{7}\Big(\big(\frac{K_2}{K_1}\big)^{\frac{4d^2}{1-d/\rho}}+\big(\frac{\Vert\nabla\sigma\Vert_{\tilde L_\oops}^2}{K_1}\big)^{\frac{4d^2}{1-d/\rho}}+\big(\frac{\Vert  b\Vert_{\tilde L_{p}}}{K_1}\big)^{\frac{4d}{1-d/p}}\Big).
\end{align*}
In the above estimates we used the fact that $p>2d$ and $q>d$. Therefore,
  \begin{align}\label{R1}
   \mE&\Big[\int_{s\wedge\tau_R}^{t\wedge\tau_R}f(X_r)\dd r\Big|\mathcal{F}_s\Big]
    \nonumber\\&\leq   C_{\mathrm{Kry}}(q)\Big(\big(\frac{K_2}{K_1}\big)^{\frac{4d^2}{1-d/\rho}}+\big(\frac{\Vert\nabla\sigma\Vert_{\tilde L_\oops}^2}{K_1}\big)^{\frac{4d^2}{1-d/\rho}}+\big(\frac{\Vert  b\Vert_{\tilde L_{p}}}{K_1}\big)^{\frac{4d}{1-d/p}}\Big)
   [K_2^{-\frac{1}{2}}(t-s)^{\frac{1}{2}}+(t-s)]\Vert f\Vert_{\tilde L_q}.
  \end{align}
    Letting $R\rightarrow \infty$ we therefore get \eqref{est-Krlov}.  
    \end{proof}
The following corollary is a quantitative version of  Khasminskii's lemma. The constant $C_{\mathrm{Kry}}(q)$ appearing in there is the same as in the previous lemma.

\begin{corollary}\label{est-exp}
Let \cref{Ass} hold, let $\Gamma:=\Big(\big(\frac{K_2}{K_1}\big)^{\frac{4d^2}{1-d/\rho}}+\big(\frac{\Vert\nabla\sigma\Vert_{\tilde L_\oops}^2}{K_1}\big)^{\frac{4d^2}{1-d/\rho}}+\big(\frac{\Vert  b\Vert_{\tilde L_{p}}}{K_1}\big)^{\frac{4d}{1-d/p}}\Big)$. Then, for any $f\in\tilde L_q(\mR^d)$ with $q\in (d,\infty]$, any $0\leq S \leq T$, and any $0<\lambda<\infty$, the solution $(X_t)_{t\geq0}$ of \eqref{sde0} satisfies
\begin{align}
    \label{Khas-Kry-gener1}
    \mE\exp\Big(\lambda\int_S^T|f(X_r)|\dd r\Big)\leq  
     2\cdot 2^{(T-S)\big(\frac{\kappa}{2}K_2^{-1/2}+\sqrt{\frac{\kappa^2}{4}K_2^{-1}+\kappa}   \big)^2}
    \leq 2\cdot 2^{(T-S)\big(\frac{\kappa^2}{K_2}+2\kappa\big)},
\end{align}
where $\kappa:=2C_{\mathrm{Kry}}(q)\lambda\Gamma\Vert f\Vert_{\tilde L_q}$.
\end{corollary}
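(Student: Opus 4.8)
The plan is to run the classical Khasminskii series expansion on a short time interval and then chain the resulting estimate over a partition of $[S,T]$; the only input needed is the conditional Krylov bound of \cref{lemm-Kry}.

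First I would establish the single-interval building block: if $0\le S'\le T'$ are such that $\delta:=\sup_{s\in[S',T']}\mE\big[\lambda\int_s^{T'}|f(X_r)|\,\dd r\mid\mathcal{F}_s\big]<1$, then $\mE\big[\exp(\lambda\int_{S'}^{T'}|f(X_r)|\,\dd r)\mid\mathcal{F}_{S'}\big]\le(1-\delta)^{-1}$. This follows by expanding the exponential into its power series, using the simplex identity $\big(\lambda\int_{S'}^{T'}|f(X_r)|\,\dd r\big)^n=n!\int_{S'\le t_1\le\cdots\le t_n\le T'}\prod_{i=1}^n\lambda|f(X_{t_i})|\,\dd t_1\cdots\dd t_n$, conditioning on $\mathcal{F}_{t_{n-1}}$ to peel off the innermost integral over $t_n\in[t_{n-1},T']$ (whose conditional expectation is $\le\delta$ by the definition of $\delta$) while $\prod_{i<n}\lambda|f(X_{t_i})|$ is $\mathcal{F}_{t_{n-1}}$-measurable, and then invoking nonnegativity (Tonelli) and the tower property to get $\mE[(\,\cdots)^n\mid\mathcal{F}_{S'}]\le n\delta\,\mE[(\,\cdots)^{n-1}\mid\mathcal{F}_{S'}]$; induction gives $\mE[(\,\cdots)^n\mid\mathcal{F}_{S'}]\le n!\,\delta^n$ and summing the geometric series finishes this step.

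Next I would fix the subinterval length. \cref{lemm-Kry}, applied to $|f|\in\tilde L_q$ with the parameter $q$, gives $\mE\big[\lambda\int_s^t|f(X_r)|\,\dd r\mid\mathcal{F}_s\big]\le\tfrac{\kappa}{2}\big(K_2^{-1/2}(t-s)^{1/2}+(t-s)\big)$ for all $s\le t$, and the right-hand side is increasing in $t-s$. So I would take $\ell>0$ to be the unique positive solution of $\kappa\big(K_2^{-1/2}\ell^{1/2}+\ell\big)=1$; solving the quadratic in $\ell^{1/2}$ and rationalizing the numerator yields $\ell^{-1}=\big(\tfrac{\kappa}{2}K_2^{-1/2}+\sqrt{\tfrac{\kappa^2}{4}K_2^{-1}+\kappa}\big)^2$. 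For this choice, every subinterval of length at most $\ell$ makes the quantity $\delta$ of the building block at most $\tfrac12$, hence the conditional factor associated with it at most $2$.

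Then comes the chaining: partition $[S,T]$ into $N:=\lceil(T-S)/\ell\rceil$ consecutive pieces $S=s_0<s_1<\cdots<s_N=T$ of length at most $\ell$, write $\exp\big(\lambda\int_S^T|f(X_r)|\,\dd r\big)=\prod_{i=1}^N\exp\big(\lambda\int_{s_{i-1}}^{s_i}|f(X_r)|\,\dd r\big)$, and condition successively on $\mathcal{F}_{s_{N-1}},\dots,\mathcal{F}_{s_0}$; each step replaces the rightmost remaining factor by its conditional expectation, which is $\le2$ by the previous two steps (using the monotonicity so that every $s\in[s_{i-1},s_i]$ gives $\delta\le\tfrac12$). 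This yields $\mE\exp\big(\lambda\int_S^T|f(X_r)|\,\dd r\big)\le2^N\le2\cdot2^{(T-S)/\ell}$, which, after inserting the value of $\ell^{-1}$, is exactly the first inequality in \eqref{Khas-Kry-gener1}; the second is just $(a+b)^2\le2a^2+2b^2$ with $a=\tfrac{\kappa}{2}K_2^{-1/2}$ and $b=\sqrt{\tfrac{\kappa^2}{4}K_2^{-1}+\kappa}$, and the case $q=\infty$ is already contained in \cref{lemm-Kry}. I do not expect a genuine obstacle: the only delicate point is the bookkeeping — choosing $\ell$ so that $\delta$ equals exactly $\tfrac12$ (making the per-interval factor $2$ sharp) and handling the ceiling in $N$ to produce the clean final factor $2\cdot2^{(T-S)/\ell}$.
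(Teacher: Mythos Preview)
Your proof is correct and follows essentially the same route as the paper: apply the Krylov estimate of \cref{lemm-Kry} on short subintervals to make the conditional mean $\le\tfrac12$, use the Khasminskii series expansion to get a conditional factor $\le 2$ per subinterval, and chain over a partition of $[S,T]$; the paper cites \cite[Lemma 3.5]{XieZhang2017} for the series step you spell out, and the choice of subinterval length (your $\ell$, their $(T-S)/n$) and the final $(a+b)^2\le 2a^2+2b^2$ simplification are identical.
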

\begin{proof} The second inequality is an application of the general inequality $(A+B)^2\leq 2A^2 + 2B^2$.

\cref{lemm-Kry} shows that there exists some positive integer $n$ such that,  for $j=0,\cdots,n-1$,
\begin{align}\label{half}
    \lambda \mE\Big[\int_{\frac{(T-S)j}{n}}^{\frac{(T-S)(j+1)}{n}}\Big|f(X_r)\Big|\dd  r\Big|\mathcal{F}_{\frac{(T-S)j}{n}}\Big] \leq \frac{1}{2}
\end{align}
and the proof of \cite[Lemma 3.5]{XieZhang2017} shows that for any such $n$ we have  
\begin{align*}
     \mE\exp\Big(\lambda\int_S^T|f(X_r)|\dd r\Big)\leq 2^n
\end{align*}
(see also \cite[Lemma 3.5]{LL}). 
By \cref{lemm-Kry}, any  $n$ such that
\begin{align*}
    C_{\mathrm{Kry}}(q)\lambda \Gamma \Vert f\Vert_{\tilde L_q}\Big[\Big(\frac{T-S}{K_2n}\Big)^{\frac{1}{2}}+\frac{T-S}{n}\Big]\leq \frac{1}{2} 
\end{align*}
satisfies \eqref{half}. In particular, we can take
$$
n=\Big\lfloor (T-S) \Big( \frac{\kappa}{2}K_2^{-1/2}+\sqrt{\frac{\kappa^2}{4}K_2^{-1}+\kappa}   \Big)^2     \Big\rfloor+1
$$
Here,  $\lfloor x\rfloor$ is the largest integer   that is smaller than or equal to $x \in \R$. Therefore \eqref{Khas-Kry-gener1} holds.
\end{proof}
 
  \begin{remark}
  Note that the right hand side of our version of Krylov's estimate contains the factor   $(t-s)^{1/2}+(t-s)$ 
  instead of $C(T)(t-s)^{1-\frac{d}{2q}}$  in \cite[Theorem 3.4 (3.8)]{Zhangzhao2018}), where $C(T)$ depends on the final time $T$. Further, we require the condition $q>d$ instead of $q>d/2$ in   \cite[Theorem 3.4 (3.8)]{Zhangzhao2018}). The reason for our restriction to $q>d$ is that we use \eqref{u-nablau} which only holds for $q>d$. Since we will later apply Krylov's estimate to $f:=|b^*\cdot\sigma^{-1}|^2$ which is in 
  $\tilde L_{p/2}$ we will have to assume $p>2d$.
   \end{remark}
  
  \begin{remark}
   More general versions  of the quantitative Khasminskii's Lemma (but with less explicit constants) can be found in \cite{Le}. 
  \end{remark}
  
 \section{Upper bounds for the dispersion of sets induced by the flow generated by the solution to   SDE}\label{section5}
 Depending on the regularity of the SDE's coefficients   we show  upper bounds for the dispersion of sets under the flow generated by the solution  in the following two cases.
 \subsection{Stability estimates of the  SDE with weakly differentiable coefficients}
 Consider the equation
\begin{align}\label{SDEY}\dd  Y_t^i=\tilde b(Y_t^i)\,\dd t+\tilde\sigma(Y_t^i)\,\dd W_t, \quad Y_0^i=y_i\in\mathbb{R}^d,\quad  i=1,2.
  \end{align}
For $\tilde b$ and $\tilde \sigma$ we assume:
\begin{assumption}\label{ass-transformed}
For $p,\rho\in(2d,\infty)$, 
 \begin{itemize}
     \item[1.] $\Vert  \tilde  b\Vert_{\tilde H^{1,p}}+\Vert \tilde b\Vert_{\infty}<\infty$;
     \item[2.] $\Vert \nabla\tilde \sigma\Vert_{\tilde L_\rho}<\infty$;
     \item[3.] for $\tilde a:=\tilde \sigma\tilde \sigma^*$, there exist some $\tilde K_1,\tilde K_2>0$ such that for all $x\in\mathbb{R}^d$, $$\tilde K_1|\zeta|^2\leq\langle\tilde a(x)\zeta,\zeta\rangle\leq \tilde K_2|\zeta|^2,\quad \forall \zeta\in\mathbb{R}^d.  $$  
 \end{itemize}
\end{assumption}

\begin{theorem}\label{Transformed-two-point}
Let \cref{ass-transformed} hold.  There exist constants   $\kappa_0,\kappa_1>0$ depending only on $p,d$, $\rho$, such that for any $r\geq 1$, $T \ge 0$, $y_i\in\mR^d$, $i=1,2,$  the solutions $Y^i:=Y^i(y_i)$ to equations \eqref{SDEY} satisfy
\begin{align}\label{est-Y-sobolev}
\mE[\sup_{t\in[0,T]}|Y_t^1(y_1)-Y_t^2(y_2)|^r]
\leq \kappa_0|y_1-y_2|^r\exp(\kappa_1T\varrho),
\end{align}
where
\begin{align}\label{rho-zero-Y-sobolev}
    \varrho:= r^4\Big[\Vert \tilde b\Vert_{\infty}+\Vert \tilde\sigma\Vert_\infty^2
+(\tilde\Gamma\Vert \nabla\tilde b\Vert_{\tilde L_p})^2\tilde K_2^{-1}+\tilde\Gamma\Vert \nabla\tilde b\Vert_{\tilde L_p} +\tilde\Gamma^2\Vert \nabla\tilde\sigma\Vert_{\tilde L_\oops}^4\tilde K_2^{-1}+\tilde\Gamma\Vert \nabla\tilde\sigma\Vert_{\tilde L_\oops}^2\Big].
\end{align}
and   $\tilde \Gamma:=\Big(\big(\frac{\tilde K_2}{\tilde K_1}\big)^{\frac{4d^2}{1-d/\rho}}+\big(\frac{\Vert\nabla\tilde\sigma\Vert_{\tilde L_\oops}^2}{\tilde K_1}\big)^{\frac{4d^2}{1-d/\rho}}+\big(\frac{\Vert  \tilde b\Vert_{\tilde L_{p}}}{\tilde K_1}\big)^{\frac{4d}{1-d/p}}\Big)$.
\end{theorem}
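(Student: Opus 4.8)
The plan is to apply It\^o's formula to the smooth function $V_\varepsilon(Z_t):=(|Z_t|^2+\varepsilon)^{r/2}$, where $Z_t:=Y_t^1(y_1)-Y_t^2(y_2)$ and $\varepsilon>0$, deriving a \emph{multiplicative} stochastic differential inequality for $V_\varepsilon(Z_t)$, controlling the resulting exponential functional by the quantitative Khasminskii estimate \cref{est-exp}, and finally letting $\varepsilon\downarrow0$ to obtain the bound for $\mE[\sup_{t\le T}|Z_t|^r]$. Working with $V_\varepsilon$ rather than $|\cdot|^r$ also removes any need to discuss non-vanishing of $Z$ in the case $r<2$. The lack of (even one-sided) Lipschitz regularity of $\tilde b,\tilde\sigma$ is absorbed through the Hardy--Littlewood maximal function.

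First I would record the maximal-function bounds. Writing $\cM$ for the local Hardy--Littlewood maximal operator (maximal averages over balls of radius $\le 1$, say), one has at every pair of Lebesgue points $x\ne y$ of a locally weakly differentiable function $f$ with $|x-y|\le 1$ the estimate $|f(x)-f(y)|\le C_d|x-y|\big(\cM|\nabla f|(x)+\cM|\nabla f|(y)\big)$; combined with the boundedness of $\tilde b$ and of $\tilde\sigma$ (the latter since $\|\tilde\sigma\|^2\le\tilde K_2$) this yields, for all Lebesgue points $x,y$,
\[
|\tilde b(x)-\tilde b(y)|\le C_d\,|x-y|\,g_b(x,y),\qquad \|\tilde\sigma(x)-\tilde\sigma(y)\|\le C_d\,|x-y|\,g_\sigma(x,y),
\]
with $g_b(x,y):=\cM|\nabla\tilde b|(x)+\cM|\nabla\tilde b|(y)+\|\tilde b\|_\infty$ and $g_\sigma$ defined analogously from $\tilde\sigma$. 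Since the maximal operator is bounded on $\tilde L_q$ for $q>1$, we have $\cM|\nabla\tilde b|\in\tilde L_p$ and $(\cM|\nabla\tilde\sigma|)^2\in\tilde L_{\rho/2}$ with norms bounded by $C_d\|\nabla\tilde b\|_{\tilde L_p}$, resp.\ $C_d\|\nabla\tilde\sigma\|_{\tilde L_\rho}^2$, and $p,\rho>2d$ guarantees $p>d$ and $\rho/2>d$. Moreover \cref{ass-transformed} implies that \eqref{SDEY} satisfies \cref{Ass} with constants $\tilde K_1,\tilde K_2,\|\nabla\tilde\sigma\|_{\tilde L_\rho},\|\tilde b\|_{\tilde L_p}$ (whence the quantity $\tilde\Gamma$), so \cref{lemm-Kry} applies to $Y^1$ and $Y^2$; as $\|\1_N\|_{\tilde L_q}=0$ for Lebesgue-null $N$, it gives $\mE\int_0^T\1_N(Y^i_s)\,\dd s=0$, hence for a.e.\ $s\in[0,T]$ both $Y^1_s$ and $Y^2_s$ are $\mP$-a.s.\ Lebesgue points of $\tilde b$ and $\tilde\sigma$ (an intersection of two probability-one events). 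This is what licenses the use of the two displayed inequalities at $(Y^1_s,Y^2_s)$ for a.e.\ $s$, almost surely.

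Next I would run It\^o's formula: since $Y^1,Y^2$ are continuous semimartingales and $V_\varepsilon\in C^\infty$, It\^o and the bounds above give $V_\varepsilon(Z_t)=V_\varepsilon(y_1-y_2)\exp\big(\int_0^t\beta^\varepsilon_s\,\dd s+N^\varepsilon_t\big)$ with $N^\varepsilon$ a continuous local martingale and, \emph{uniformly in} $\varepsilon>0$ (because $|Z_s|^2\le|Z_s|^2+\varepsilon$ turns the $|Z_s|^{-1}$ coming from $\nabla V_\varepsilon$ into bounded multiples of $V_\varepsilon(Z_s)$),
\[
|\beta^\varepsilon_s|\le C_d\,r^2\big(g_b(Y^1_s,Y^2_s)+g_\sigma(Y^1_s,Y^2_s)^2\big),\qquad \dd\langle N^\varepsilon\rangle_s\le C_d\,r^2\,g_\sigma(Y^1_s,Y^2_s)^2\,\dd s.
\]
With $\cE^\varepsilon_t:=\exp(N^\varepsilon_t-\tfrac12\langle N^\varepsilon\rangle_t)$ --- a genuine martingale since $\mE\exp(\lambda\langle N^\varepsilon\rangle_T)<\infty$ for every $\lambda$ by \cref{est-exp} --- one writes $\exp(N^\varepsilon_t)=\cE^\varepsilon_t\exp(\tfrac12\langle N^\varepsilon\rangle_t)$ and, by Cauchy--Schwarz,
\[
\mE\Big[\sup_{t\le T}V_\varepsilon(Z_t)\Big]\le V_\varepsilon(y_1-y_2)\Big(\mE\exp\Big(2\!\int_0^T|\beta^\varepsilon_s|\,\dd s+\langle N^\varepsilon\rangle_T\Big)\Big)^{1/2}\Big(\mE\Big[\sup_{t\le T}(\cE^\varepsilon_t)^2\Big]\Big)^{1/2}.
\]
The second factor, using $(\cE^\varepsilon_t)^2=\exp(2N^\varepsilon_t-2\langle N^\varepsilon\rangle_t)\exp(\langle N^\varepsilon\rangle_t)\le\exp(2N^\varepsilon_t-2\langle N^\varepsilon\rangle_t)\exp(\langle N^\varepsilon\rangle_T)$, Doob's $L^2$-inequality and Cauchy--Schwarz, is bounded by a universal multiple of an exponential moment of $\langle N^\varepsilon\rangle_T$; the first factor, by Hölder's inequality, splits into a product over the finitely many summands $\cM|\nabla\tilde b|(Y^i_s)$, $(\cM|\nabla\tilde\sigma|(Y^i_s))^2$ and the constants $\|\tilde b\|_\infty,\|\tilde\sigma\|_\infty^2$. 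Every resulting factor has the form $\mE\exp(\lambda\int_0^T|f(Y^i_s)|\,\dd s)$ with $\lambda$ of order $r^2$ and $f$ one of $\cM|\nabla\tilde b|\in\tilde L_p$, $(\cM|\nabla\tilde\sigma|)^2\in\tilde L_{\rho/2}$, or a constant, so the version of \cref{est-exp} for \eqref{SDEY} bounds it by $2\cdot2^{T(\kappa^2/\tilde K_2+2\kappa)}$ with $\kappa=2C_{\mathrm{Kry}}(q)\lambda\tilde\Gamma\|f\|_{\tilde L_q}$, again of order $r^2$. Then the $\kappa^2/\tilde K_2$ terms reproduce $(\tilde\Gamma\|\nabla\tilde b\|_{\tilde L_p})^2\tilde K_2^{-1}$ and $\tilde\Gamma^2\|\nabla\tilde\sigma\|_{\tilde L_\rho}^4\tilde K_2^{-1}$ times $r^4$, the $2\kappa$ terms reproduce $\tilde\Gamma\|\nabla\tilde b\|_{\tilde L_p}$ and $\tilde\Gamma\|\nabla\tilde\sigma\|_{\tilde L_\rho}^2$ times $r^2\le r^4$, and the constants reproduce $\|\tilde b\|_\infty,\|\tilde\sigma\|_\infty^2$ times $r^2\le r^4$ --- i.e.\ exactly $\varrho$ of \eqref{rho-zero-Y-sobolev}, with all other constants depending only on $p,d,\rho$. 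Finally $V_\varepsilon(y_1-y_2)\downarrow|y_1-y_2|^r$ and $V_\varepsilon(Z_t)\downarrow|Z_t|^r$ as $\varepsilon\downarrow0$, so monotone convergence yields \eqref{est-Y-sobolev}.

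The two delicate points are: first, making the maximal-function substitution rigorous along the trajectory --- which is exactly why one has to invoke Krylov's estimate \cref{lemm-Kry} to see that $(Y^1_s,Y^2_s)$ avoids Lebesgue-null sets for a.e.\ $s$ almost surely; and second, the bookkeeping that keeps the Khasminskii parameter $\lambda$ (and hence $\kappa$) of order $r^2$, so that the final exponent is $O(r^4)$ and not some larger power of $r$. The hypothesis $p,\rho>2d$ is used precisely to place $\cM|\nabla\tilde b|$ and $(\cM|\nabla\tilde\sigma|)^2$ into localised $L^q$-spaces with $q>d$, as \cref{lemm-Kry} and \cref{est-exp} require.
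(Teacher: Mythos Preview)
Your argument is correct and reaches the same conclusion, but the route differs from the paper's. The paper applies It\^o's formula to $|Y^1_t-Y^2_t|^{2r}$ to obtain an inequality of the form
\[
|Y^1_t-Y^2_t|^{2r}\le |y_1-y_2|^{2r}+\int_0^t |Y^1_s-Y^2_s|^{2r}\,\dd\bar A_s+M_t,
\]
bounds $\bar A$ by the same maximal-function integrands you use, controls $\mE[\exp(\bar A_T)]$ via the quantitative Khasminskii lemma, and then closes with the \emph{stochastic Gr\"onwall inequality} of \cite{MS13} (or \cite[Lemma~3.7]{XieZhang2017}), which directly yields $\mE[\sup_{t\le T}|Y^1_t-Y^2_t|^{r}]\le C|y_1-y_2|^r(\mE[\exp(\bar A_T)])^{1/2}$. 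You instead write $V_\varepsilon(Z_t)$ \emph{multiplicatively}, peel off the Dol\'eans--Dade exponential, and handle the resulting factors by Cauchy--Schwarz, Doob's $L^2$-inequality, and Khasminskii. Both reductions land on the same exponential moments of $\int_0^T \cM|\nabla\tilde b|(Y^i_s)\,\dd s$ and $\int_0^T (\cM|\nabla\tilde\sigma|(Y^i_s))^2\,\dd s$, hence the identical shape of $\varrho$. What your approach buys is self-containment (no external Gr\"onwall-type lemma) and a clean treatment of $r<2$ via the $\varepsilon$-regularisation; what the paper's approach buys is brevity once the stochastic Gr\"onwall lemma is taken as a black box. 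Your careful remark about Lebesgue points (using \cref{lemm-Kry} to show the trajectories avoid null sets) is a point the paper leaves implicit.
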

\begin{proof} Again, all constants $C_1,...$ depend on $p,\rho,d$ only.
By It\^o's formula we get for any $r\geq1$,
\begin{align}\label{pre-Gron}
    |Y_t^1-Y_t^2|^{2r}=&|y_1-y_2|^{2r}+\int_0^t|Y_s^1-Y_s^2|^{2r}\dd A_s+M_t\leq |y_1-y_2|^{2r}+\int_0^t|Y_s^1-Y_s^2|^{2r}\dd \bar A_s+M_t,
\end{align}
where $M_t$ is an $(\mathcal{F}_t)$-local martingale defined as
$$M_t:=\int_0^t2r|Y_s^1-Y_s^2|^{2r-2}[\tilde \sigma(Y_s^1)-\tilde \sigma(Y_s^2)]^*(Y_s^1-Y_s^2)\,\dd W_s$$
and
\begin{align*}
    A_t:=&\int_0^t\frac{2r\langle Y_s^1-Y_s^2,\tilde b(Y_s^1)-\tilde b(Y_s^2)\rangle+r\Vert\tilde \sigma(Y_s^1)-\tilde \sigma(Y_s^2)\Vert^2}{|Y_s^1-Y_s^2|^2}\dd s
    \\&+\int_0^t\frac{2r(r-1)|[\tilde \sigma(Y_s^1)-\tilde \sigma(Y_s^2)]^*(Y_s^1-Y_s^2)|^2}{|Y_s^1-Y_s^2|^4}\dd s
\end{align*}
and
\begin{align*}
    \bar A_t:=&\int_0^t\frac{2r|\langle Y_s^1-Y_s^2,\tilde b(Y_s^1)-\tilde b(Y_s^2)\rangle|+r\Vert\tilde \sigma(Y_s^1)-\tilde \sigma(Y_s^2)\Vert^2}{|Y_s^1-Y_s^2|^2}\dd s
    \\&+\int_0^t\frac{2r(r-1)|[\tilde \sigma(Y_s^1)-\tilde \sigma(Y_s^2)]^*(Y_s^1-Y_s^2)|^2}{|Y_s^1-Y_s^2|^4}\dd s.
\end{align*}

There exists $C_1>0$   such that for each $x,y \in \R^d$
\begin{align*}
    |\tilde \sigma(x)-\tilde \sigma(y)|&\leq C_1|x-y|(\mathcal{M}|\nabla\tilde\sigma|(x)+\mathcal{M}|\nabla\tilde\sigma|(y)+\Vert\tilde\sigma\Vert_\infty),\\
    |\tilde b(x)-\tilde b(y)|&\leq C_1|x-y|(\mathcal{M}|\nabla\tilde b|(x)+\mathcal{M}|\nabla\tilde b|(y)+\Vert\tilde b\Vert_\infty),
\end{align*}
where $\mathcal{M}f$ is defined as $\mathcal{M} f(x):=\sup_{r\in (0,1)}\frac{1}{|B_r|}\int_{B_r}f(x+y)\dd y,$
which satisfies  
\begin{align}\label{max-ineq}\Vert \mathcal{M}f\Vert_{\tilde L_\gamma}\leq C(\gamma,d)\Vert f\Vert_{\tilde L_\gamma}\quad \text{ for }\quad \gamma>1,\end{align}
see  \cite[Lemma 2.1]{XXZZ}.

Using these estimates and the Cauchy–Schwarz inequality, we get 
\begin{align*}
    \bar A_t\leq &C_2\Big(r\Big(\int_0^t\mathcal{M}|\nabla \tilde b|(Y_s^1)+\mathcal{M}|\nabla \tilde b|(Y_s^2)\dd s+t\Vert\tilde b\Vert_\infty\Big)
    \\&+r\Big(\int_0^t\mathcal{M}|\nabla \tilde \sigma|^2(Y_s^1)+\mathcal{M}|\nabla \tilde \sigma|^2(Y_s^2)\dd s+t\Vert\tilde \sigma\Vert_\infty^2\Big)
    \\
    &+2r(r-1)\Big(\int_0^t\mathcal{M}|\nabla \tilde \sigma|^2(Y_s^1)+\mathcal{M}|\nabla \tilde \sigma|^2(Y_s^2)\dd s+t\Vert\tilde \sigma\Vert_\infty^2\Big)\Big)
    \\= & tC_2 \big(r\Vert\tilde b\Vert_\infty+(2r^2-r)\Vert\tilde \sigma\Vert_\infty^2\big)
    \\&+ C_2   \sum_{i=1}^2 \int_0^t  r\mathcal{M}|\nabla \tilde b|(Y_s^i)+(2r^2-r)\mathcal{M}|\nabla \tilde \sigma|^2(Y_s^i)\,\dd s.
\end{align*}
Applying  \cref{est-exp} and \eqref{max-ineq} we get,  for $\alpha>0$ and $t\geq 0$, 
 \begin{align}\label{alphaA}
    \mE[\exp(\alpha \bar A_t)]\leq 16 \exp\big[C_3\varrho_\alpha t\big],
\end{align}
where
\begin{align}\label{rho-est}
    \varrho_\alpha=  &\alpha \big(r\Vert\tilde b\Vert_\infty+r^2\Vert\tilde \sigma\Vert_\infty^2\big)
    +  (r\alpha\tilde\Gamma\Vert \nabla \tilde b\Vert_{\tilde L_p})^2\tilde K_2^{-1}+r\alpha\tilde\Gamma\Vert \nabla \tilde b\Vert_{\tilde L_p} 
   \nonumber\\&+(\alpha r^2\tilde\Gamma\Vert \nabla\tilde\sigma\Vert_{\tilde L_\rho}^2)^2\tilde K_2^{-1}+(\alpha r^2\tilde\Gamma\Vert \nabla\tilde\sigma\Vert_{\tilde L_\rho}^2).
\end{align}
Choosing $\alpha=1$ and
applying stochastic Gr\"onwall's inequality (see \cite[Theorem 4]{MS13} or \cite[Lemma 3.7]{XieZhang2017}) to \eqref{pre-Gron} we get 
$$
    \mE[\sup_{t\in[0,T]}|Y_t^1-Y_t^2|^r]
    \leq C_4  |y_1-y_2|^r \Big(\mE \big[\exp \big(\bar A_T\big)\big] \Big)^{1/2}\leq 4 C_4  |y_1-y_2|^r
     \exp \Big(\frac12 C_3 \varrho_1 T\Big).
$$
Observing that $\varrho_1$ is at most equal to $\varrho_0$ defined in \eqref{rho-zero-Y-sobolev} and defining $\kappa_0=4C_4$ and $\kappa_1=\frac 12 C_3$, \eqref{est-Y-sobolev} follows.
%
\end{proof}

\begin{remark}
 If $\tilde \sigma$ is even globally Lipschitz continuous with Lipschitz constant $L$, then there is no need to use 
 Khasminskii's Lemma for the integral over $\tilde \sigma$ and we easily get \eqref{est-Y-sobolev} with
 $$
 \varrho= r^2  
 \Big[\Vert \tilde b\Vert_{\infty}
+(\tilde\Gamma\Vert \nabla\tilde b\Vert_{\tilde L_p})^2\tilde K_2^{-1}+\tilde\Gamma\Vert \nabla\tilde b\Vert_{\tilde L_p}+L^2\Big]
 $$
 and
 $$
\tilde \Gamma:=\Big(\big(\frac{\tilde K_2}{\tilde K_1}\big)^{4d^2}+\big(\frac{L}{\tilde K_1}\big)^{4d^2}+\big(\frac{\Vert  \tilde b\Vert_{\tilde L_{p}}}{\tilde K_1}\big)^{\frac{4d}{1-d/p}}\Big).
 $$
 \end{remark}

\subsection{Linear expansion rate of the  SDE with singular coefficients}

\begin{theorem}\label{2-point-sde}
Let \cref{Ass} hold.  Let $(\psi_t)_{t\geq0}$ denote the flow generated by the solution to  \eqref{sde0}. Let $\mathcal{X}$ be a compact subset of   $\mathbb{R}^d$. Then there exists a  positive constant $C_{p,\rho,d}$ depending on $p, d, \rho$ only  such that 
 \begin{align}
     \label{upper-bound-sde-X}
      \limsup_{T\rightarrow\infty}\Big(\sup_{t\in[0,T]}\sup_{x\in\mathcal{X}}\frac{1}{T}|\psi_t(x)|\Big)\leq \kappa^* \quad a.s.,
\end{align}
where 
\begin{align*} 
 \kappa^* = & C_{p,\rho,d}\Big(K_2+\Vert b\Vert_{\tilde L_p}^2 \frac{ K_2}{K_1^2}+{\Vert \nabla \sigma\Vert_{\tilde L_\oops}^2}\Big)\nonumber\\&
 \quad\quad\Big[\Big(\frac{K_2}{K_1}\Big)^{\frac{16d^3}{(1-d/(p\wedge\rho))(1-d/\rho)}}
     +\Big(\frac{\Vert b\Vert_{\tilde L_p}}{K_1}\Big)^{\frac{32d^2}{1-d/(p\wedge\rho)}}
  +\Big(\frac{\Vert\nabla\sigma\Vert^2_{L_\oops}}{K_1}\Big)^{\frac{32d^3}{(1-d/(p\wedge\rho))(1-d/\rho)}}\Big].
\end{align*}
\end{theorem}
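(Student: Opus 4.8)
The plan is to deduce Theorem~\ref{2-point-sde} from \cref{asy-general}. Since $\psi_t(x)=\phi_{0,t}(x)$ is generated by a flow, \cref{oneminusdelta} lets us take the box dimension $\Delta=d-1$, so it suffices to verify the two hypotheses \eqref{H} (with exponent $\alpha$ and constant $c_1$) and \eqref{abschae} (with constants $c_2,c_3$) with all constants controlled explicitly in terms of $K_1,K_2,\Vert b\Vert_{\tilde L_p}$ and $\Vert\nabla\sigma\Vert_{\tilde L_\rho}$; substituting these into the formula for $\kappa$ in \cref{asy-general}---both branches are needed, since with $\Delta=d-1$ the dichotomy $\tfrac{d}{d-\Delta}<\alpha+1$ becomes $d<\alpha+1$---and simplifying produces $\kappa^*$. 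Both hypotheses will be obtained from a Zvonkin transformation of \eqref{sde0}.

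\textbf{Two-point estimate.} For a parameter $\lambda>0$ let $u\in\tilde H^{2,p}(\R^d;\R^d)$ solve the elliptic equation $\lambda u-\tfrac12 a_{ij}\partial_{ij}u-b\cdot\nabla u=b$, using the constant-explicit elliptic estimates of \cref{A} (the analogue of \cref{aprior-est-local} carrying a first-order term) to bound $\Vert u\Vert_\infty,\Vert\nabla u\Vert_\infty,\Vert\nabla^2u\Vert_{\tilde L_p}$. Choosing $\lambda$ of the order of the threshold appearing in \eqref{U2pb} forces $\Vert\nabla u\Vert_\infty\le\tfrac12$, so $\Phi:=\mathrm{id}+u$ is a $C^1$-diffeomorphism with $\Vert\nabla\Phi^{\pm1}\Vert_\infty\le2$. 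By It\^o's formula $Y^i_t:=\Phi(\psi_t(y_i))$ solves an equation of type \eqref{SDEY} with $\tilde b=(\lambda u)\circ\Phi^{-1}$ and $\tilde\sigma=((\nabla\Phi)\sigma)\circ\Phi^{-1}$; changing variables through $\Phi^{-1}$ one checks \cref{ass-transformed} and bounds $\Vert\tilde b\Vert_\infty,\Vert\nabla\tilde b\Vert_{\tilde L_p},\Vert\tilde\sigma\Vert_\infty,\Vert\nabla\tilde\sigma\Vert_{\tilde L_\rho}$ and the ellipticity constants $\tilde K_1\asymp K_1,\tilde K_2\asymp K_2$ of $\tilde a=\tilde\sigma\tilde\sigma^*$ explicitly. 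Feeding these into \cref{Transformed-two-point} gives, for every $r\ge1$, $\mE[\sup_{t\le T}|Y^1_t-Y^2_t|^r]\le\kappa_0|y_1-y_2|^r\exp(\kappa_1 r^4 B\,T)$ with $B$ the bracket in \eqref{rho-zero-Y-sobolev} rewritten via those bounds. Taking $r$-th roots and using $|\psi_t^1-\psi_t^2|\le2|Y^1_t-Y^2_t|$ and $|y_1-y_2|\le2|x_1-x_2|$ yields \eqref{H} for $\psi$ with $\alpha=3$, $c_1=\kappa_1 B$, and $c(r)$ bounded uniformly in $r\ge1$.

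\textbf{Absorption estimate.} Fix a bounded $S\subset\R^d$ and $x\in S$, and remove the drift by Girsanov: under $\dd\mQ=Z_T\,\dd\mP$ with $Z_T=\exp(-\!\int_0^T(\sigma^{-1}b)(\psi_s(x))\cdot\dd W_s-\tfrac12\!\int_0^T|\sigma^{-1}b|^2(\psi_s(x))\,\dd s)$, the process $\psi_\cdot(x)$ is a driftless diffusion with bounded uniformly elliptic diffusion matrix, for which the supermartingale $\exp(\theta\langle v,\psi_t(x)-x\rangle-\tfrac{\theta^2}{2}\langle\cdot\rangle_t)$ gives the sub-Gaussian bound $\mQ(\sup_{t\le T}|\psi_t(x)|\ge kT)\le C_d\exp(-c k^2T/K_2)$ once $kT\ge2\sup_S|\cdot|$. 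Splitting $\mP(\cdot)\le(\mE^\mQ[Z_T^{-2}])^{1/2}(\mQ(\cdot))^{1/2}$ and peeling off a genuine exponential supermartingale from $Z_T^{-2}$ (written in the $\mQ$-Brownian motion) by a further Cauchy--Schwarz, the surviving factor is $\mE^\mQ\exp(6\!\int_0^T|\sigma^{-1}b|^2(\psi_s(x))\,\dd s)$, which is controlled by \cref{est-exp} applied to $f=|\sigma^{-1}b|^2\le|b|^2/K_1\in\tilde L_{p/2}$ (legitimate since $p>2d$). This gives \eqref{abschae} with $c_2$ of order $1/K_2$ and $c_3$ written explicitly through $C_{\mathrm{Kry}}(p/2)$, $\Gamma$ and $\Vert b\Vert_{\tilde L_p}/K_1$. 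With \eqref{H} and \eqref{abschae} established, \cref{asy-general} with $\Delta=d-1$ applies, and collecting the constants yields \eqref{upper-bound-sde-X} with the stated $\kappa^*$.

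\textbf{Main obstacle.} The delicate part is the quantitative bookkeeping in the Zvonkin step: bounding the localized $\tilde L_p$-, $\tilde L_\rho$- and $L_\infty$-norms of $\tilde b,\tilde\sigma,\nabla\tilde b,\nabla\tilde\sigma$ and the ellipticity constants of $\tilde a$ after composition with $\Phi^{-1}$ forces a careful change of variables (relocating the support of $\xi_\delta^z\circ\Phi$ and invoking comparability of the $\tilde L_p$-norms under bi-Lipschitz maps) and relies on the constant-explicit elliptic estimates of \cref{A}; choosing $\lambda$ so that $\Phi$ is bi-Lipschitz while keeping all resulting powers as sharp as in the statement is where the exponents $\tfrac1{1-d/(p\wedge\rho)}$ and $\tfrac1{1-d/\rho}$ in $\kappa^*$ are produced, via Sobolev embedding and the dependence of the elliptic estimates on the H\"older modulus \eqref{Holder-a} of $a$.
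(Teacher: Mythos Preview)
Your proposal is correct and follows essentially the same route as the paper: Zvonkin transformation $\Phi=\mathrm{id}+U$ to reduce to an SDE with coefficients $\tilde b,\tilde\sigma$ satisfying \cref{ass-transformed}, then \cref{Transformed-two-point} to obtain \eqref{H} with $\alpha=3$; Girsanov plus Khasminskii (\cref{est-exp} with $f=|\sigma^{-1}b|^2\in\tilde L_{p/2}$) for the one-point bound \eqref{abschae}; and finally \cref{asy-general} with $\Delta=d-1$. The only cosmetic difference is that the paper runs Girsanov in the opposite direction (it introduces the driftless process $\varphi$ under $\mP$ and writes $\mP(\sup|\psi_t(x)|\ge kT)=\mE[\rho_T\,;\,\sup|\varphi_t(x)|\ge kT]$), which is equivalent in law to your $\mQ$-argument; and rather than treating the two branches of $\kappa$ in \cref{asy-general} separately, the paper uses the crude uniform bound $\kappa\le C_{\alpha,d}\big((c_1+c_3)/c_2\big)^{1/2}$ before carrying out the algebra that produces the stated $\kappa^*$.
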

\begin{proof}
The idea is to apply \cref{asy-general}. All constants $C_1^*,...$ depend on $p,\oops,d$ only.\\ 

\noindent Step 1. We check the assumptions of Lemma \ref{2point}. \\
    
Since, by \eqref{Holder-a}, the map $x \mapsto a(x)=\sigma (x)\sigma^*(x)$  is $1-d/\rho$-H\"older continuous  and $\omega_{1-d/\rho}(a)\leq C_{\rho,d}\sqrt{K_2}\Vert\nabla\sigma\Vert_{\tilde L_\rho}$,  \cref{aprior-est-local}  and \cref{homo} 
show that there exists a constant $C_1^*$  such that for 
$$\lambda:=C_1^*K_1\Big(\frac{K_2^2}{K_1^2}\big(\frac{K_1+\sqrt{K_2}\Vert\nabla\sigma\Vert_{\tilde L_\rho}}{K_1}\big)^{\frac{2}{1-d/\rho}}+\big(\frac{K_1+\sqrt{K_2}\Vert\nabla\sigma\Vert_{\tilde L_\oops}}{K_1}\big)^{\frac{2d}{(1-d/\rho)(1-d/p)}}\big(\frac{\Vert  b\Vert_{\tilde L_{p}}}{K_1}\big)^{\frac{2}{1-d/p}}\Big),
$$ the equation 
$$
\frac{1}{2}a_{ij}\partial_{ij}^2u^{(l)}+b\cdot\nabla u^{(l)}-\lambda u^{(l)}=-b^{(l)}, \quad l=1,\cdots,d,
$$
has a unique solution $U:=(u^{(l)})_{1\leq l\leq d}$, $u^{(l)}\in\tilde H^{2,p}$ and 
\begin{align}\label{pdephi}
\Phi(x):=x+U(x)\quad \text{ for }x\in\mathbb{R}^{d} 
\end{align} 
is  a $C^1$-diffeomorphism on $\mathbb{R}^d$ (see also \cite{Zhangzhao2018}).  Let 
 $ \Psi:=(\Phi)^{-1}.$
Then, by the generalized It\^o's formula (\cite{XXZZ}), $Y_t:=\Phi(\psi_t(x))$ satisfies the following equation
 \begin{align}\label{SDEY1}\dd Y_t=\tilde b(Y_t)\,\dd t+\tilde\sigma(Y_t)\,\dd  W_t, \quad Y_0=y\in\mathbb{R}^d
  \end{align}
  with
  $$\tilde b(x):=\lambda U(\Psi(x)),\quad\tilde\sigma(x):=[\nabla\Phi\cdot\sigma]\circ({\Psi}(x)),\quad y=\Phi(x).$$
From \cite[(4.5)]{XXZZ} we know that 
\begin{align}\label{u-infty}
\Vert U\Vert_\infty<\frac{1}{2},\quad \Vert \nabla U\Vert_\infty<\frac{1}{2}.    
\end{align}
Furthermore, by \eqref{gradu-infty} and \eqref{aprior-estimate-pde} we have 
\begin{align}\label{b-sigma}
 \Vert\nabla U\Vert_{\tilde L_{p}}\leq \frac{1}{2}\big(\frac{K_1}{\lambda}\big)^\frac{d}{2p}\leq\frac{1}{2},\quad \Vert U\Vert_{\tilde L_{p}}\leq \frac{1}{2}\big(\frac{K_1}{\lambda}\big)^\frac{1-d/p}{2}\leq \frac{1}{2},\nonumber\\\Vert\nabla^2 U\Vert_{\tilde L_p}\leq C_2^*\frac{1}{K_1}\big(1+\frac{\sqrt{K_2}\Vert\nabla\sigma\Vert_{\tilde L_\oops}}{K_1}\big)^{\frac{d}{(1-d/\rho)}}\Vert b\Vert_{\tilde L_p}.
\end{align}
Hence, by \eqref{u-infty} (see also e.g. \cite[p. 15]{XXZZ}),
$$\frac{1}{2}\leq|\nabla\Phi|=|\mathbb{I}+\nabla U|\leq \frac{3}{2},\quad |\nabla \Psi|\leq 2$$
which implies that for all $x\in\mathbb{R}^d$,
\begin{align}
    \label{k-sigma} \frac{1}{4} K_1|\xi|^2\leq\langle\tilde \sigma\tilde\sigma^*(x)\xi,\xi\rangle\leq \frac{9}{4}K_2|\xi|^2,\quad \forall \xi\in\mathbb{R}^d,
\end{align}
and
\begin{align}\label{tilde-b}
    \Vert\tilde b\Vert_\infty&\leq \lambda\Vert  U\Vert_{\infty}\leq \frac{1}{2}\lambda, \quad
    \Vert\tilde b\Vert_{\tilde L_p}\leq \lambda\Vert  U\Vert_{\tilde L_p} \leq \frac{1}{2}\lambda,\quad
   \nonumber\\ \Vert\nabla\tilde b\Vert_{\tilde L_p}&\leq \lambda \Vert \text{det}(\nabla\Phi)\Vert_\infty^{\frac{1}{p}}\Vert \nabla U\Vert_{\tilde L_p}\leq \lambda.
\end{align}
Moreover for $p'=\min(p,\rho)$ we have by embedding
\begin{align}\label{tilde-sigma}
      \Vert\nabla\tilde \sigma\Vert_{\tilde L_{p'}}&=\Vert\Big((\nabla^2\Phi\cdot \sigma+\nabla\Phi\nabla\sigma)\nabla\Psi\Big)\circ\Psi\Vert_{\tilde L_{p'}}
      \nonumber\\&\leq \Vert\Big((\nabla^2\Phi\cdot \sigma)\nabla\Psi\Big)\circ\Psi\Vert_{\tilde L_{p}}+\Vert\Big((\nabla\Phi\nabla\sigma)\nabla\Psi\Big)\circ\Psi\Vert_{\tilde L_{\rho}}
      \nonumber\\&\leq 2\Vert \text{det}(\nabla\Phi)\Vert_\infty^{\frac{1}{p\wedge\rho}}(\sqrt{K_2}\Vert\nabla^2\Phi\Vert_{\tilde L_p}+\Vert\nabla\Phi\cdot\nabla\Psi\Vert_{\infty}\Vert\nabla\sigma\Vert_{\tilde L_\rho})
     \nonumber \\&\leq 9C_2^*\frac{\sqrt{K_2}}{K_1}\big(1+\frac{\sqrt{K_2}\Vert\nabla\sigma\Vert_{\tilde L_\oops}}{K_1}\big)^{\frac{d}{(1-d/\rho)}}\Vert b\Vert_{\tilde L_p}+9\Vert \nabla\sigma\Vert_{\tilde L_\rho}.
\end{align}
If $(\phi_t(x))_{t\geq0}$ is the flow generated by the solution to  \eqref{SDEY1}, then by definition of $\Phi(\psi_t(x))$ from \eqref{pdephi} and the fact that $U$ is uniformly bounded from \eqref{u-infty}, we get that
\begin{align*}
   \limsup_{T\rightarrow\infty}\Big(\sup_{t\in[0,T]}\sup_{x\in\mathcal{X}}\frac{1}{T}|\psi_t(x)|\Big)=\limsup_{T\rightarrow\infty}\Big(\sup_{t\in[0,T]}\sup_{x\in\mathcal{X}}\frac{1}{T}|\phi_t(x)|\Big).
\end{align*}
Using  the estimates \eqref{k-sigma}, \eqref{tilde-b} and \eqref{tilde-sigma} we will establish \eqref{est-Y-sobolev} for $Y$. Indeed, let  $\tilde K_1:=\frac{1}{4}K_1$ and $\tilde K_2=\frac{9}{4}K_2$ in \cref{ass-transformed}.
Then  we define
\begin{align}\label{tildeGamma}
   \tilde \Gamma:=&\Big(\big(\frac{\tilde K_2}{\tilde K_1}\big)^{\frac{4d^2}{1-d/p'}}+\big(\frac{\Vert\tilde \nabla\sigma\Vert_{\tilde L_{p'}}^2}{\tilde K_1}\big)^{\frac{4d^2}{1-d/p'}}+\big(\frac{\Vert  \tilde b\Vert_{\tilde L_{p}}}{\tilde K_1}\big)^{\frac{4d}{1-d/p}}\Big)
  \nonumber \\\leq& C_{p,\rho,d}\Big(\big(\frac{K_2}{K_1}\big)^{\frac{4d^2}{1-d/(p\wedge\rho)}}+\big(\frac{K_2}{K_1}\frac{\Vert b\Vert_{\tilde L_p}^2}{K_1^2}(1+\frac{\sqrt{K_2}\Vert\nabla\sigma\Vert_{\tilde L_\rho}}{K_1})^{\frac{2d}{1-d/\rho}}\big)^{\frac{4d^2}{1-d/(p\wedge\rho)}}+(\frac{\lambda}{K_1})^{\frac{4d}{p-d}}\Big)
  \nonumber\\\leq& C_{p,\rho,d}\Big(\big(\frac{K_2}{K_1}\big)^{\frac{8d^3}{(1-d/(p\wedge\rho))(1-d/\rho)}}+\big(\frac{\Vert b\Vert_{\tilde L_p}}{K_1}\big)^{\frac{16d^2}{1-d/(p\wedge\rho)}}
 +\big(\frac{\Vert\nabla\sigma\Vert^2}{K_1}\big)^{\frac{16d^3}{(1-d/(p\wedge\rho))(1-d/\rho)}}\Big). 
\end{align}
Using \cref{Transformed-two-point} and the fact that  $|\nabla\Psi|\leq 2$ together with \eqref{k-sigma}, \eqref{tilde-b} and \eqref{tilde-sigma},  for the flows correspondingly $\psi_t^1(x_1),\psi_t^2(x_2)$ generated by the solutions $X_t^1(x_1)$, $X_t^1(x_2)$ to \eqref{sde0} we get
\begin{align}\label{est:exp-X}
  \mE[\sup_{t\in[0,T]}|\psi_t^1(x_1)-\psi_t^2(x_2)|^r]
  &= \mE[\sup_{t\in[0,T]}|\Psi(Y_t^1(y_1))-\Psi(Y_t^2(y_2))|^r]
\nonumber\\&\leq 2^r\mE[\sup_{t\in[0,T]}|Y_t^1(y_1)-Y_t^2(y_2)|^r]\leq 2^r C_4^*|y_1-y_2|^r\exp(C_3^*T\varrho)
\end{align}
with
\begin{align}\label{rho-zero-X-sobolev}
    \varrho:=&r^4\Big [ \lambda+K_2
+ \tilde\Gamma\lambda+(\tilde\Gamma\lambda)^2K_2^{-1} + \tilde\Gamma^2(\frac{K_2}{K_1^2}\Vert b\Vert_{\tilde L_p}^2+\Vert \nabla\sigma\Vert_{\tilde L_\rho}^2)^2K_2^{-1}+\tilde\Gamma(\frac{K_2}{K_1^2}\Vert b\Vert_{\tilde L_p}^2+\Vert \nabla\sigma\Vert_{\tilde L_\rho}^2)\Big].
\end{align}  
\begin{itemize}
    \item [Step 2.] Verification of estimate  \eqref{abschae} in  \cref{asy-general}.
\end{itemize}

Let $$\rho_t:=\exp\Big(\int_0^tb^*(\sigma^{-1})^*(\varphi_r(x))\dd W_r-\frac{1}{2}\int_0^tb^*(\sigma\sigma^*)^{-1}b(\varphi_r(x))\dd r\Big),$$
where $\varphi_t(x)$ is the flow generated by the solution to
\begin{align*}
   \dd \varphi_t=\sigma(\varphi_t)\dd W_t,\quad \varphi_0(x)=x\in\mR^d.
\end{align*}
It follows from \eqref{Khas-Kry-gener1} that, for  any $\beta>0$, 
\begin{align}\label{est:girsanov-novikov}
    \mE&\exp\Big(\beta\int_0^Tb^*(\sigma\sigma^*)^{-1}b(\varphi_r(x))\dd r\Big)
 \leq 2\exp\Big(TC_5^*\big((K_1^2K_2)^{-1}{(\Gamma'\beta)^2\Vert b\Vert_{\tilde L_p}^4}+\Gamma'\beta K_1^{-1}\Vert b\Vert_{\tilde L_p}^2\big)\Big)
\end{align}
where 
\begin{align}
    \label{gamma-prime}\Gamma'=\big(\frac{K_2}{K_1}\big)^{\frac{4d^2}{1-d/\rho}}+\big(\frac{\Vert\nabla\sigma\Vert_{\tilde L_\oops}^2}{K_1}\big)^{\frac{4d^2}{1-d/\rho}}.
\end{align} 
Therefore $(\rho_t)_{t\geq0}$ is a martingale. Let $\mP^\rho:=\rho_T\mP$. By Girsanov's theorem and H\"older's inequality,
\begin{align*}
\mP\Big(\sup_{0\leq t\leq T}|\psi_t(x)-x|\geq kT\Big)
  =&\mP^\rho\Big(\sup_{0\leq t\leq T}|\varphi_t(x)-x|\geq kT\Big)\\=&\mE[\rho_T\mathbb{I}_{\left\{\sup_{0\leq t\leq T}|\varphi_t(x)-x|\geq kT\right\}}]
\\\leq & [\mE\rho_T^2]^\frac{1}{2}\mP[\sup_{0\leq t\leq T}|\varphi_t(x)-x|\geq kT]^\frac{1}{2}  .
\end{align*}
Applying Markov's inequality we obtain, for each $x \in \R^d$ and $\zeta\geq 0$,
 \begin{align}\label{one-point}
     \mP\Big(\sup_{0\leq t\leq T}|\varphi_t(x)-x|\geq kT\Big)^{1/2}\leq e^{-\frac{1}{2}\zeta kT}\Big[\mE\exp\Big(\zeta \sup_{0\leq t\leq T}\Big|\int_0^t\sigma(\varphi_r(x))\dd W_r\Big|\Big)\Big]^\frac{1}{2}.
 \end{align}
  \eqref{est:girsanov-novikov} shows 
 \begin{align*}
     \Big[\mE\rho_T^2\Big]^{1/2}&=\Big[ \mE\exp\Big(2\int_0^Tb^*(\sigma^{-1})^*(\varphi_r(x))\dd W_r-2\int_0^Tb^*(\sigma\sigma^*)^{-1}b(\varphi_r(x))\dd r\nonumber
   \\&\quad\quad \quad\quad +\int_0^Tb^*(\sigma\sigma^*)^{-1}b(\varphi_r(x))\dd r\Big)\Big]^{1/2}
     \\&\leq\Big(\mE\Big[\exp\Big(2\int_0^Tb^*(\sigma^{-1})^*(\varphi_r(x))\dd W_r-2\int_0^Tb^*(\sigma\sigma^*)^{-1}b(\varphi_r(x))\dd r\Big]^2\Big)^{1/4}\nonumber\\&\quad \Big[\mE\exp\Big(\int_0^t2b^*(\sigma\sigma^*)^{-1}b(\varphi_r(x))\dd r\Big)\Big]^{1/4}
    \\&\leq \Big[\mE\exp\Big(2\int_0^Tb^*(\sigma\sigma^*)^{-1}b(\varphi_r(x))\dd r\Big)\Big]^{1/4}
 \nonumber \\&\leq   2\exp\Big(C_5^*T\big((K_1^2K_2)^{-1}{\Gamma'^2\Vert b\Vert_{\tilde L_p}^4}+K_1^{-1}\Gamma'\Vert b\Vert_{\tilde L_p}^2\big)\Big)
      =:2\exp(T\kappa_1)
 \end{align*}
 and by time change $\int_0^t\sigma(\varphi_r(r))\dd W_r=W_{\int_0^t|\sigma(\varphi_r(x))|^2\dd r}$, 
 we also have
 \begin{align*}
   \Big[\mE\exp\Big(2\zeta\sup_{0\leq t\leq T}\Big|\int_0^t\sigma(\varphi_r(x))\dd W_r\Big|\Big)\Big]^{1/2}  \leq \sqrt{2} \exp(C_d\zeta^2 \Vert \sigma\Vert_\infty^2 T)=:\sqrt{2}\exp(T\zeta^2 \kappa_2).
 \end{align*}
 Inserting these estimate into \eqref{one-point}  and optimizing over $\zeta\ge 0$ yields, for any $k>0$,
\begin{align}\label{one-point-X}
   \mP\Big(\sup_{0\leq t\leq T}|\psi_t(x)-x|\geq kT\Big) \nonumber &\leq 2\exp\Big(C_6^*T{\big(\kappa_1+\kappa_2\zeta^2 -\zeta k\big)}\Big)
   \nonumber \\ &\leq 2\exp\Big(C_7^*T{\big(-\frac{1}{4\kappa_2}k^2+\kappa_1\big)}\Big).
\end{align}

With estimates \eqref{est:exp-X} and \eqref{one-point-X} at hand we are ready to apply \cref{asy-general} by taking 
\begin{align}\label{c0c1c2}
  c_1:&= \lambda+K_2
+\tilde\Gamma\lambda+ (\tilde\Gamma\lambda)^2K_2^{-1} + \tilde\Gamma^2(\frac{K_2}{K_1^2}\Vert b\Vert_{\tilde L_p}^2+\Vert \nabla\sigma\Vert_{\tilde L_\rho}^2)^2K_2^{-1}+ \tilde\Gamma(\frac{K_2}{K_1^2}\Vert b\Vert_{\tilde L_p}^2+\Vert \nabla\sigma\Vert_{\tilde L_\rho}^2),\nonumber\\
 c_2:&=\frac{1}{4\Vert\sigma\Vert_\infty^2},
\quad \quad
c_3:=C_7^*(K_1^2K_2)^{-1}{\Gamma'}^2{\Vert b\Vert_{\tilde L_p}^4}+K_1^{-1}\Gamma'\Vert b\Vert_{\tilde L_p}^2,\quad \alpha:= 3,
\end{align}
with $\tilde \Gamma$ from \eqref{tildeGamma} and $\Gamma'$ from \eqref{gamma-prime}.
 Note that we can take $\Delta=d-1$ by  Remark \ref{oneminusdelta}.   The linear expansion rate $\kappa$ can now be estimated as follows (no matter which of the two cases in the definition of $\kappa$ in \cref{asy-general} applies): 
 \begin{align}\label{kapparoh}
     \kappa&\leq C_{\alpha,d} \Big(\frac{c_1+c_3}{c_2}\Big)^{1/2}\nonumber
     \\&\leq C_{p,\rho,d} \Vert\sigma\Vert_\infty\Big(\sqrt{\lambda}+\sqrt{K_2}+\sqrt{\tilde\Gamma\lambda}+\tilde\Gamma \lambda K_2^{-1/2}+\tilde{\Gamma}\big(K_2K_1^{-2}\Vert b\Vert_{\tilde L_p}^2+\Vert \nabla\sigma\Vert_{\tilde L_\rho}^2\big)K_2^{-1/2}\nonumber
     \\&\quad\quad\quad\quad\quad\quad\quad+\sqrt{\tilde{\Gamma}}\big(\sqrt{K_2}K_1^{-1}\Vert b\Vert_{\tilde L_p}+\Vert \nabla\sigma\Vert_{\tilde L_\rho}\big)+(K_1^2K_2)^{-1/2}\Gamma'{\Vert b\Vert_{\tilde L_p}^2}+(K_1^{-1}\Gamma')^{1/2}\Vert b\Vert_{\tilde L_p}\Big)
     \nonumber
     \\&\leq C_{p,\rho,d} \sqrt{K_2}\Big(\sqrt{K_1}+\sqrt{K_2}+\frac{K_1}{\sqrt{K_2}}+\Vert b\Vert_{\tilde L_p}^2(\frac{\sqrt{K_2}}{K_1^2}+\frac{1}{K_1\sqrt{K_2}})+\frac{\sqrt{K_2}+\sqrt{K_1}}{K_1}\Vert b\Vert_{\tilde L_p}+\Vert \nabla\sigma\Vert_{\tilde L_\rho}
     \nonumber\\&\quad\quad\quad\quad\quad+\frac{\Vert \nabla \sigma\Vert_{\tilde L_\rho}^2}{\sqrt{K_2}}\Big)\Big[\Big(\frac{K_2}{K_1}\Big)^{\frac{16d^3}{(1-d/(p\wedge\rho))(1-d/\rho)}}+\big(\frac{\Vert b\Vert_{\tilde L_p}}{K_1}\big)^{\frac{32d^2}{1-d/(p\wedge\rho)}}+\big(\frac{\Vert b\Vert_{\tilde L_p}}{K_1}\big)^{\frac{8}{1-d/p}}
  \nonumber\\&\quad\quad\quad\quad\quad+\big(\frac{\Vert\nabla\sigma\Vert_{\tilde L_\rho}^2}{K_1}\big)^{\frac{32d^3}{(1-d/(p\wedge\rho))(1-d/\rho)}}+(\frac{\Vert\nabla\sigma\Vert_{\tilde L_\rho}^2}{K_1}\big)^{\frac{8d}{(1-d/p)(1-d/\rho)}}+\big(\frac{\Vert\nabla\sigma\Vert_{\tilde L_\rho}^2}{K_1}\big)^{\frac{8}{1-d/\rho}}\Big] \nonumber
     \\&\leq C_{p,\rho,d} \Big(K_2+\Vert b\Vert_{\tilde L_p}^2 \frac{ K_2}{K_1^2}+{\Vert \nabla \sigma\Vert_{\tilde L_\oops}^2}\Big)\nonumber
     \\&\quad\quad\quad\quad\quad\Big[\Big(\frac{K_2}{K_1}\Big)^{\frac{16d^3}{(1-d/(p\wedge\rho))(1-d/\rho)}}
     +\Big(\frac{\Vert b\Vert_{\tilde L_p}}{K_1}\Big)^{\frac{32d^2}{1-d/(p\wedge\rho)}}
  +\Big(\frac{\Vert\nabla\sigma\Vert^2_{L_\oops}}{K_1}\Big)^{\frac{32d^3}{(1-d/(p\wedge\rho))(1-d/\rho)}}\Big].
 \end{align}
In the last inequality we used  that 
$\max(\frac{32d^2}{1-d/(p\wedge\rho)},\frac{8}{1-d/p})\leq \frac{32d^2}{1-d/(p\wedge\rho)},$
 and $\max({\frac{32d^3}{(1-d/(p\wedge\rho))(1-d/\rho)}},$ ${\frac{8d}{(1-d/p)(1-d/\rho)}},\frac{8}{1-d/\rho})\leq {\frac{32d^3}{(1-d/(p\wedge\rho))(1-d/\rho)}}$.  In the end we get \eqref{upper-bound-sde-X}.
\end{proof}
As a by-product from the proof of \cref{2-point-sde} we also have
\begin{proposition}\label{two-point-prop}
Let $(\psi_t(x))_{t\geq0}$ denote the flow generated by the solution to  \eqref{sde0}.  Let $\chi_T$ be cubes of   $\mathbb{R}^d$  with side length $\exp(-\gamma T)$, $\gamma>0$. If \cref{Ass} holds then  for any $k>0$
 \begin{align*}
     \limsup_{T\rightarrow\infty}\frac{1}{T}\sup_{\chi_T}\log\mP\Big(\sup_{x,y\in \chi_T}\sup_{0\leq t\leq T}|\psi_t(x)-\psi_t(y)|\geq k\Big) \leq -I(\gamma)
 \end{align*}
 where
 \begin{align}\begin{aligned}
    \label{I-X}
     I(\gamma):=\left\{\begin{array}{rcl} 
         \gamma^{1+1/\alpha} \alpha (1+\alpha)^{-1-1/\alpha}c_1^{-1/\alpha} \quad\quad\quad \text{     if } &\gamma\geq  c_1(\alpha+1)d^\alpha\\
         d(\gamma-c_1d^\alpha) \quad\quad\quad \text{    if }& c_1d^\alpha<\gamma\leq  c_1(\alpha+1)d^\alpha \\
     0  \quad\quad\quad   \text{     if } &\gamma\leq  c_1d^\alpha. 
    \end{array}\right.
\end{aligned}\end{align}
with $\alpha$ and $c_1$  as in \eqref{c0c1c2}.
\end{proposition}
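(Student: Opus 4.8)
The plan is to read off \cref{two-point-prop} directly from the two-point moment estimate already obtained in Step~1 of the proof of \cref{2-point-sde}, combined with \cref{2point}; essentially no new work is required, which is why the statement is flagged as a by-product.

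First I would recall the relevant output of that proof. Via the Zvonkin-type $C^1$-diffeomorphism $\Phi(x)=x+U(x)$ of \eqref{pdephi} one conjugates the SDE \eqref{sde0} into the SDE \eqref{SDEY1}, whose coefficients satisfy \cref{ass-transformed}; applying \cref{Transformed-two-point} to the transformed flow and transporting the estimate back through $\Psi=\Phi^{-1}$ yields \eqref{est:exp-X}, namely
\begin{align*}
  \mE\Big[\sup_{t\in[0,T]}|\psi_t(x_1)-\psi_t(x_2)|^r\Big]\leq 2^r C_4^*\,|y_1-y_2|^r\,\exp(C_3^* T\varrho),\qquad y_i=\Phi(x_i),
\end{align*}
valid for all $r\geq1$, $T\geq0$ and $x_1,x_2\in\mR^d$, where by \eqref{rho-zero-X-sobolev} one has $\varrho=r^4 c_1$ with $c_1$ precisely the constant appearing in \eqref{c0c1c2}.

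Next I would put this into the shape required by \cref{2point}. Since $\tfrac12\leq|\nabla\Phi|\leq\tfrac32$ by \eqref{u-infty}, we have $|y_1-y_2|=|\Phi(x_1)-\Phi(x_2)|\leq\tfrac32|x_1-x_2|$; taking $r$-th roots in the displayed bound and using $\varrho/r=r^3 c_1$ gives, for every $r>d$,
\begin{align*}
  \Big(\mE\sup_{t\in[0,T]}|\psi_t(x_1)-\psi_t(x_2)|^r\Big)^{1/r}\leq 3\,(C_4^*)^{1/r}\,|x_1-x_2|\,\e^{\,C_3^* c_1 r^3 T}.
\end{align*}
This is exactly hypothesis \eqref{H} of \cref{2point} with exponent $\alpha=3$ and with the constant there equal to $C_3^* c_1$ (hence, after absorbing the fixed factor $C_3^*$ into the constant, the $c_1$ of \eqref{c0c1c2}); the prefactor $c(r)=3(C_4^*)^{1/r}$ depends on $r$ only, as permitted.

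Finally I would invoke \cref{2point}: its conclusion \eqref{Asy}, read with the threshold $u=k>0$ arbitrary, is precisely the asserted estimate
\begin{align*}
  \limsup_{T\to\infty}\frac1T\sup_{\chi_T}\log\mP\Big(\sup_{x,y\in\chi_T}\sup_{0\leq t\leq T}|\psi_t(x)-\psi_t(y)|\geq k\Big)\leq -I(\gamma),
\end{align*}
with $\chi_T$ ranging over all cubes of side length $\e^{-\gamma T}$ and $I$ the function \eqref{I} built from $\alpha=3$ and $c_1$ as in \eqref{c0c1c2}, which is exactly \eqref{I-X}. I do not expect a genuine obstacle here; the only point that needs care is the bookkeeping of constants — verifying that the $\alpha$ and $c_1$ one feeds into \cref{2point} coincide with those named in \eqref{c0c1c2}, and that \eqref{est:exp-X} holds uniformly in the base points and in $T$ — and all of this is already guaranteed by \cref{Transformed-two-point}.
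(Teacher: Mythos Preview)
Your proposal is correct and follows the paper's own approach exactly: the paper's proof consists of the single sentence ``This follows easily from \eqref{rho-zero-X-sobolev} and \cref{2point},'' and you have simply unpacked that sentence, including the translation of \eqref{est:exp-X} into the hypothesis \eqref{H} with $\alpha=3$. Your observation about absorbing $C_3^*$ into $c_1$ is the one bookkeeping point worth noting, and the paper handles it in the same implicit way when it records the constants in \eqref{c0c1c2}.
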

\begin{proof}
This  follows easily from \eqref{rho-zero-X-sobolev} and \cref{2point}.
\end{proof}
 \section{Existence of random attractors to SDEs with singular drift}\label{section6}
 Inspired by the work \cite{DS}, we are interested in the question whether there exists a random attractor of the RDS generated by the solution to the singular SDE. We start with estimates of the one-point motion (items 1-5 of the following lemma) and then move to estimates for the dispersion of sets (items 6 and 7). 
 \begin{lemma}\label{lemm-mul}
Let \cref{Ass} hold. Further assume that there exist vector fields $b_1$ and $ b_2$ such that $b=b_1+b_2$ with $b_1\in \tilde L_p(\mR^d)$. Let $(\psi_t(x))_{t\geq0}$ be the flow generated by the solution to  \eqref{sde0}. Let $\Gamma:=C_{\mathrm{Kry}}(\frac{p}{2})\Big(\big(\frac{K_2}{K_1}\big)^{\frac{4d^2}{1-d/\rho}}+\big(\frac{\Vert\nabla\sigma\Vert_{\tilde L_\oops}^2}{K_1}\big)^{\frac{4d^2}{1-d/\rho}}+\big(\frac{\Vert b_2\Vert_{\tilde L_{p}}}{K_1}\big)^{\frac{4d}{1-d/p}}\Big)$  where  $C_{\mathrm{Kry}}(\frac{p}{2})$  is  from \eqref{est-Krlov} with $q=\frac{p}{2}$ depending on $p, \rho$ and $d$ only.  
\begin{itemize}
    \item[1.] Let $1\leq r$, and $r_1,r_2>r$. If $b_2$ satisfies \cref{Ass-onepoint} $(U^\beta)$ for some $\beta \in \R$, then, for each $|x|=r_2$,
    \begin{align*}
        \mP\Big(&|\psi_T(x)|\geq r_1,\inf_{0\leq t\leq T}|\psi_t(x)|\geq r\Big)
        \\&\leq  2 \exp\Big(T\frac{\Gamma^2\Vert b_1\Vert_{\tilde L_p}^4+K_2^2\Gamma\Vert b_1\Vert_{\tilde L_p}^2}{{K_1K_2^2}}-\frac{1}{4}\Big(-\frac{r_2-r_1}{\sqrt{K_2T}}-\frac{\sqrt{T}\beta^*(r)}{\sqrt{K_2}}\Big)_+^2\Big)
    \end{align*}
    with 
   \begin{align}
       \label{beta-up}
       \beta^*(r):=\sup_{|x|\geq r}\frac{x\cdot b_2(x)}{|x|}+(d-1)\frac{K_2}{2r}.
   \end{align}
 \item[2.] If  $b_2$ satisfies \cref{Ass-onepoint} $(U^\beta)$ for some  $\beta<0$ and  $r_0>1$ is such that $\beta^*(r_0)\leq 0$ where $\beta^*(r_0)$ is from \eqref{beta-up}, then for every $R\geq r\geq r_0$ and every $x\in\mR^d$, we have
\begin{align*}
    \mP\Big(|\psi_T(x)|\geq R,\inf_{0\leq t\leq T}|\psi_t(x)|\leq r\Big)\leq 4\exp\Big(T\frac{\Gamma^2\Vert b_1\Vert_{\tilde L_p}^4+K_2^2\Gamma\Vert b_1\Vert_{\tilde L_p}^2}{{K_1K_2^2}}-\frac{(R-r)^2}{16K_2T}\Big).
\end{align*}
\item[3.]If  $b_2$ satisfies \cref{Ass-onepoint} $(U^\beta)$ for some  $\beta<0$ and  $r_0>1$ such that $\beta^*(r_0)\leq 0$ where $\beta^*(r_0)$ is from \eqref{beta-up} and if $R\geq r_0$, then for every $|x|=R$, $\delta,\delta_1>0$, we have 
\begin{align*}
    \mP\Big(\sup_{0\leq s\leq \delta_1}|\psi_s(x)|\geq R+\delta\Big)\leq 6\exp\Big(T\frac{\Gamma^2\Vert b_1\Vert_{\tilde L_p}^4+K_2^2\Gamma\Vert b_1\Vert_{\tilde L_p}^2}{{K_1K_2^2}}-\frac{\delta^2}{16K_2\delta_1}\Big).
\end{align*}
    \item[4.] Let $1\leq r$, and $r_1,r_2>r$. If $b_2$ satisfies \cref{Ass-onepoint} $(U_\beta)$ for some $\beta \in \R$, then for each $|x|=r_1$, 
     \begin{align*}
         \mP\Big(|\psi_T(x)|\leq r_2,&\inf_{0\leq t\leq T}|\psi_t(x)|\geq r\Big)\\&\leq  2 \exp\Big(T\frac{\Gamma^2\Vert b_1\Vert_{\tilde L_p}^4+K_2^2\Gamma\Vert b_1\Vert_{\tilde L_p}^2}{{K_1K_2^2}}-\frac{1}{4}\Big(\frac{\sqrt{T}\beta_*(r)}{\sqrt{K_2}}-\frac{r_2-r_1}{\sqrt{K_2T}}\Big)_+^2\Big)
    \end{align*}
with
 \begin{align}
       \label{beta-down}
       \beta_*(r):=\inf_{|x|\geq r}\frac{x\cdot b_2(x)}{|x|}.
   \end{align}
\item[5.]  If $b_2$ satisfies \cref{Ass-onepoint} $(U_\beta)$ for some $\beta \in \R$, then for each $|x|=r_1$, for $1\leq r<r_1$
\begin{align*}
       \mP&\Big(\inf_{t\geq 0}|\psi_t(x)|\leq r\Big)\leq 2\exp\Big(T\frac{\Gamma^2\Vert b_1\Vert_{\tilde L_p}^4+K_2^2\Gamma\Vert b_1\Vert_{\tilde L_p}^2}{{K_1K_2^2}}-(r_1-r)\frac{\beta_*(r)}{K_2}\Big)
\end{align*}
with $\beta_*(r_1)$ defined as \eqref{beta-down}.
\item[6.] Assume that  $b_2$ satisfies \cref{Ass-onepoint} $(U_\beta)$ for $$\beta>\beta_0:= 4\frac{\Vert b_1\Vert_{\tilde L_p}^2\Gamma+K_2\Vert b_1\Vert_{\tilde L_p}\sqrt\Gamma}{{\sqrt{K_1K_2}}}.$$ Let $h:[1,\infty)\rightarrow[1,\infty)$ be strictly increasing such that $\lim_{x\rightarrow\infty}\frac{h(x)}{x}=0$ and $\lim_{x\rightarrow\infty}\frac{\log x}{h(x)}=0$. Let $\eta\in(0,\frac{1}{2})$ and $\gamma>0$ with $\eta+\gamma<\beta-\beta_0$. For $R>2$, define $T:=h(R)$, $r=(1-\eta)R$ and $r_1:=R+\gamma h(R)$. Then
\begin{align*}
    \limsup_{R\rightarrow\infty}\frac{1}{h(R)}\log\mP_{R}
    &:=\limsup_{R\rightarrow\infty}\frac{1}{h(R)}
    \log\mP\Big[\Big(B_{r_1}\nsubseteq\psi_T(B_{R})\Big)\cup\cup_{t\in[0,T]}\Big(B_{r}\nsubseteq\psi_t(B_{R})\Big)\Big]<0.
\end{align*}
\item[7.] Assume that $b_2$ satisfies \cref{Ass-onepoint} $(U^\beta)$ for $$\beta<-\beta_0:= -4\frac{\Vert b_1\Vert_{\tilde L_p}^2\Gamma+K_2\Vert b_1\Vert_{\tilde L_p}\sqrt\Gamma}{{\sqrt{K_1K_2}}}.$$ Let $h(R)=R^\iota$ for some $\iota\in(0,\frac{1}{3})$.  Let $\eta\in(0,\frac{1}{2})$ and $\gamma>0$ with $\eta+\gamma<-\beta-\beta_0$. For $R>2$, define $T:=h(R)$, $r=(1-\eta)R$ and $r_1:=R+\gamma h(R)$. Then
\begin{align*}
    \limsup_{R\rightarrow\infty}&\frac{1}{h(R)}\log\mP_{R}
    \\&:=\limsup_{R\rightarrow\infty}\frac{1}{h(R)}
    \log\mP\Big[\bigcup_{|x|=r_1}\Big((|\psi_T(x)|\geq R)\cap(\inf_{t\in[0,T]}|\psi_{t}(x)|\geq r)\Big)\Big]<0.
\end{align*}
\end{itemize}
 \end{lemma}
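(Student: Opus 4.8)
The plan is to combine the single–trajectory large–deviation bound of part~1 of this lemma with a chaining argument over the sphere $S_{r_{1}}:=\{x\in\R^{d}:|x|=r_{1}\}$; since part~7 concerns only trajectories issued from $S_{r_{1}}$, no topological reduction is needed (unlike part~6). Fix a starting point $x$ with $|x|=r_{1}=R+\gamma R^{\iota}$, and recall $T=h(R)=R^{\iota}$ and $r=(1-\eta)R$. The function $\beta^{*}$ of \eqref{beta-up} is non-increasing and, by $(U^{\beta})$, $\lim_{\rho\to\infty}\beta^{*}(\rho)=\limsup_{|y|\to\infty}\frac{y\cdot b_{2}(y)}{|y|}\le\beta<0$, so $\beta^{*}((1-\eta)R)\le\beta+\epsilon$ for every $\epsilon>0$ once $R$ is large. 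Applying part~1 with upper radius $R$, lower radius $r$ and starting radius $r_{1}$ (all admissibility conditions hold for large $R$), the bracket in the exponent becomes $\frac{\sqrt{T}}{\sqrt{K_{2}}}\big((-\beta)-\gamma-\epsilon\big)$, which is strictly positive since $\gamma<-\beta-\beta_{0}<-\beta$. Writing $\mathfrak c:=\frac{\Gamma^{2}\|b_{1}\|_{\tilde L_{p}}^{4}+K_{2}^{2}\Gamma\|b_{1}\|_{\tilde L_{p}}^{2}}{K_{1}K_{2}^{2}}$ for the time–rate constant of part~1, this gives $\mP\big(|\psi_{T}(x)|\ge R,\ \inf_{0\le t\le T}|\psi_{t}(x)|\ge r\big)\le 2\exp\big(-T\,\delta(\epsilon)\big)$ for $|x|=r_{1}$, where $\delta(\epsilon):=\frac{((-\beta)-\gamma-\epsilon)^{2}}{4K_{2}}-\mathfrak c$. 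The crucial point — and the only place $\beta<-\beta_{0}$ enters — is that $\delta(\epsilon)>0$ for small $\epsilon$: indeed $\sqrt{4K_{2}\mathfrak c}=\frac{2\sqrt{\Gamma^{2}\|b_{1}\|_{\tilde L_{p}}^{4}+K_{2}^{2}\Gamma\|b_{1}\|_{\tilde L_{p}}^{2}}}{\sqrt{K_{1}K_{2}}}\le\frac{2(\Gamma\|b_{1}\|_{\tilde L_{p}}^{2}+K_{2}\sqrt{\Gamma}\,\|b_{1}\|_{\tilde L_{p}})}{\sqrt{K_{1}K_{2}}}=\frac{\beta_{0}}{2}$, while $-\beta-\gamma>\beta_{0}\ge2\sqrt{4K_{2}\mathfrak c}$.

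To pass to all of $S_{r_{1}}$ I would chain. Cover $S_{r_{1}}$ by $N\le C_{d}(R/\ell)^{d-1}$ cubes $Q_{i}$ of side $\ell=\e^{-\gamma_{\ell}T}$ (with $\gamma_{\ell}>0$ to be chosen), each meeting $S_{r_{1}}$, and pick $x_{i}\in Q_{i}\cap S_{r_{1}}$; put $\Xi_{i}:=\sup_{x,y\in Q_{i}}\sup_{0\le t\le T}|\psi_{t}(x)-\psi_{t}(y)|$ and $u_{0}:=R^{\iota/2}$, so $u_{0}=o(T)$. If some $x\in S_{r_{1}}\cap Q_{i}$ belongs to the bad event then $|\psi_{T}(x_{i})|\ge R-\Xi_{i}$ and $\inf_{t\le T}|\psi_{t}(x_{i})|\ge r-\Xi_{i}$, hence
\[
\mP_{R}\ \le\ \sum_{i=1}^{N}\mP\big(|\psi_{T}(x_{i})|\ge R-u_{0},\ \inf_{t\le T}|\psi_{t}(x_{i})|\ge r-u_{0}\big)\ +\ \sum_{i=1}^{N}\mP\big(\Xi_{i}\ge u_{0}\big).
\]
Part~1 applies to each $x_{i}$ with the radii shifted by $u_{0}=o(T)$, which perturbs the rate only by $o(1)$, so the first sum is $\le 2N\exp(-\delta(2\epsilon)T)$. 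For the second sum, the flow $\psi$ satisfies hypothesis \eqref{H} with the constants $c_{1},\alpha$ from \eqref{c0c1c2} — this is exactly what \eqref{est:exp-X}–\eqref{rho-zero-X-sobolev} prove — so \eqref{two-point-est-process} applies cube by cube: $\mP(\Xi_{i}\ge u_{0})\le\tilde c\,\e^{c_{1}r_{*}^{\alpha+1}T}\ell^{r_{*}}u_{0}^{-r_{*}}$ for any moment order $r_{*}>d$, with $\tilde c=\tilde c(r_{*},d)$.

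Because $\log R$ and $\log u_{0}$ are $o(T)=o(R^{\iota})$, summing over the $N\le C_{d}(R/\ell)^{d-1}$ cubes yields
\[
\limsup_{R\to\infty}\tfrac{1}{h(R)}\log(\text{first sum})\le(d-1)\gamma_{\ell}-\delta,\qquad\limsup_{R\to\infty}\tfrac{1}{h(R)}\log(\text{second sum})\le(d-1-r_{*})\gamma_{\ell}+c_{1}r_{*}^{\alpha+1},
\]
where $\delta:=\lim_{\epsilon\downarrow0}\delta(\epsilon)=\frac{((-\beta)-\gamma)^{2}}{4K_{2}}-\mathfrak c>0$. Both right–hand sides are negative as soon as $\frac{c_{1}r_{*}^{\alpha+1}}{r_{*}-d+1}<\gamma_{\ell}<\frac{\delta}{d-1}$, and such a $\gamma_{\ell}$ exists once $r_{*}>d$ is chosen to minimise the left–hand quantity and $R$ is large enough that $\delta(2\epsilon)$ is within an arbitrarily small error of $\delta$. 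This gives $\limsup_{R\to\infty}\frac{1}{h(R)}\log\mP_{R}<0$, which is part~7.

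I expect this last step to be the main obstacle. The chaining forces a grid of $\sim\e^{(d-1)\gamma_{\ell}T}$ reference points, and the unavoidable factor $\e^{c_{1}r_{*}^{\alpha}T}$ in the two–point estimate keeps $\gamma_{\ell}$ bounded below, so one must verify that the one–point rate $\delta$ — which the gap $\beta<-\beta_{0}$ makes strictly positive via $\sqrt{4K_{2}\mathfrak c}\le\beta_{0}/2$ — still beats the grid overhead $(d-1)\gamma_{\ell}$ for an admissible $\gamma_{\ell}$; the restriction $\iota<\tfrac{1}{3}$ (through $T=R^{\iota}$) is what keeps all polynomial–in–$R$ corrections negligible against $R^{\iota}$ throughout. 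The remaining ingredients — a union bound, part~1, and the two–point estimate \eqref{two-point-est-process} — are routine.
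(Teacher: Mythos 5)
Your proposal addresses only item~7 of this seven-item lemma, and even there it invokes item~1 --- which is itself part of the statement to be proved --- as a black box. The core new content of items~1--5 is a Girsanov/Khasminskii argument: one removes the singular part $b_1$ of the drift by a change of measure, uses the quantitative Khasminskii bound \eqref{Khas-Kry-gener1} with $f=|b_1^*(\sigma\sigma^*)^{-1}b_1|\in\tilde L_{p/2}$ (this is why $q=p/2$ and the standing assumption $p>2d$ appear) to verify Novikov's condition and to get $\big[\mE\rho_T^2\big]^{1/2}\le 2\e^{T\mathfrak c}$ with $\mathfrak c:=\frac{\Gamma^2\Vert b_1\Vert_{\tilde L_p}^4+K_2^2\Gamma\Vert b_1\Vert_{\tilde L_p}^2}{K_1K_2^2}$, and then transfers every event to the auxiliary flow $\psi^2$ driven by $b_2$ alone via $\mP(\psi|_{[0,T]}\in A)\le 2\e^{T\mathfrak c}\,\mP(\psi^2|_{[0,T]}\in A)^{1/2}$, after which the radial one-point estimates of \cite{DS} apply to $\psi^2$. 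None of this, and none of item~6, appears in your write-up, so as a proof of the stated lemma it is incomplete.

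For item~7 your covering scheme is the paper's: cover $\partial B_{r_1}$ by $N\les (R\e^{\kappa T})^{d-1}$ balls, apply the one-point bound at the centres and the two-point bound \eqref{two-point-est-process} to the diameters. With $\gamma_\ell=\kappa$ your two rate bounds, $(d-1)\gamma_\ell-\delta$ and $(d-1-r_*)\gamma_\ell+c_1r_*^{\alpha+1}$, are exactly the paper's $(d-1)\kappa+\mathfrak c-\frac{(-\beta-\gamma)^2}{4K_2}$ and $(d-1)\kappa-I(\kappa)$ (after optimising over $r_*>d$), and your verification that $\delta>0$, via $\sqrt{4K_2\mathfrak c}\le\beta_0/2$, is the same computation the paper makes. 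But the step you defer --- exhibiting $\gamma_\ell$ with $\frac{c_1r_*^{\alpha+1}}{r_*-d+1}<\gamma_\ell<\frac{\delta}{d-1}$ --- is precisely where the proof must close, and it does \emph{not} follow from the hypotheses: the lower end of your window is of order $c_1d^{\alpha}$, a quantity built from the Zvonkin-transformed coefficients in \eqref{c0c1c2}, while $\delta$ is controlled only through $\beta_0$, and nothing in the assumptions relates the two; the window can be empty. So your argument, as written, does not reach the conclusion. The paper closes it with a different accounting: it fixes $\kappa$ large, of order $c_1(d-1)^3$ (cf.\ \eqref{P3-log}), so that the covering overhead is charged against the superlinearly growing two-point rate $I(\kappa)\sim\kappa^{4/3}c_1^{-1/3}$, leaving only the $\kappa$-independent requirement $2\mathfrak c<\frac{(-\beta-\gamma)^2}{4K_2}$, which is exactly what $\beta<-\beta_0$ buys. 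You have correctly put your finger on the delicate point (and indeed the $(d-1)\kappa$ overhead attached to the one-point term still has to be absorbed somewhere in that accounting), but identifying the obstacle is not the same as overcoming it. Two smaller remarks: retaining the constraint $\inf_t|\psi_t(x_i)|\ge r-u_0$ at the centres lets you dispense with the paper's additional appeal to item~2, which is a mild simplification; and the growing diameter threshold $u_0=R^{\iota/2}$ is harmless but unnecessary (the paper uses the constant threshold $1$).
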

 \begin{proof}
Let us explain the idea of the proof of parts 1 to 5: we express the probabilities on the left side by the corresponding ones for the flow $\psi^2$ generated by the SDE with drift $b$ replaced by $b_2$ by applying Girsanov's theorem. This is possible since $b_1\in \tilde L_p$.   The required estimates for $\psi^2$ 
can then be obtained from results in \cite{DS}. 
Notice that strictly speaking the SDE generating $\psi^2$ cannot be applied since the assumptions in \cite{DS} require the coefficients to be one-sided Lipschitz continuous which is not necessarily true in our set-up. 
It is easy to check however that the estimates of the one-point motion in Propositions 4.2 to 4.6 in \cite{DS} hold without additional Lipschitz-type assumptions. Therefore, we divide the proof into two steps: a Girsanov argument and  then estimates for the flow $\psi^2$.
\\

Let $$\rho_t:=\exp\Big\{\int_0^t(b_1)^*(\sigma^{-1})^*(\psi_r^2(x))\dd W_r-\frac{1}{2}\int_0^t(b_1)^*(\sigma\sigma^*)^{-1}b_1(\psi_r^2(x))\dd r\Big\},$$
where $\psi^2_t(x)$ is the flow generated by the solution to
\begin{align*}
    \dd \psi^2_t=b_2(\psi^2_t)\dd t+\sigma(\psi^2_t)\dd W_t,\quad \psi^2_0=x\in\mR^d.
\end{align*}
From \eqref{Khas-Kry-gener1} and \eqref{est:girsanov-novikov} we get for $T>1$ and any $\lambda>0$ 
\begin{align}\label{exp-Giranov}
    \mE\exp\Big(\lambda\int_0^T(b_1)^*(\sigma\sigma^*)^{-1}b_1(\psi_r^2(x))\dd r\Big)&\leq   2\exp\Big(T\big((K_1^2K_2)^{-1}(\Gamma\lambda)^2\Vert b_1\Vert_{\tilde L_p}^4+K_1^{-1}\lambda\Gamma\Vert b_1\Vert_{\tilde L_p}^2\big)\Big).
\end{align}
Therefore, $(\rho_t)_{t\geq0}$ is a martingale. Fix $T>0$ and let $\mP^\rho:=\rho_T\mP$. Girsanov's theorem
 and   H\"older's inequality show for each measurable set  $A \in C([0,T],\R^d)$
 \begin{align}\label{pst-1}
      \mP&(\psi|_{[0,T]}\in A)=\mP^\rho\big( \psi^2|_{[0,T]}\in A\big)
      \nonumber\\&=\mE\big[ \rho_T:\,\psi^2|_{[0,T]}\in A \big]
      \leq\big[\mE\rho_T^2\big]^{1/2}\mP\big(\psi^2|_{[0,T]}\in A\big)^{1/2} \nonumber\\&\leq\Big[ \mE\exp\Big(2\int_0^T(b_1)^*(\sigma^{-1})^*(\psi_r^2(x))\dd W_r-2\int_0^T(b_1)^*(\sigma\sigma^*)^{-1}b_1(\psi_r^2(x))\dd r\nonumber
   \\&\quad\quad \quad\quad +\int_0^T(b_1)^*(\sigma\sigma^*)^{-1}b^1(\psi_r^2(x))\dd r\Big)\Big]^{1/2}\Big[\mP\big( \psi^2|_{[0,T]}\in A  \big)\Big]^{1/2}
     \nonumber
     \\&\leq\Big(\mE\Big[\exp\Big(2\int_0^T(b_1)^*(\sigma^{-1})^*(\psi_r^2(x))\dd W_r-2\int_0^T(b_1)^*(\sigma\sigma^*)^{-1}b_1(\psi_r^2(x))\dd r\Big]^2\Big)^{1/4}\nonumber\\&\quad \Big[\mE\exp\Big(\int_0^t2(b_1)^*(\sigma\sigma^*)^{-1}b_1(\psi_r^2(x))\dd r\Big)\Big]^{1/4}\Big[\mP\big( \psi^2|_{[0,T]}\in A  \big)\Big]^{1/2}
    \nonumber 
    \\&\leq \Big[\mE\exp\Big(2\int_0^T(b_1)^*(\sigma\sigma^*)^{-1}b_1(\psi_r^2(x))\dd r\Big)\Big]^{1/4}\Big[\mP\big( \psi^2|_{[0,T]}\in A  \big)\Big]^{1/2}
      \nonumber
      \\&\leq 2 \exp\Big(T\big((K_1^2K_2)^{-1}\Gamma^2\Vert b_1\Vert_{\tilde L_p}^4+K_1^{-1}\Gamma\Vert b_1\Vert_{\tilde L_p}^2\big)\Big)\Big[\mP\big( \psi^2|_{[0,T]}\in A  \big)\Big]^{1/2}.
 \end{align}
If $A_i$ denotes the set inside $\mP$ on the left side of item i in the Lemma (i$=1,...,5)$, then
$$
 \mP(\psi|_{[0,T]}\in A_i)\leq 2\exp\Big(T\frac{\Gamma^2\Vert b_1\Vert_{\tilde L_p}^4+K_2^2\Gamma\Vert b_1\Vert_{\tilde L_p}^2}{{K_1K_2^2}}\Big)\Big[\mP\big( \psi^2|_{[0,T]}\in A_i  \big)\Big]^{1/2}
$$
finishing the first step in cases 1-5. It remains to estimate $\Big[\mP\big( \psi^2|_{[0,T]}\in A_i  \big)\Big]^{1/2}$. Inserting the estimate in  \cite[Proposition 4.2 a)]{DS} under $(U^\beta)$, we obtain statement
1. Inserting the estimate in  \cite[Proposition 4.5]{DS} under $(U^\beta)$, we obtain statement
2. Inserting the estimate in  \cite[Proposition 4.6]{DS} under $(U^\beta)$, we obtain statement
3. 
Inserting the estimate in  \cite[Proposition 4.2 b)]{DS} under $(U_\beta)$, we obtain statement
4. and inserting the estimate in  \cite[Proposition 4.3]{DS} under $(U_\beta)$, we obtain statement
5.

Finally we show items 6 and 7. Without loss of generality we assume $\frac{1}{\eta}<R$. For a ball $B_{R}$ with radius $R$ we can cover its boundary $\partial B_{R}$ by $N=N_{\epsilon}\leq C_d(\frac{R}{\epsilon})^{d-1}$ balls  centered on $\partial B_{R}$ for any $\epsilon\in(0,R]$. Here we take $\epsilon=\exp(-\kappa h(R))$ for some $\kappa>0$ which will be chosen later and we  label the balls by $L_1,\cdots,L_N$ with corresponding centers $x_1,\cdots,x_N$. Note that $$N\leq C_dR^{d-1}\exp\Big((d-1)\kappa h(R)\Big).$$
Then
\begin{align*}
   \mP_{R}
    \leq&N\max_{1\leq i\leq N}\Big[\mP\Big(|\psi_T(x_i)|\leq r_1+1,\inf_{t\in[0,T]}|\psi_t(x_i)|> r+1\Big)\\&+\mP\Big(\inf_{t\in[0,T]}|\psi_t(x_i)|\leq  r+1\Big)+\mP\Big(\sup_{t\in[0,T]}\text{diam }\psi_t(L_i)\geq1\Big)\Big]
   \\=:& N (P_1(R)+P_2(R)+P_3(R)).
\end{align*}
Case 4 gives us the following upper bound (note $T=h(R)$)
\begin{align*}
    P_1(R)&\leq  2\exp\Big(T\frac{\Gamma^2\Vert b_1\Vert_{\tilde L_p}^4+K_2^2\Gamma\Vert b_1\Vert_{\tilde L_p}^2}{{K_1K_2^2}}
    -\frac{1}{4}\Big(\frac{\sqrt{T}\beta_*(r+1)}{\sqrt{K_2}}-\frac{r_1+1-R}{\sqrt{K_2T}}\Big)_+^2\Big)
\\&=2\exp\Big(h(R)\frac{\Gamma^2\Vert b_1\Vert_{\tilde L_p}^4+K_2^2\Gamma\Vert b_1\Vert_{\tilde L_p}^2}{{K_1K_2^2}}
-\frac{h(R)}{4{K_2}}\Big(\beta_*(r+1)-\gamma-\frac{1}{h(R)}\Big)_+^2\Big).
\end{align*}
So 
\begin{align}\label{P1-log}
 \limsup_{R\rightarrow\infty}\frac{1}{h(R)}\log(N\mP_{1}(R))&\leq (d-1)\kappa+\frac{\Gamma^2\Vert b_1\Vert_{\tilde L_p}^4+K_2^2\Gamma\Vert b_1\Vert_{\tilde L_p}^2}{{K_1K_2^2}}-\frac{1}{4{K_2}}(\beta-\gamma)^2.
\end{align}
Case 5 shows for $r=(1-\eta)R$
\begin{align*}
    P_2(R)&\leq 2\exp\Big(T\frac{\Gamma^2\Vert b_1\Vert_{\tilde L_p}^4+K_2^2\Gamma\Vert b_1\Vert_{\tilde L_p}^2}{{K_1K_2^2}}-(R-r-1)\frac{\beta_*(r+1)}{K_2}\Big)
    \\&=2\exp\Big(h(R)\frac{\Gamma^2\Vert b_1\Vert_{\tilde L_p}^4+K_2^2\Gamma\Vert b_1\Vert_{\tilde L_p}^2}{{K_1K_2^2}}-(\eta R-1)\frac{\beta_*(r+1)}{K_2}\Big).
\end{align*}
Hence
\begin{align}\label{P2-log}
\frac{1}{h(R)}\log(N\mP_{2}(R))\leq(d-1)\kappa+\frac{\Gamma^2\Vert b_1\Vert_{\tilde L_p}^4+K_2^2\Gamma\Vert b_1\Vert_{\tilde L_p}^2}{{K_1K_2^2}}-\frac{\eta R-1}{h(R)}\frac{\beta_*(r+1)}{K_2}.
\end{align}
Furthermore, by \cref{two-point-prop},
\begin{align}\label{P3-log}
    \limsup_{R\rightarrow\infty}\frac{1}{h(R)}\log P_3(R)\leq -\kappa^{4/3}c_1^{-1/3}\text{ with } \kappa>4c_1d^3
\end{align}
where  $c_1$ is taken from \eqref{c0c1c2} with $b$ replaced by $b_2$. Therefore, by \eqref{P1-log}, \eqref{P2-log} and \eqref{P3-log},  it follows that, for $\kappa>4c_1d^3$,
\begin{align}\label{PR}
\limsup_{R\rightarrow\infty}&\frac{1}{h(R)}\log\mP_{R}\nonumber\\\leq& \limsup_{R\rightarrow\infty}\frac{1}{h(R)}\log(N\mP_{1}(R)+ N\mP_{2}(R)+N\mP_{3}(R))
\nonumber\\\leq&2(d-1)\kappa+2\frac{\Gamma^2\Vert b_1\Vert_{\tilde L_p}^4+K_2^2\Gamma\Vert b_1\Vert_{\tilde L_p}^2}{{K_1K_2^2}}-\frac{1}{4{K_2}}(\beta-\gamma)^2+(d-1)\kappa -\kappa^{4/3}c_1^{-1/3}.
\end{align}
Notice that  $\beta-\gamma>\beta_0+\eta\geq 4\frac{\Vert b_1\Vert_{\tilde L_p}^2\Gamma+K_2\Vert b_1\Vert_{\tilde L_p}\sqrt\Gamma}{{\sqrt{K_1K_2}}}.$ If we choose $\kappa\geq 3c_1(d-1)^3$ initially, then get  
$$\limsup_{R\rightarrow\infty}\frac{1}{h(R)}\log\mP_{R}<0.$$
Therefore case 6 holds. \\
 
 We show case 7 in a similar way.
 We again cover $\partial B_{r_1}$ by $N\leq C_d{r_1}^{d-1}e^{\kappa(d-1)T}$ balls  centered on $\partial B_{r_1}$ for any with radius $e^{-\kappa T}$ for some $\kappa>0$ chosen later. Label the balls by $L_1,\cdots,L_N$ with corresponding centers $x_1,\cdots,x_N$. Then 
 \begin{align*}
     \mP_R\leq& N\max_i\Big[\mP\Big(|\psi_T(x_i)|\geq R+1,\inf_{t\in[0,T]}|\psi_t(x_i)|> r+1\Big)\\&+\mP\Big(|\psi_T(x_i)|\geq R+1,\inf_{t\in[0,T]}|\psi_t(x_i)|\leq  r+1\Big)+\mP\Big(\sup_{t\in[0,T]}\text{diam }\psi_t(L_i)\geq1\Big)\Big]
   \\=:& N (P_1(R)+P_2(R)+P_3(R)).
 \end{align*}
 From case 1 we then get (note $T=h(R)$)
 \begin{align*}
     P_1(R)\leq& 2\exp\Big(h(R)\frac{\Gamma^2\Vert b_1\Vert_{\tilde L_p}^4+K_2^2\Gamma\Vert b_1\Vert_{\tilde L_p}^2}{{K_1K_2^2}}
-\frac{h(R)}{4{K_2}}\Big(\frac{R+1-r_1}{h(R)}-\beta^*(r+1)\Big)_+^2\Big)
\\\leq & 2\exp\Big(h(R)\frac{\Gamma^2\Vert b_1\Vert_{\tilde L_p}^4+K_2^2\Gamma\Vert b_1\Vert_{\tilde L_p}^2}{{K_1K_2^2}}
-\frac{h(R)}{4{K_2}}(\frac{1}{h(R)}-\gamma-\beta)^2\Big).
 \end{align*}
 Therefore,
 \begin{align}\label{P1-upper}
  \frac{1}{h(R)}\log  P_1(R)\leq \frac{\Gamma^2\Vert b_1\Vert_{\tilde L_p}^4+K_2^2\Gamma\Vert b_1\Vert_{\tilde L_p}^2}{{K_1K_2^2}}
-\frac{1}{4{K_2}}(\frac{1}{h(R)}-\gamma-\beta)^2.
 \end{align}
Analogously, case 2 implies for $R$ such that $r=(1-\eta)R>r_0$ where $\beta^*(r_0)<0$, 
\begin{align}\label{P2-upper}
  \frac{1}{h(R)}\log  P_2(R)\leq \frac{\Gamma^2\Vert b_1\Vert_{\tilde L_p}^4+K_2^2\Gamma\Vert b_1\Vert_{\tilde L_p}^2}{{K_1K_2^2}}
-\frac{(R-r)^2}{16{K_2}h(R)}.
 \end{align}
By \eqref{P1-upper}, \eqref{P2-upper} and \eqref{P3-log} we obtain
 \begin{align}\label{case7-P}
 &\frac{1}{h(R)}\log\mP_R\nonumber\\&\leq 3(d-1)\kappa+2\frac{\Gamma^2\Vert b_1\Vert_{\tilde L_p}^4+K_2^2\Gamma\Vert b_1\Vert_{\tilde L_p}^2}{K_1K_2^2}-\frac{1}{4{K_2}}(\frac{1}{h(R)}-\beta-\gamma)^2-\frac{(R-r)^2}{16{K_2}h(R)} -\kappa^{4/3}c_1^{-1/3}
 \end{align}
 and 
 \begin{align*}
     \limsup_{R\rightarrow\infty}\frac{1}{h(R)}\log\mP_R\leq 3(d-1)\kappa+2\frac{\Gamma^2\Vert b_1\Vert_{\tilde L_p}^4+K_2^2\Gamma\Vert b_1\Vert_{\tilde L_p}^2}{K_1K_2^2}-\frac{1}{4{K_2}}(-\beta-\gamma)^2-\kappa^{4/3}c_1^{-1/3}.
 \end{align*}
Under $(U^\beta)$,
\begin{align*}
     (-\gamma-\beta)^2 \geq (-\beta_0-\eta)^2\geq 16 K_2\frac{\Gamma^2\Vert b_1\Vert_{\tilde L_p}^4+K_2^2\Gamma\Vert b_1\Vert_{\tilde L_p}^2}{{K_1K_2^2}}.
 \end{align*}
Hence, choosing $\kappa\geq 3c_1(d-1)^3$ above, we conclude that   
$\limsup_{R\rightarrow\infty}\frac{1}{h(R)}\log\mP_{R}<0.$
 \end{proof}
 Now we are ready to state the first main theorem of this section.
 \begin{theorem}\label{non-attractor}
   Let \cref{Ass} hold. Further assume that there exist vector fields $b_1$ and $ b_2$ such that $b=b_1+b_2$ with $b_1\in \tilde L_p(\mR^d)$. Let $(\psi_t(x))_{t\geq0}$ denote the flow generated by the solution to  \eqref{sde0}.  Let  $\Gamma:=C_{\mathrm{Kry}}(\frac{p}{2})\Big(\big(\frac{K_2}{K_1}\big)^{\frac{4d^2}{1-d/\rho}}+\big(\frac{\Vert\nabla\sigma\Vert_{\tilde L_\oops}^2}{K_1}\big)^{\frac{4d^2}{1-d/\rho}}+\big(\frac{\Vert b_2\Vert_{\tilde L_{p}}}{K_1}\big)^{\frac{4d}{1-d/p}}\Big)$  where  $C_{\mathrm{Kry}}(\frac{p}{2})$  is  from \eqref{est-Krlov} with $q=\frac{p}{2}$ depending on $p,\rho$ and $d$ only.  If $b_2$ satisfies \cref{Ass-onepoint} $(U_\beta)$ for 
   $$\beta>\beta_0:= 4\frac{\Vert b_1\Vert_{\tilde L_p}^2\Gamma+K_2\Vert b_1\Vert_{\tilde L_p}\sqrt\Gamma}{{\sqrt{K_1K_2}}},$$
   then for any $\gamma\in[0,\beta-\beta_0)$ we have
   \begin{align}\label{pro-lowerbound}
   \lim_{r\rightarrow\infty}\mP\Big(B_{\gamma t}\subset\psi_t(B_r)\quad\forall \quad t\geq0\Big)=1.
   \end{align}
 \end{theorem}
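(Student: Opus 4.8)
The plan is to bootstrap the finite-time expansion estimate of item~6 of \cref{lemm-mul} along a sequence of nested balls and consecutive time intervals, using the cocycle property of the flow $\psi$, and then to conclude by a union bound that the ``good'' event has probability tending to $1$ as $r\to\infty$. The heart of the matter is already contained in \cref{lemm-mul}(6); what remains is a concatenation argument together with some bookkeeping.

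First I would fix $\gamma\in[0,\beta-\beta_0)$; since the event in \eqref{pro-lowerbound} is decreasing in $\gamma$ (larger $\gamma$ means a larger ball $B_{\gamma t}$ to cover) it suffices to treat $\gamma>0$. Then I would pick auxiliary constants: $\gamma'\in(\gamma,\beta-\beta_0)$, then $\eta\in(0,\tfrac12)$ so small that $\eta+\gamma'<\beta-\beta_0$ and $a:=1-\eta-\gamma/\gamma'>0$, and $h(R):=\sqrt R$, which satisfies the growth hypotheses of \cref{lemm-mul}(6). Starting from $R_0:=r$, define recursively $T_n:=h(R_{n-1})$, $\tau_n:=\sum_{k\le n}T_k$ (with $\tau_0:=0$), and $R_n:=R_{n-1}+\gamma' h(R_{n-1})$. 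The crucial bookkeeping identity, coming from $T_k=(R_k-R_{k-1})/\gamma'$, is $\tau_n=(R_n-r)/\gamma'$; in particular $R_n\uparrow\infty$, $\tau_n\uparrow\infty$, and $[0,\infty)=\bigcup_{n\ge1}[\tau_{n-1},\tau_n]$.

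Next I would introduce, for each $n\ge1$, the event
$$
G_n:=\Big\{B_{R_n}\subseteq\phi_{\tau_{n-1},\tau_n}(B_{R_{n-1}})\Big\}\cap\bigcap_{t\in[\tau_{n-1},\tau_n]}\Big\{B_{(1-\eta)R_{n-1}}\subseteq\phi_{\tau_{n-1},t}(B_{R_{n-1}})\Big\},
$$
whose measurability is routine from the continuity of $(s,t)\mapsto\phi_{s,t}$. Because the coefficients of \eqref{sde0} are time-homogeneous, $(\phi_{\tau_{n-1},\tau_{n-1}+u})_{u\in[0,T_n]}$ has the law of $(\psi_u)_{u\in[0,T_n]}$, so $G_n^c$ is exactly the event whose probability is estimated in \cref{lemm-mul}(6) with $R=R_{n-1}$ and with $\gamma'$ in the role of its $\gamma$ (so that the outer radius $r_1$ there equals $R_n$ and the intermediate one equals $(1-\eta)R_{n-1}$). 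Hence there exist fixed $c>0$ and $R_*>2$ with $\mP(G_n^c)\le e^{-c\,h(R_{n-1})}$ whenever $R_{n-1}\ge R_*$; since $R_{n-1}\ge R_0=r$, this holds for every $n$ once $r\ge R_*$. On $G:=\bigcap_{n\ge1}G_n$, an induction on $n$ using the cocycle identity $\psi_t=\phi_{\tau_{n-1},t}\circ\psi_{\tau_{n-1}}$ for $t\ge\tau_{n-1}$ together with monotonicity of images yields $\psi_{\tau_n}(B_r)\supseteq B_{R_n}$ for all $n$, and $\psi_t(B_r)\supseteq B_{(1-\eta)R_{n-1}}$ for $t\in[\tau_{n-1},\tau_n]$.

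Finally I would check the two remaining points. (i) On $G$, $B_{\gamma t}\subseteq\psi_t(B_r)$ for every $t\ge0$: the case $t=0$ is trivial, and for $t\in[\tau_{n-1},\tau_n]$ it suffices that $\gamma\tau_n\le(1-\eta)R_{n-1}$; substituting $\tau_n=(R_{n-1}-r)/\gamma'+\sqrt{R_{n-1}}$, this reduces to $\gamma\sqrt{R_{n-1}}\le aR_{n-1}+\gamma r/\gamma'$, which holds once $R_{n-1}\ge(\gamma/a)^2$, hence once $r\ge(\gamma/a)^2$. (ii) $\mP(G^c)\to0$ as $r\to\infty$: by the union bound and the crude growth bound $R_{n-1}\ge r+\gamma'\sqrt r\,(n-1)$ one gets $\mP(G^c)\le\sum_{n\ge1}e^{-c\sqrt{R_{n-1}}}$, which, splitting the sum at $n-1\approx\sqrt r/\gamma'$ and comparing the tail with $\int_0^\infty e^{-c\sqrt{\gamma'}\,r^{1/4}\sqrt u}\,du$, is $O(\sqrt r\,e^{-c\sqrt r})+O\big((c^2\gamma'\sqrt r)^{-1}\big)\to0$. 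Taking $r\ge\max\{R_*,(\gamma/a)^2,3\}$ then gives $\mP\big(B_{\gamma t}\subseteq\psi_t(B_r)\ \forall\,t\ge0\big)\ge\mP(G)\ge1-\mP(G^c)$, and the right-hand side tends to $1$, which is \eqref{pro-lowerbound}. The only genuinely delicate points are the uniformity in $n$ of the per-interval estimate \cref{lemm-mul}(6) — which works precisely because $R_{n-1}\ge r$ — and the vanishing of the tail sum $\sum_n e^{-c\sqrt{R_{n-1}}}$ as $r\to\infty$; here the identity $\tau_n=(R_n-r)/\gamma'$ is what keeps the bookkeeping clean and simultaneously delivers the key inequality $\gamma\tau_n\le(1-\eta)R_{n-1}$.
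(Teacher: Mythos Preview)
Your proof is correct and follows essentially the same iterative scheme as the paper: apply \cref{lemm-mul}(6) along a recursively defined sequence of radii $R_n$ with time steps $h(R_{n-1})$, use time-homogeneity plus the flow property to concatenate, and conclude via a summable tail bound. Your introduction of an auxiliary $\gamma'>\gamma$ (so that $a=1-\eta-\gamma/\gamma'>0$) makes the intermediate-time inclusion $\gamma\tau_n\le(1-\eta)R_{n-1}$ explicit, a point the paper's sketch leaves implicit.
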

 \begin{proof}
   For $\gamma\in[0,\beta-\beta_0)$, let $\eta\in(0,\frac{1}{2})$ such that $\gamma+\eta<\beta-\beta_0$. Let $R_0\geq 2$, $R_{i+1}=R_i+\gamma h(R_i)$ by iteration, where $h:[1,\infty)\rightarrow[1,\infty)$ is strictly increasing and $\lim_{x\rightarrow\infty}\frac{h(x)}{x}=0$ and $\lim_{x\rightarrow\infty}\frac{\log x}{h(x)}=0$. For $i=0,1,\cdots,$ take $r_i=(1-\eta)R_i$, $\bar r_i=R+\gamma h(R_i)$. Define
   $$\mP_{R_i}:=\mP\Big[\Big(B_{\bar r_i}\nsubseteq\psi_T(B_{R_i})\Big)\cup\cup_{t\in[0,T]}\Big(B_{r_i}\nsubseteq\psi_t(B_{R_i})\Big)\Big].$$
   Then \cref{lemm-mul} case 6 shows that 
   \begin{align*}
     \sum_{i=0}^{\infty}\mP_{R_i}<\infty, \quad\text{if}\quad  \sum_{i=0}^{\infty}\exp(-\kappa h(R_i))<\infty, \quad \kappa>0.
   \end{align*}
   If we take $h(R_i)=R_i^\alpha$ for some $\alpha\in(0,1)$, then  Borel-Cantelli Lemma and time-homogeneity of flow $\psi$ yield the result \eqref{pro-lowerbound}.
 \end{proof}
  Finally, we state the following  theorem on the existence of random attractors.
  \begin{theorem}\label{exist-Random-attractor}
    Let \cref{Ass} hold. Further assume that there exist vector fields $b_1$ and $ b_2$ such that $b=b_1+b_2$ with $b_1\in \tilde L_p(\mR^d)$. Let $(\psi_t(x))_{t\geq0}$ denote the flow generated by the solution to  \eqref{sde0}.  Let  $\Gamma:=C_{\mathrm{Kry}}(\frac{p}{2})\Big(\big(\frac{K_2}{K_1}\big)^{\frac{4d^2}{1-d/\rho}}+\big(\frac{\Vert\nabla\sigma\Vert_{\tilde L_\oops}^2}{K_1}\big)^{\frac{4d^2}{1-d/\rho}}+\big(\frac{\Vert b_2\Vert_{\tilde L_{p}}}{K_1}\big)^{\frac{4d}{1-d/p}}\Big)$  where  $C_{\mathrm{Kry}}(\frac{p}{2})$  is  from \eqref{est-Krlov} with $q=\frac{p}{2}$ depending on $p, \rho$ and $d$ only.  If $b_2$ satisfies \cref{Ass-onepoint} $(U^\beta)$ for $$\beta<-\beta_0:= -4\frac{\Vert b_1\Vert_{\tilde L_p}^2\Gamma+K_2\Vert b_1\Vert_{\tilde L_p}\sqrt\Gamma}{{\sqrt{K_1K_2}}},$$
   then, for any $\gamma\in[0,-\beta-\beta_0)$, we have
   \begin{align}\label{pro-upperbound}
   \lim_{r\rightarrow\infty}\mP\Big(B_{\gamma t}\subset\psi_{-t,0}^{-1}(B_r)\quad\forall \quad t\geq0\Big)=1.
   \end{align}
   In particular, $\psi$ has a random attractor.
  \end{theorem}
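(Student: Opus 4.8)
The plan is to first establish the pullback estimate \eqref{pro-upperbound} and then to read off the existence of the attractor from \cref{flow-attractor}; I describe the latter deduction first, as it is short. Recall that the flow induced by the RDS satisfies $\psi_{-t,0}(x)=\varphi_t(x,\theta_{-t}\omega)$, whence $\psi_{-t,0}^{-1}(B_r)=\varphi^{-1}(t,B_r,\theta_{-t}\omega)$. Since $\beta<-\beta_0$ we may fix $\gamma\in(0,-\beta-\beta_0)$. Writing $R$ for the radius in \eqref{pro-upperbound}, on the event $\{B_{\gamma t}\subseteq\varphi^{-1}(t,B_R,\theta_{-t}\omega)\ \text{for all}\ t\geq0\}$ we have, for every fixed $r>0$ and every $t\geq r/\gamma$, that $B_r\subseteq B_{\gamma t}\subseteq\varphi^{-1}(t,B_R,\theta_{-t}\omega)$, hence $B_r\subseteq\bigcup_{s=0}^{\infty}\bigcap_{t\geq s}\varphi^{-1}(t,B_R,\theta_{-t}\omega)$. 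Letting $R\to\infty$ and invoking \eqref{pro-upperbound} shows that condition (ii) of \cref{flow-attractor} holds for every $r>0$, so $\varphi$ admits an attractor.

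To prove \eqref{pro-upperbound} I would follow the scheme of the proof of \cref{non-attractor}, replacing case 6 of \cref{lemm-mul} by cases 7, 2 and 3 and working with the pullback flow. Fix $\gamma\in[0,-\beta-\beta_0)$ and $\eta\in(0,\tfrac12)$ with $\gamma+\eta<-\beta-\beta_0$, and set $h(R):=R^{\iota}$ with $\iota\in(0,\tfrac13)$, as required in case 7. Starting from a large $R_0\geq2$, define recursively $R_{i+1}:=R_i+\gamma h(R_i)$, $T_i:=\sum_{j<i}h(R_j)$, and put $r_i:=(1-\eta)R_i$, $\bar r_i:=R_i+\gamma h(R_i)=R_{i+1}$; note $R_i=R_0+\gamma T_i$. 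Exactly as in \cref{oneminusdelta} (the boundary of $\psi_{-T_{i+1},-T_i}(B_{\bar r_i})$ is contained in the image of $\partial B_{\bar r_i}$, and that image is bounded), to verify $\psi_{-T_{i+1},-T_i}(B_{\bar r_i})\subseteq B_{R_i}$ it is enough to verify $\psi_{-T_{i+1},-T_i}(\partial B_{\bar r_i})\subseteq B_{R_i}$. Covering $\partial B_{\bar r_i}$ by $N_i\leq C_d\,\bar r_i^{\,d-1}e^{(d-1)\kappa h(R_i)}$ balls of radius $e^{-\kappa h(R_i)}$ and estimating, at each centre $x$, the event $|\psi_{h(R_i)}(x)|\geq R_i$ with $\inf_{s\leq h(R_i)}|\psi_s(x)|\geq r_i$ by case 7, the event $|\psi_{h(R_i)}(x)|\geq R_i$ with $\inf_{s\leq h(R_i)}|\psi_s(x)|\leq r_i$ by case 2, the outward excursion $\sup_{s\leq h(R_i)}|\psi_s(x)|\geq\bar r_i+1$ (needed later for intermediate times) by case 3, and the diameters of the images of the covering balls by \cref{two-point-prop}, one obtains for a suitable fixed $\kappa>0$ and all large $i$ a bound $\mP[\psi_{-T_{i+1},-T_i}(\partial B_{\bar r_i})\nsubseteq B_{R_i}]\leq e^{-\kappa h(R_i)}=e^{-\kappa R_i^{\iota}}$; here the hypothesis $\beta<-\beta_0$, equivalently $\gamma+\eta<-\beta-\beta_0$, is exactly what forces the resulting exponent to be strictly negative once the covering loss $(d-1)\kappa$ is absorbed. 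By stationarity of $\mP$ under $\theta$, $\psi_{-T_{i+1},-T_i}$ has the law of $\psi_{h(R_i)}$, and the maps $\{\psi_{-T_{i+1},-T_i}\}_i$ are driven by increments of $W$ over disjoint intervals, so these estimates feed into the cocycle identity $\psi_{-T_{i+1},0}=\psi_{-T_i,0}\circ\psi_{-T_{i+1},-T_i}$.

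Since $\sum_i e^{-\kappa R_i^{\iota}}<\infty$ (the $R_i$ grow at least linearly in $i$), the Borel--Cantelli lemma supplies a full-measure set on which only finitely many of the events $\{\psi_{-T_{i+1},-T_i}(\partial B_{\bar r_i})\nsubseteq B_{R_i}\}$, and of the auxiliary excursion and diameter events used for the intermediate times $t\in(T_i,T_{i+1})$, occur. On this set one chains the inclusions $\psi_{-T_{i+1},-T_i}(B_{\bar r_i})\subseteq B_{R_i}$ from the last bad epoch down to time $0$, using in addition that a ball of radius at most $\gamma t\leq\bar r_i$ carried by $\psi_{-t,-T_i}$ for $t-T_i\leq h(R_i)$ stays inside $B_{R_i}$ (the inward drift gains $|\beta|(t-T_i)$, which dominates the growth $\gamma(t-T_i)$), to conclude $\psi_{-t,0}(B_{\gamma t})\subseteq B_r$ for all $t\geq0$ as soon as $r$ is chosen large enough to absorb the finitely many exceptional epochs. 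Letting $r\to\infty$ and using time-homogeneity of $\psi$ gives \eqref{pro-upperbound}.

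The step I expect to be the main obstacle is this chaining/localization: turning the one-epoch, one-point estimates of \cref{lemm-mul} into the uniform-in-$t\geq0$, uniform-over-$B_{\gamma t}$ statement \eqref{pro-upperbound}. It requires (i) the set-to-boundary reduction via the homeomorphism property of the backward flow, as in \cref{oneminusdelta}; (ii) controlling the images of the growing balls at the intermediate times $t\in(T_i,T_{i+1})$, and not only at the grid points, which forces one to combine case 7 with the outward-excursion bound of case 3 and the contraction furnished by the inward drift; and (iii) bookkeeping the constants so that the three families of probabilities are genuinely summable and so that the threshold $\beta<-\beta_0$ is precisely what is used. The subsequent reduction to \cref{flow-attractor} is then routine.
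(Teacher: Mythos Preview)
Your overall architecture matches the paper's: deduce the attractor from \cref{flow-attractor} once \eqref{pro-upperbound} is shown, and obtain the latter by a discrete-time chaining along radii $R_i$ with step sizes $h(R_i)=R_i^{\iota}$, handling the one-step inclusion via case 7 of \cref{lemm-mul} and concluding by Borel--Cantelli and time-homogeneity. The reduction to \cref{flow-attractor} you give is exactly the paper's, and the discrete-time step is also the same.

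The genuine gap is in your treatment of the intermediate times $t\in(T_i,T_{i+1})$. You propose to control these with case 3 (plus the heuristic that the inward drift $|\beta|$ beats the growth rate $\gamma$), but case 3 alone is not enough. The paper reduces this part to the estimate
\[
P_2(R)=\mP\Big(\sup_{|x|=r}\sup_{t\in[0,T]}|\psi_{t,T}(x)|\ge R\Big)
\]
and shows that $\frac{1}{h(R)}\log P_2(R)\to-\infty$. Following \cite[Proof of Theorem~3.1~a)]{DS}, this requires three ingredients: the analogues of \cite[Proposition~4.5]{DS} and \cite[Proposition~4.6]{DS}, which are precisely your cases 2 and 3, \emph{and} the analogue of \cite[(4.7)]{DS}, a two-point decay estimate of the form
\[
\mP\big(|\psi_{t+2^{-n},1}(x)-\psi_{t+2^{-n},1}(z)|\ge\tfrac{\epsilon}{2}\big)\lesssim \exp\Big(-\frac{(\epsilon-2\delta)^2}{16(K_2-K_1)}\Big)\qquad(|x-z|\le\delta),
\]
which is \emph{not} implied by any of cases 1--7 of \cref{lemm-mul} or by \cref{two-point-prop}. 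The paper derives it by a separate Girsanov argument that removes the \emph{full} drift $b$ (not just $b_1$), followed by a time-change and Gaussian tail bounds for the resulting martingale. Without this estimate the dyadic decomposition of $\sup_{t\in[0,1]}Z_t$ used in \cite{DS} (and hence the control of $P_{2,2}$) cannot be closed, so your appeal to case 3 leaves the uniform-in-$t$ statement unproved. In short: you have correctly identified the main obstacle, but the tool you propose for it is too weak; the missing idea is the \cite[(4.7)]{DS}-type bound obtained via Girsanov on the driftless equation.
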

  \begin{proof}
  The existence of an attractor is an easy observation from \cref{flow-attractor} if we have \eqref{pro-upperbound}. So we only need to show \eqref{pro-upperbound}. The argument  is essentially  the same as  \cite[Proof of Theorem 3.1 a)]{DS}. We give the outline of the proof emphasising those arguments  which are different.  \\
 
For $\gamma\in[0,-\beta-\beta_0)$, let $\eta\in(0,\frac{1}{2})$ such that $\gamma+\eta<-\beta-\beta_0$. Let $h(y)=y^\alpha$   for some $\alpha\in(0,\frac{1}{3})$. Notice that such $h$ is strictly increasing and $\lim_{y\rightarrow\infty}\frac{h(y)}{y}=0$ and $\lim_{y\rightarrow\infty}\frac{\log y}{h(y)}=0$. For $T\in(1,\infty)$, take $R:= T^{1/\alpha}$, $r_1=R+\gamma T$ and $r=(1-\eta)R$. Let  $(\phi_{s,T}(x))_{s\leq T}$ denote the flow starting from $x$ at initial time $s$. We define     
$$\mP_{R}:=\mP\Big[\Big(B_{r_1}\nsubseteq\psi_T^{-1}(B_{R})\Big)\cup\cup_{t\in[0,T]}\Big(B_{r}\nsubseteq\phi_{t,T}^{-1}(B_{R})\Big)\Big].$$  
Once we show that 
\begin{align}
    \label{aim}
    \lim_{R\rightarrow\infty}\frac{1}{h(R)}\log \mP_R<0,
\end{align}
then, by the same argument as in the proof of \cref{non-attractor}, we can finish the proof by the Borel-Cantelli Lemma and time-homogeneity of the flow $\psi$. 

To show \eqref{aim}, notice that 
\begin{align*}
   \mP_{R}&\leq \mP\Big[\cup_{|x|=r_1}\Big((|\psi_T(x)|\geq R)\cap(\inf_{t\in[0,T]}|\psi_{t}(x)|\geq r)\Big)\Big]+\mP\Big(\sup_{|x|=r}\sup_{t\in[0,T]}|\psi_{t,T}(x)|\geq R\Big) 
   \\&=:P_1(R)+P_2(R).
\end{align*}
For $P_1(R)$, we get from \cref{lemm-mul}, case 7 that  (note $T=R^\alpha=h(R)$)
\begin{align*}
    \lim_{R\rightarrow\infty}\frac{1}{h(R)}\log P_1(R)<0.
\end{align*}
In the following we show 
\begin{align}
    \label{P2}
     \lim_{R\rightarrow\infty}\frac{1}{h(R)}\log P_2(R)=-\infty,
\end{align}
which is sufficient to get \eqref{aim}. 

Let $\xi_s:=(\sup_{|x|=r}|\psi_{s,T}(x)|-r)_+$, $\zeta_s:=(\sup_{|x|=r+R\eta/2}|\psi_{s,T}(x)|-r)_+$.  Then, as shown in \cite[p.1205-1206]{DS}, we have
\begin{align*}
    \limsup_{R\rightarrow\infty}&\frac{1}{h(R)}\log P_2(R)\\\leq & \limsup_{R\rightarrow\infty}\frac{1}{h(R)}\log\max_{s\in[1,T]}\Big[\mP\Big(\zeta_s\geq \eta R\Big)+\mP\Big(\sup_{t\in[s-1,s]}\sup_{|x|=r}|\psi_{t,s}(x)|\geq r+\frac{\eta}{2}R\Big)\Big]
    \\&:=\limsup_{R\rightarrow\infty}\frac{1}{h(R)}\log\max_{s\in[1,T]}(P_{2,1}(s,R)+P_{2,2}(s,R)).
\end{align*}
To estimate $P_{2,1}(s,R)$, for fixed $0\leq s\leq T$, denote $r_0:=r+\frac{\eta}{2}R$, we cover $\partial B_{r_0}$ by $N\leq C_dr_0^{d-1}e^{\kappa(d-1)T}$ balls of radius $e^{-\kappa T}$ centered on $\partial B_{r_0}$ with $\kappa<\frac{c_1d^2}{3(d-1)}$ (the same choice as in the proof of \cref{lemm-mul} case 7. Label the balls by $L_1,\cdots, L_N$ and their centers correspondingly by $x_1,\cdots,x_N$. Then for a number $r_2$ such that $\beta^*(r_2)<0$ where $\beta^*(r_2)$ is from \eqref{beta-up}, we have 
\begin{align*}
  P_{2,1}(s,R) 
  \leq N\max_i\Big[&\mP\Big(|\psi_{s,T}(x_i)|\geq r+\eta R-1\Big)+\mP\Big(\text{diam } \psi_{s,T}(L_i)\geq 1\Big)\Big]  
 \\ \leq N\max_i\Big[&\mP\Big(|\psi_{s,T}(x_i)|\geq r+\eta R-1,\inf_{s\leq t\leq T}|\psi_{s,t}(x_i)|> r_2\Big)\\&+\mP\Big(|\psi_{s,T}(x_i)|\geq r+\eta R-1,\inf_{s\leq t\leq T}|\psi_{s,t}(x_i)|\leq r_2\Big)+\mP\Big(\text{diam } \psi_{s,T}(L_i)\geq 1\Big)\Big].  
\end{align*}
By the same argument from \cref{lemm-mul} case 7 \eqref{case7-P} with $h(R)=R^\alpha=T$, and \cref{lemm-mul} case 2, and \cref{two-point-prop} we get 
\begin{align*}
 \limsup_{R\rightarrow\infty} \frac{1}{h(R)}\log \max_{s\in[1,T]}P_{2,1}(s,R)  =-\infty.
\end{align*}
Up to here, in order to get \eqref{P2}, we only  need to show 
\begin{align}
    \label{P22} \limsup_{T\rightarrow\infty}\frac{1}{T}\log \max_{s\in[1,T]} P_{2,2}(s,T^{1/\alpha})=-\infty.
\end{align}
In \cite[Proof of Theorem 3.1 a)]{DS}, this is shown by using three statements: \cite[(4.7)]{DS}, \cite[Proposition 4.5]{DS} and \cite[Proposition 4.6]{DS}. In our setting, we already showed the second and the third statements: these are \cref{lemm-mul} case 2 and case 3 correspondingly. 
Therefore it is sufficient to show the estimate  corresponding to  \cite[(4.7)]{DS} in our setting.
In order to do so we first apply Girsanov Theorem as we did in \cref{lemm-mul}. Let $$\rho_t:=\exp\Big(\int_0^tb^*(\sigma^{-1})^*(\phi_r(x))\dd W_r-\frac{1}{2}\int_0^tb^*(\sigma\sigma^*)^{-1}b(\phi_r(x))\dd r\Big),$$
where $\phi_t(x)$ is the flow generated by the solution to
\begin{align*}
   \dd \phi_t=\sigma(\phi_t)\dd W_t,\quad \phi_0(x)=x\in\mR^d.
\end{align*}
Following from \eqref{Khas-Kry-gener1} we get for $T>1$ and any $\lambda>0$ 
\begin{align*}
    \mE\exp\Big(\lambda\int_0^Tb^*(\sigma\sigma^*)^{-1}b(\phi_r(x))dr\Big)&\leq  \exp\Big(T\frac{\Vert b\Vert_{\tilde L_p}^4(\lambda\Gamma')^2+K_2^2\Vert b\Vert_{\tilde L_p}^2\lambda\Gamma'}{{K_1K_2^2}}\Big)
\end{align*}
where $\Gamma'=C_{\mathrm{Kry}}(\frac{p}{2})\Big(\big(\frac{K_2}{K_1}\big)^{\frac{4d^2}{1-d/\rho}}+\big(\frac{\Vert\nabla\sigma\Vert_{\tilde L_\oops}^2}{K_1}\big)^{\frac{4d^2}{1-d/\rho}}\Big)$ and $C_{\mathrm{Kry}}(\frac{p}{2})$ is from \eqref{est-Krlov} with $p=\frac{p}{2}$ and $b=0$.
Therefore $(\rho_t)_{t\geq0}$ is a martingale. Let $\mP^\rho:=\rho_1\mP$. As we already did in \eqref{pst-1}, by Girsanov theorem and H\"older's inequality,  for $\epsilon>0$, for any $x,z\in\mR^d$, 
\begin{align}\label{PRO-RHO}
  \mP\Big(|\psi_{t+\frac{1}{2^n},1}(x)&-\psi_{t+\frac{1}{2^n},1}(z)|\geq \frac{\epsilon}{2}\Big)
  \nonumber\\=&\mP^\rho\Big(|\phi_{t+\frac{1}{2^n},1}(x)-\phi_{t+\frac{1}{2^n},1}(z)|\geq \frac{\epsilon}{2}\Big)
  \nonumber\\=&\mE[\rho_1\mathbb{I}_{\small\left\{|\phi_{t+\frac{1}{2^n},1}(x)-\phi_{t+\frac{1}{2^n},1}(z)|\geq \frac{\epsilon}{2}\small\right\}}]
\nonumber\\\leq& 2\exp\Big(\frac{\Vert b\Vert_{\tilde L_p}^4\Gamma'^2+K_2^2\Vert b\Vert_{\tilde L_p}^2\Gamma'}{{K_1K_2^2}}\Big)\Big[\mP\Big(|\phi_{t+\frac{1}{2^n},1}(x)-\phi_{t+\frac{1}{2^n},1}(z)|\geq \frac{\epsilon}{2}\Big)\Big]^{1/2}.
\end{align}
Let $B_t(x):=W_{\int_{t}^1|\sigma|^2(\phi_r(x))dr}$, then by time change and the fact that for $\kappa_1,\kappa_2\in \mR$
$$\mP\Big(W_t\geq \kappa_1\Big)\leq \frac{1}{2}e^{-\frac{\kappa_1^2}{2t}},\quad \mP(\sup_{s\leq t}W_s\geq \kappa_2)\leq e^{-\frac{\kappa_2^2}{2t}},$$ we know  for $x,z\in\mR^d$ and $|x-z|\leq \delta$ with $\delta>0$
\begin{align*}
    \Big[&\mP\Big(|\phi_{t+\frac{1}{2^n},1}(x)-\phi_{t+\frac{1}{2^n},1}(z)|\geq \frac{\epsilon}{2}\Big)\Big]^{1/2}\nonumber\\\leq&\Big[\mP\Big(|B_{t+\frac{1}{2^n}}(x)-B_{t+\frac{1}{2^n}}(z)|\geq \frac{\epsilon}{2}-\delta\Big)\Big]^{1/2}
\nonumber\\\leq &\Big[ \exp\Big(-\frac{(\epsilon-2\delta)^2}{4}\frac{1}{2(\int_{t+\frac{1}{2^n}}^1|\sigma|^2(\phi_r(x))\dd r-\int_{t+\frac{1}{2^n}}^1|\sigma|^2(\phi_r(x))\dd r)}\Big)\Big]^{1/2}
\nonumber\\\leq & \exp\Big(-\frac{(\epsilon-2\delta)^2}{16}\frac{1}{K_2-K_1}\Big).
\end{align*}
Accordingly by \eqref{PRO-RHO}  for any $\epsilon,\delta>0$ and for any $x,z\in\mR^d$ with $|x-z|\leq \delta$ we have
\begin{align*}
    \mP\Big(|\psi_{t+\frac{1}{2^n},1}(x)-\psi_{t+\frac{1}{2^n},1}(z)|\geq \frac{\epsilon}{2}\Big)& \leq 2\exp\Big(\frac{\Vert b\Vert_{\tilde L_p}^4\Gamma'^2+K_2^2\Vert b\Vert_{\tilde L_p}^2\Gamma'}{{K_1K_2^2}}-\frac{(\epsilon-2\delta)^2}{16}\frac{1}{K_2-K_1}\Big)\\&\lesssim \exp\Big(-\frac{(\epsilon-2\delta)^2}{16}\frac{1}{K_2-K_1}\Big)
\end{align*}
corresponding to \cite[(4.7)]{DS}. Applying the argument from \cite[Proof of Theorem 3.1 a)]{DS} we get that $P_{2,2}(s,T^{1/\alpha})$ decays super exponentially in $T$, therefore \eqref{P22} holds. The proof  is complete.
  \end{proof}
 \appendix
 \section{Bounds for solutions of elliptic PDEs}\label{A}
 Consider the following elliptic equation on $\mR^d$ (recall the summation convention):
 \begin{align}
\label{PDE}
\lambda u- a_{ij}\partial_{ij}u+b\cdot \nabla u=f,
 \end{align}
 where $\lambda>0$, $a(\cdot):\mR^d\rightarrow\mR^d\otimes\mR^d$ is a symmetric matrix-valued Borel measurable function, and 
 $b(\cdot):\mR^d\rightarrow\mR^d$ and  $f:\mR^d\rightarrow\mR$ are  Borel measurable functions such that $f\in\tilde L_p(\mR^d)$ with $p\in(1,\infty)$. The definition of the solution to  equation \eqref{PDE} is as follows:
 \begin{definition}
 Let $\lambda>0$. We call $u\in\tilde{H}^{2,p}$ a strong solution to \eqref{PDE} if for a.e. $x\in\mR^d$,
 \begin{align*}
     \lambda u(x)-a_{ij}(x)\partial_{ij} u(x)+b(x)\cdot \nabla u(x)=f(x).
 \end{align*}
 \end{definition}
 We assume
 \begin{assumption}\label{Assumption-pde}
  \begin{itemize}
  \item[($H^a$)] there exist  $0<K_1\leq K_2$ such that for all $x\in\mathbb{R}^d$,
  \begin{align}\label{uni-elliptic}
    K_1|\zeta|^2\leq\langle a(x)\zeta,\zeta\rangle\leq K_2|\zeta|^2,\quad \forall \zeta\in\mathbb{R}^d,
  \end{align}
  and $a(\cdot)$ is $\alpha$-H\"older continuous with
\begin{align}
     \label{a-delta}
   \omega_\alpha(a):=\sup_{x,y\in\mathbb{R}^d,x\neq y,|x-y|\leq 1}\frac{\Vert a(x)-a(y)\Vert}{|x-y|^\alpha}<\infty
 \end{align}
 for some $\alpha\in(0,1]$.
      \item[($H^b$)] $b\in \tilde L_{p_1}(\mR^d)$ for some $p_1\in(d,\infty]$.
  \end{itemize}
 \end{assumption}
 
 In this section we will show estimates of the solution of the elliptic PDE above.  Such estimates were obtained in  \cite[Theorem 3.3]{Zhangzhao2018} in the case where $a$ is uniformly elliptic and uniformly continuous and $b\in L_{p_1}$ for some $p_1>d$. These estimates were, however, not  
 explicit in terms of the  coefficients $a$, $b$ and $f$. We prove the following theorem which shows this dependence since we need it in the main text but it may also be of independent interest.
 
 \begin{theorem}
\label{aprior-est-local}
Suppose \cref{Assumption-pde} holds.  There exists a constant $C_0>0$  depending on $p$, $p_1$, $\alpha$ and $d$ only, such that for $\lambda\geq C_0K_1\Big(\frac{K_2^2}{K_1^2}(\frac{K_1+\omega_\alpha(a)}{K_1})^{\frac{2}{\alpha}}+(\frac{K_1+\omega_\alpha(a)}{K_1})^{\frac{d}{\alpha}\frac{2}{1-d/p_1}}(\frac{\Vert b\Vert_{\tilde L_{p_1}}}{K_1})^{\frac{2}{1-d/p_1}}\Big)$ and   for any $ f\in\tilde L_p(\mR^d)$ with $p\in(d/2\vee1,p_1]$, there is a unique solution $u\in \tilde H^{2,p}$ to \eqref{PDE}.  Further, for $p'\in[1,\infty]$ there exists 
a constant $C$ depending on $\alpha,p,d,p'$ and $p_1$ only, such that 
\begin{align}
    \label{aprior-estimate-pde}
    &\Vert \nabla^2u\Vert_{\tilde L_{p}}\leq C\frac{1}{K_1}\Big(1+\frac{\omega_\alpha(a)}{K_1}\Big)^{d/\alpha}\Vert f\Vert_{\tilde L_p},\nonumber\\&\lambda^{(1+\frac{d}{p'}-\frac{d}{p})/2}\Vert \nabla u\Vert_{\tilde L_{p'}}\leq C{K_1}^{(\frac{d}{p'}-\frac{d}{p}-1)/2}\Big(1+\frac{\omega_\alpha(a)}{K_1}\Big)^{d/\alpha}\Vert f\Vert_{\tilde L_p}\quad\text{ if }\quad 1+\frac{d}{p'}-\frac{d}{p}>0,\nonumber\\& \lambda^{(2+\frac{d}{p'}-\frac{d}{p})/2}\Vert u\Vert_{\tilde L_{p'}}\leq C{K_1}^{(\frac{d}{p'}-\frac{d}{p})/2}
   \Big(1+\frac{\omega_\alpha(a)}{K_1}\Big)^{d/\alpha}\Vert f\Vert_{\tilde L_p} \quad\text{ if }\quad 2+\frac{d}{p'}-\frac{d}{p}>0.
\end{align}
 \end{theorem}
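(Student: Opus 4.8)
The plan is to treat the first-order term $b\cdot\nabla u$ as a perturbation of the leading operator $\lambda-a_{ij}\partial_{ij}$ and to handle the latter by \emph{freezing coefficients}; throughout, all constants depend only on $d,p,p_1,p',\alpha$. First I would record the constant-coefficient facts: for a constant positive matrix $A$ with $K_1|\zeta|^2\le\langle A\zeta,\zeta\rangle\le K_2|\zeta|^2$, the resolvent $v=(\lambda-A_{ij}\partial_{ij})^{-1}g=\int_0^\infty e^{-\lambda t}e^{tA_{ij}\partial_{ij}}g\,\dd t$ is controlled through the Gaussian kernel of $e^{tA_{ij}\partial_{ij}}$ (covariance $\asymp 2At$) together with Young's inequality for the $L_p\to L_{p'}$ and the gradient bounds, and through the H\"ormander--Mikhlin multiplier theorem for the second derivatives. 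Diagonalising $A$ and rescaling the frequency variable removes any $K_2$-dependence in the Calder\'on--Zygmund estimate, leaving $\Vert\nabla^2 v\Vert_{L_p}\le CK_1^{-1}\Vert g\Vert_{L_p}$, and more generally the ``balanced'' bound
\[
\lambda\Vert v\Vert_{L_p}+(\lambda K_1)^{1/2}\Vert\nabla v\Vert_{L_p}+K_1\Vert\nabla^2 v\Vert_{L_p}\le C\Vert g\Vert_{L_p},
\]
together with its $L_p\to L_{p'}$ versions, which carry the extra weights $\lambda^{\frac d2(\frac1p-\frac1{p'})}$ and $K_1^{-\frac d2(\frac1p-\frac1{p'})}$ and thereby reproduce the powers of $\lambda$ and $K_1$ in \eqref{aprior-estimate-pde}.

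For the case $b=0$ with variable $a$, I would fix a partition of unity $\{\xi_k\}$ subordinate to a cover of $\mR^d$ by balls of radius $\ell\in(0,1]$ with centres $x_k$, $|\nabla^j\xi_k|\lesssim\ell^{-j}$ and bounded overlap, and write the equation satisfied by $\xi_k u$ for the frozen operator $\lambda-a_{ij}(x_k)\partial_{ij}$. Its right-hand side consists of $\xi_k f$, the H\"older error $\xi_k\big(a(\cdot)-a(x_k)\big)\partial^2 u$ which on $\mathrm{supp}\,\xi_k$ has size $\le\omega_\alpha(a)\ell^\alpha$, and commutator terms bounded by $K_2(\ell^{-1}|\nabla u|+\ell^{-2}|u|)$. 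Applying the balanced bound to each $\xi_k u$, summing over $k$ and converting $\Vert\nabla^j(\xi_k u)\Vert$ back to $\Vert\xi_k\nabla^j u\Vert$ modulo lower-order terms, one gets an inequality in the localised norms $\Vert\cdot\Vert_{(\tilde L_p)_\ell}$; choosing $\ell\asymp\big(K_1/(K_1+\omega_\alpha(a))\big)^{1/\alpha}$ makes the coefficient in front of $\Vert\nabla^2 u\Vert_{(\tilde L_p)_\ell}$ at most $K_1/2$ and hence absorbable, while absorbing the remaining terms $K_2\ell^{-1}\Vert\nabla u\Vert$ and $(K_1+K_2)\ell^{-2}\Vert u\Vert$ into $(\lambda K_1)^{1/2}\Vert\nabla u\Vert$ and $\lambda\Vert u\Vert$ forces exactly $\lambda\gtrsim K_2^2K_1^{-1}\ell^{-2}\asymp K_1\frac{K_2^2}{K_1^2}\big(\frac{K_1+\omega_\alpha(a)}{K_1}\big)^{2/\alpha}$, the first term of the stated threshold on $\lambda$. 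Passing from scale $\ell$ to scale $1$ via \eqref{ineq:norms} costs a factor $\ell^{-d}\asymp\big(1+\omega_\alpha(a)/K_1\big)^{d/\alpha}$, producing precisely that factor in \eqref{aprior-estimate-pde}; the sharp $L_{p'}$ statements follow from one more freezing step in which the $L_p\to L_{p'}$ smoothing bounds of the first paragraph are applied to $\xi_k u$ with the already-controlled $\xi_k f$, $\omega_\alpha(a)\ell^\alpha\nabla^2 u$ and commutators measured in $\tilde L_p$. Existence and uniqueness in $\tilde H^{2,p}$ for $b=0$ come out of the same scheme via the continuity method, yielding a bounded solution operator $R_\lambda\colon\tilde L_p\to\tilde H^{2,p}$ (here $2p>d$ gives $\tilde H^{2,p}\hookrightarrow\tilde L_\infty$).

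To incorporate $b$, I would use H\"older's inequality in the localised spaces, $\Vert b\cdot\nabla v\Vert_{\tilde L_p}\le C\Vert b\Vert_{\tilde L_{p_1}}\Vert\nabla v\Vert_{\tilde L_{p''}}$ with $\frac1{p''}=\frac1p-\frac1{p_1}$, and combine it with the gradient bound of the previous paragraph at exponent $p''$ (legitimate because $1+\frac d{p''}-\frac dp=1-\frac d{p_1}>0$ since $p_1>d$) to get
\[
\Vert b\cdot\nabla R_\lambda g\Vert_{\tilde L_p}\le C\Vert b\Vert_{\tilde L_{p_1}}K_1^{-(1+d/p_1)/2}\big(1+\tfrac{\omega_\alpha(a)}{K_1}\big)^{d/\alpha}\lambda^{-(1-d/p_1)/2}\Vert g\Vert_{\tilde L_p}.
\]
Once $\lambda$ also exceeds $C_0K_1\big(\tfrac{K_1+\omega_\alpha(a)}{K_1}\big)^{\frac d\alpha\frac2{1-d/p_1}}\big(\tfrac{\Vert b\Vert_{\tilde L_{p_1}}}{K_1}\big)^{\frac2{1-d/p_1}}$ — the second term of the threshold — this operator has $\tilde L_p$-norm $\le\frac12$, so $I+b\cdot\nabla R_\lambda$ is invertible on $\tilde L_p$ by Neumann series and $u:=R_\lambda(I+b\cdot\nabla R_\lambda)^{-1}f$ is the unique solution in $\tilde H^{2,p}$ of \eqref{PDE} (uniqueness because a homogeneous solution $w$ satisfies $w=-R_\lambda(b\cdot\nabla w)$ with $\Vert b\cdot\nabla R_\lambda\Vert<1$). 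Setting $g:=f-b\cdot\nabla u$ one has $\Vert b\cdot\nabla u\Vert_{\tilde L_p}=\Vert(b\cdot\nabla R_\lambda)g\Vert_{\tilde L_p}\le\frac12\Vert g\Vert_{\tilde L_p}$, hence $\Vert g\Vert_{\tilde L_p}\le2\Vert f\Vert_{\tilde L_p}$; since $u=R_\lambda g$, the $b=0$ estimates applied to $g$ yield \eqref{aprior-estimate-pde}, and taking $C_0$ to be the larger of the constants from the two steps makes the overall threshold the sum of the two contributions.

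I expect the main obstacle to be the second paragraph: carrying out the coefficient-freezing argument inside the localised $\tilde L_p$-scale while keeping the dependence on $K_1,K_2,\omega_\alpha(a)$ completely explicit — in particular recognising that the commutator (lower-order) terms are exactly what force the $K_2^2/K_1^2$-dependence in the threshold on $\lambda$, and that the change of localisation scale in \eqref{ineq:norms} is what produces the $\big(1+\omega_\alpha(a)/K_1\big)^{d/\alpha}$ factor. The constant-coefficient analysis and the drift perturbation are comparatively routine.
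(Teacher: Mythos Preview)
Your proposal is correct and follows essentially the same three–step scheme as the paper: constant-coefficient resolvent bounds with the explicit $K_1$-scaling, a freezing-of-coefficients argument at scale $\ell\asymp\big(K_1/(K_1+\omega_\alpha(a))\big)^{1/\alpha}$ (which produces both the first part of the $\lambda$-threshold via the commutator terms and the factor $(1+\omega_\alpha(a)/K_1)^{d/\alpha}$ from the change of localisation radius), and finally treating $b\cdot\nabla u$ as a perturbation using H\"older with exponent $p''$ defined by $1/p''=1/p-1/p_1$ to obtain the second part of the threshold. The only cosmetic differences are that the paper reduces the constant-coefficient case to $\lambda-\Delta$ by the linear change of variables $x\mapsto\sigma x$ rather than analysing the heat kernel directly, carries out the localisation with a single cutoff $\xi^{z,\delta}$ and a supremum over $z$ rather than a partition of unity, and appeals to the continuity method for existence rather than writing the Neumann series explicitly; none of this affects the argument or the constants.
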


 \begin{proof}
Assume $u\in \tilde{H}^{2,p}$ is a solution to \eqref{PDE}. We first show the \emph{a  priori} estimates \eqref{aprior-estimate-pde}. Then the {\em continuity method}, as shown in \cite{Krylov},  is a standard way to conclude the existence and uniqueness of the solution to \eqref{PDE} for those $\lambda$ for which \eqref{aprior-estimate-pde} holds.    We divide the proof into three steps. Note  that all  positive constants $C_i,i=1,\cdots$  appearing in the proof only depend on  $d, p, p_1, p',\alpha$ (and not on $\lambda$, $f$, $b$, $a$, and $\omega_\alpha(a)$).\\

 \emph{Step 1. Assume that $a$ is a constant (positive definite)  matrix, $b=0$ and $f\in L_p$.} \\
 
For $\lambda>0$, let $v\in H^{2,p}$ be the solution to the following equation
 $$  \lambda v-\Delta v=\tilde f,\quad \tilde f(x):=f(\sigma x),\quad x\in\mR^d,$$
  where $\sigma$ is the unique positive definite matrix satisfying $\sigma\sigma^*=a$.   Then $v= (\lambda-\Delta)^{-1}\tilde f$ is the unique solution in $H^{2,p}$. From \cite[(3.3)]{Zhangzhao2018} we know that, for each $p'\in[1,\infty]$, there are constants $C_1, C_2, C_3$  such that 
 \begin{align}\label{est:resolvent-eq}
     &\Vert \nabla^2v\Vert_{L_{p}}\leq C_1\Vert \tilde f\Vert_{L_p},\quad \nonumber\\&\lambda^{(1+\frac{d}{p'}-\frac{d}{p})/2} \Vert \nabla v\Vert_{L_{p'}}\leq C_2\Vert \tilde f\Vert_{L_p},\quad\text{ if }\quad 1+\frac{d}{p'}-\frac{d}{p}>0,  \nonumber\\&\lambda^{(2+\frac{d}{p'}-\frac{d}{p})/2} \Vert v\Vert_{L_{p'}}\leq C_3\Vert \tilde f\Vert_{L_p}\quad\text{ if }\quad 2+\frac{d}{p'}-\frac{d}{p}>0.
 \end{align}
    Let $u(x):=v(\sigma^{-1} x)$, i.e. $v(x)=u(\sigma x)$. Observe that
 \begin{align*}
    \partial_iv(x)=\partial_ku(\sigma x)\sigma_{ki},\quad \partial_{ij}v(x)=\partial_{kr}u(\sigma x)\sigma_{ki}\sigma_{rj}.\quad
 \end{align*}
 Therefore
 \begin{align*}
    (\lambda -\Delta) v(x)=(\lambda -a_{ij}\partial_{ij})u(\sigma x)
 \end{align*}
 and hence $u$ solves \eqref{PDE}. Uniqueness of a solution under the conditions of Step 1 holds since 
 the map $v \mapsto u$ is  a bijection between solutions of the corresponding PDEs.
 Considering
 $$  \frac{1}{K_1^{p}}\Vert \nabla^2 v\Vert_{L_{p}}^{p}
 \geq\text{det}\sigma^{-1}  \Vert \nabla^2 u\Vert_{L_{p}}^{p},\quad  \frac{1}{K_1^{p'/2}}\Vert \nabla v\Vert_{L_{p'}}^{p'}\geq\text{det}\sigma^{-1}  \Vert \nabla u\Vert_{L_{p'}}^{p'},\quad\Vert \tilde f\Vert_{L_p}^{p}=\text{det}\sigma^{-1}\Vert  f\Vert_{L_p}^{p},$$
then \eqref{est:resolvent-eq} yields 
 \begin{align}
 \Vert \nabla^2u\Vert_{L_{p}}&\leq C_1\frac{1}{K_1}\Vert f\Vert_{L_p},\quad \nonumber \\
      \lambda^{(1+\frac{d}{p'}-\frac{d}{p})/2} \Vert \nabla u\Vert_{L_{p'}}&\leq C_2(\text{det}\sigma^{-1} )^{\frac{1}{p}-\frac{1}{p'}}\frac{1}{\sqrt{K_1}}\Vert  f\Vert_{L_p}, \quad\text{ if }\quad 1+\frac{d}{p'}-\frac{d}{p}>0,
  \nonumber \\ \lambda^{(2+\frac{d}{p'}-\frac{d}{p})/2} \Vert u\Vert_{L_{p'}}&\leq C_3(\text{det}\sigma^{-1} )^{\frac{1}{p}-\frac{1}{p'}}\Vert  f\Vert_{L_p}\quad\text{ if }\quad 2+\frac{d}{p'}-\frac{d}{p}>0.\label{est:first-second}
 \end{align}
We know that $\text{det}\sigma=\prod_{i=1}^d\sqrt{\lambda_i}$ where $\lambda_i>0,i=1,\cdots,d,$ are the eigenvalues of $a$. From \eqref{uni-elliptic} we get
$\lambda_i\in[K_1,K_2]$. Therefore
\begin{align}\label{det-sigma}
  \text{det}\sigma^{-1}\in[K_2^{-\frac{d}{2}},K_1^{-\frac{d}{2}}].  
\end{align}
Using    \eqref{est:first-second} and \eqref{det-sigma}, we finally get
  \begin{align}\label{est-resl}
    & \Vert \nabla^2u\Vert_{L_{p}}\leq C_1\frac{1}{K_1}\Vert f\Vert_{L_p},\quad \nonumber \\ &\lambda^{(1+\frac{d}{p'}-\frac{d}{p})/2} \Vert \nabla u\Vert_{L_{p'}}\leq C_2{K_1}^{(\frac{d}{p'}-\frac{d}{p}-1)/2}\Vert  f\Vert_{L_p} \quad\text{ if }\quad 1+\frac{d}{p'}-\frac{d}{p}>0,
  \nonumber \\ &\lambda^{(2+\frac{d}{p'}-\frac{d}{p})/2} \Vert u\Vert_{L_{p'}}\leq C_3{K_1}^{(\frac{d}{p'}-\frac{d}{p})/2}\Vert  f\Vert_{L_p} \quad\text{ if }\quad 2+\frac{d}{p'}-\frac{d}{p}>0.
 \end{align}\\
  \emph{Step 2. $a$ satisfies \cref{Assumption-pde} $(H^a)$, $b=0$ and $f\in\tilde L_p$.} \\
  
 Here we apply the freezing coefficient argument.  For $\delta>0$ which will be determined later, let $\xi^\delta(\cdot):=\xi(\frac{\cdot}{\delta})$ where $\xi$ is the same function which we used to define 
 the localized spaces. For $z \in \R^d$ denote 
 $$\xi^{z,\delta}(x):=\xi^\delta(x-z),\quad a^z:=a(z), \quad u^z(x):=\xi^{z,\delta}(x)u(x),\quad f^z(x):=\xi^{z,\delta}(x)f(x).$$
 Observe that 
 \begin{align*}
     \lambda u^z-a_{ij}^z\partial_{ij}u^z=h_z
 \end{align*}
 where
 \begin{align*}
     h_z:=&f^z+(a_{ij}\partial_{ij}u )\xi^{z,\delta}-a_{ij}^z\partial_{ij}u^z
     \\=&f^z+(a_{ij}-a_{ij}^z)\partial_{ij}u\cdot\xi^{z,\delta}-a_{ij}^z(\partial_iu\partial_j\xi^{z,\delta}+\partial_ju\partial_i\xi^{z,\delta}+u\partial_{ij}\xi^{z,\delta}).
 \end{align*}
From  \cite[p18, 2. Corollary]{Krylov}, we know that  there exists some $N_0>0$ such that for any $\bar u \in H^{2,p}$ and $\epsilon>0$ we have
 $$\Vert \nabla \bar u\Vert_{L_p}\leq \epsilon \Vert\nabla^2\bar u\Vert_{L_p}+N_0\epsilon^{-1}\Vert  \bar u\Vert_{L_p}.$$
 Therefore
 \begin{align}\label{hz}
     \Vert h_z\Vert_{L_p}\leq& C_4\big(\Vert f^z\Vert_{L_p}+\omega_\alpha(a)\delta^\alpha\Vert \nabla^2 u\cdot\xi^{z,\delta}\Vert_{L_p}+K_2\Vert \nabla u\cdot\nabla\xi^{z,\delta}\Vert_{L_p}+K_2\Vert  u\cdot\nabla^2\xi^{z,\delta}\Vert_{L_p}\big)
      \nonumber  \\\leq& C_4\big(\Vert f^z\Vert_{L_p}+2\omega_\alpha(a)\delta^\alpha\Vert \nabla^2( u\cdot\xi^{z,\delta})\Vert_{L_p}+(K_2+2\omega_\alpha(a)\delta^\alpha)\Vert \nabla u\cdot\nabla\xi^{z,\delta}\Vert_{L_p}
      \nonumber\\&\quad\quad\quad\quad+(K_2+2\omega_\alpha(a)\delta^\alpha)\Vert  u\cdot\nabla^2\xi^{z,\delta}\Vert_{L_p}\big)
   \nonumber  \\\leq& C_5\big(\Vert f^z\Vert_{L_p}+2\omega_\alpha(a)\delta^\alpha\Vert \nabla^2 u^z\Vert_{L_p}+(K_2+2\omega_\alpha(a)\delta^\alpha)\delta^{-1}\Vert \nabla u\cdot\xi^{z,\delta}\Vert_{L_p}\nonumber\\&\quad\quad\quad\quad+(K_2+2\omega_\alpha(a)\delta^\alpha)\delta^{-2}\Vert  u\cdot\xi^{z,\delta}\Vert_{L_p}\big)\nonumber  \\\leq& C_5\big(\Vert f^z\Vert_{L_p}+2\omega_\alpha(a)\delta^\alpha\Vert \nabla^2 u^z\Vert_{L_p}+(K_2+2\omega_\alpha(a)\delta^\alpha)\delta^{-1}(\Vert \nabla  u^z\Vert_{L_p}+\Vert  u\cdot\nabla\xi^{z,\delta}\Vert_{L_p})
      \nonumber\\&\quad\quad\quad\quad+(K_2+2\omega_\alpha(a)\delta^\alpha)\delta^{-2}\Vert  u\cdot\xi^{z,\delta}\Vert_{L_p}\big)
    \nonumber \\\leq& C_6\big(\Vert f^z\Vert_{L_p}+(2\omega_\alpha(a)\delta^\alpha+\epsilon (K_2+2\omega_\alpha(a)\delta^\alpha)\delta^{-1}) \Vert \nabla^2u^z\Vert_{L_p}
   \nonumber\\&\quad\quad\quad\quad +(K_2+2\omega_\alpha(a)\delta^\alpha)(
    \epsilon^{-1}\delta^{-1}+\delta^{-2})\Vert  u\cdot\xi^{z,\delta}\Vert_{L_p}),
 \end{align}
 where $\omega_\alpha(a)$ is from \eqref{a-delta}.
 Assuming (without loss of generality) that $C_6\ge 1/6$, we define 
 \begin{align}
     \label{def:delta}\delta:= \Big(\frac{K_1}{6C_6(K_1+2\omega_\alpha(a))}\Big)^{1/\alpha}<1,\quad \epsilon:=\frac{K_1\delta}{6C_6(K_2+2\omega_\alpha(a)\delta^\alpha)}.
 \end{align}
 It is easy to see that $C_6\frac{1}{K_1}(2\omega_\alpha(a)\delta^\alpha+\epsilon (K_2+2\omega_\alpha(a)\delta^\alpha)\delta^{-1})<\frac{1}{2}$, and   $$(K_2+2\omega_\alpha(a)\delta^\alpha)(\epsilon^{-1}\delta^{-1}+\delta^{-2})\leq C_7\frac{K_2^2}{K_1}(\frac{K_1+\omega_\alpha(a)}{K_1})^{\frac{2}{\alpha}}.$$ So we get from \eqref{est-resl} and \eqref{hz} that 
 \begin{align}\label{lambda-uz-lp}
     \Vert \nabla^2u^z\Vert_{L^{p}}\leq C_8\frac{1}{K_1}(\Vert f^z\Vert_{L_p}+\frac{K_2^2}{K_1}(\frac{K_1+\omega_\alpha(a)}{K_1})^{\frac{2}{\alpha}}
     \Vert  u^z\Vert_{L_p}).
     \end{align}
     Plugging this into  \eqref{hz} yields  $$  \Vert h_z\Vert_{L_p}\leq C_6\Big(\Vert f^z\Vert_{L_p}+\frac{C_8}{2C_6}\Big(\Vert f^z\Vert_{L_p}+\frac{K_2^2}{K_1}(\frac{K_1+\omega_\alpha(a)}{K_1})^{\frac{2}{\alpha}}\Vert  u^z\Vert_{L_p}\Big)+C_7\frac{K_2^2}{K_1}(\frac{K_1+\omega_\alpha(a)}{K_1})^{\frac{2}{\alpha}}\Vert  u^z\Vert_{L_p}\Big).$$
 Using the second inequality  in \eqref{est-resl}  we  get for $1+\frac{d}{p'}-\frac{d}{p}>0$
     \begin{align}\label{lambda-nablauz}
     \lambda^{(1+\frac{d}{p'}-\frac{d}{p})/2} \Vert \nabla u^z\Vert_{L^{p'}}\leq&  C_9{K_1}^{(\frac{d}{p'}-\frac{d}{p}-1)/2}\Big(\Vert f^z\Vert_{L_p}+\frac{K_2^2}{K_1}(\frac{K_1+\omega_\alpha(a)}{K_1})^{\frac{2}{\alpha}}\Vert  u^z\Vert_{L_p}\Big).
     \end{align}
     Similarly, for $2+\frac{d}{p'}-\frac{d}{p}>0$
     \begin{align}\label{lambda-uz}
      \lambda^{(2+\frac{d}{p'}-\frac{d}{p})/2} \Vert u^z\Vert_{L_{p'}}\leq& C_{10}{K_1}^{(\frac{d}{p'}-\frac{d}{p})/2}\Big(\Vert f^z\Vert_{L_p}+\frac{K_2^2}{K_1}(\frac{K_1+\omega_\alpha(a)}{K_1})^{\frac{2}{\alpha}}\Vert  u^z\Vert_{L_p}\Big).
 \end{align}
 Let $p'=p$.  Then
 \begin{align}\label{global-estimate}
 \lambda\Vert  u^z\Vert_{L_p}\leq C_{10}\Big(\Vert f^z\Vert_{L_p}+\frac{K_2^2}{K_1}(\frac{K_1+\omega_\alpha(a)}{K_1})^{\frac{2}{\alpha}}\Vert  u^z\Vert_{L_p}\Big).
 \end{align}
Taking $\lambda \geq 2 C_{10}\frac{K_2^2}{K_1}(\frac{K_1+\omega_\alpha(a)}{K_1})^{\frac{2}{\alpha}}=:C_{10}\kappa$   we obtain
$$\Vert  u^z\Vert_{L_p}\leq \frac{C_{10}}{\lambda -C_{10}\frac{K_2^2}{K_1}(\frac{K_1+\omega_\alpha(a)}{K_1})^{\frac{2}{\alpha}}}\Vert  f^z\Vert_{L_p},\quad \frac{K_2^2}{K_1}(\frac{K_1+\omega_\alpha(a)}{K_1})^{\frac{2}{\alpha}}\Vert  u^z\Vert_{L_p}\leq \Vert  f^z\Vert_{L_p}.$$ 
Together with \eqref{lambda-uz-lp}, \eqref{lambda-uz},  and \eqref{lambda-nablauz}, we have
\begin{align}
    \label{est:u-delta-z}
      \Vert \nabla^2u^z\Vert_{L^{p}}\leq& C_{12}\frac{1}{K_1}\Vert f^z\Vert_{L_p},\nonumber\\
     \lambda^{(1+\frac{d}{p'}-\frac{d}{p})/2} \Vert \nabla u^z\Vert_{L^{p'}}\leq&  C_{13}{K_1}^{(\frac{d}{p'}-\frac{d}{p}-1)/2}\Vert f^z\Vert_{L_p},\quad \text{ if }\quad 1+\frac{d}{p'}-\frac{d}{p}>0,\nonumber\\
      \lambda^{(2+\frac{d}{p'}-\frac{d}{p})/2} \Vert u^z\Vert_{L_{p'}}\leq& C_{14}{K_1}^{(\frac{d}{p'}-\frac{d}{p})/2}\Vert f^z\Vert_{L_p},\quad \text{ if }\quad 2+\frac{d}{p'}-\frac{d}{p}>0.
\end{align}
 From definition \eqref{chi} we know that, for each $z \in \R^d$, $\Vert  u^z\Vert_{L_p}\leq \Vert  u\Vert_{\tilde L_p}\lesssim\delta^{-d}\sup_{\bar z}\Vert  u^{\bar z}\Vert_{L_p}$\footnote{Recall that in \cref{notation} we assumed that the localized spaces are defined using the function $\xi^1$}, so we
 get
 from   \eqref{est:u-delta-z} that 
for any $\lambda\geq C_{10} \kappa$ we have
 \begin{align}\label{est-resl1}
   \lambda^{(2+\frac{d}{p'}-\frac{d}{p})/2} \Vert  u\Vert_{\tilde L_{p'}} &\leq C_{15}{K_1}^{(\frac{d}{p'}-\frac{d}{p})/2}\delta^{-d}\Vert  f\Vert_{\tilde L_p}\quad\text{ if }\quad 2+\frac{d}{p'}-\frac{d}{p}>0,\nonumber\\
   \lambda^{(1+\frac{d}{p'}-\frac{d}{p})/2} \Vert \nabla u\Vert_{\tilde L_{p'}}&\leq  \lambda^{(1+\frac{d}{p'}-\frac{d}{p})/2} \sup_z(\Vert \nabla u^z\Vert_{ L_{p'}}+\Vert u\nabla \xi^{z,1}\Vert_{ L_{p'}})\nonumber\\&\leq  C_{16}({K_1}^{(\frac{d}{p'}-\frac{d}{p}-1)/2}+\lambda^{-1/2}{K_1}^{(\frac{d}{p'}-\frac{d}{p})/2})\delta^{-d}\Vert  f\Vert_{\tilde L_p}
   \nonumber\\&\leq  C_{17}{K_1}^{(\frac{d}{p'}-\frac{d}{p}-1)/2}\delta^{-d}\Vert  f\Vert_{\tilde L_p} \quad\text{ if }\quad 1+\frac{d}{p'}-\frac{d}{p}>0,   \nonumber\\
    \Vert \nabla^2u\Vert_{\tilde L^{p}}&\leq  \sup_z(\Vert \nabla^2 u^z\Vert_{ L_{p}}+\Vert u\nabla^2 \xi^{z,1}\Vert_{ L_{p}}+2\Vert \nabla u\nabla \xi^{z,1}\Vert_{ L_{p}})\nonumber\\&\leq  C_{18}\Big(\frac{1}{K_1}+\lambda^{-1}
    +\lambda^{-1/2}{K_1}^{-1/2}\Big)\delta^{-d}\Vert f\Vert_{\tilde L_p})
    \nonumber\\&\leq  C_{19}\frac{1}{K_1}\delta^{-d}\Vert f\Vert_{\tilde L_p}.
 \end{align}

  \emph{Step 3. $a$ is H\"older continuous and \cref{Assumption-pde} $(H^a)$ holds,  $|b|\in\tilde L_{p_1}$ and $f\in\tilde L_p$.}\\
  
  By \eqref{est-resl1} and H\"older's inequality, we have for $\lambda\geq C_{10}\kappa$  and $1+\frac{d}{p'}-\frac{d}{p}>0$
  \begin{align*}
      \lambda^{(1+\frac{d}{p'}-\frac{d}{p})/2} \Vert \nabla u\Vert_{\tilde L_{p'}}\leq &C_{17}{K_1}^{(\frac{d}{p'}-\frac{d}{p}-1)/2}\delta^{-d}\Vert  f+b\cdot\nabla u\Vert_{\tilde L_p}
      \\\leq &C_{17}{K_1}^{(\frac{d}{p'}-\frac{d}{p}-1)/2}\delta^{-d}(\Vert  f\Vert_{\tilde L_p}+\Vert  b\Vert_{\tilde L_{p_1}}\Vert \nabla u  \Vert_{\tilde L_{p_2}})
  \end{align*}
  where $p_1,p_2\in(p,\infty)$ and $\frac{1}{p_1}+\frac{1}{p_2}=\frac{1}{p}$.
  Let $p'=p_2$. Then we get
  \begin{align*}
      \lambda^{(1-\frac{d}{p_1})/2}\Vert \nabla u  \Vert_{\tilde L_{p_2}}\leq& C_{20}{K_1}^{(-\frac{d}{p_1}-1)/2}\delta^{-d}(\Vert  f\Vert_{\tilde L_p}+\Vert  b\Vert_{\tilde L_{p_1}}\Vert \nabla u  \Vert_{\tilde L_{p_2}}).
  \end{align*}
  Choosing $\lambda$ so large such that $$ \lambda^{(1-\frac{d}{p_1})/2}\geq C_{20}{K_1}^{\frac{-d/p_1-1}{2}}\delta^{-d}\Vert  b\Vert_{\tilde L_{p_1}},$$
  we get
  \begin{align*}
      \Vert \nabla u  \Vert_{\tilde L_{p_2}} \leq \frac{C_{20}{K_1}^{(-\frac{d}{p_1}-1)/2}}{\lambda^{(1-\frac{d}{p_1})/2}- C_{20}{K_1}^{\frac{-d/p_1-1}{2}}\delta^{-d}\Vert  b\Vert_{\tilde L_{p_1}}}\delta^{-d}\Vert  f\Vert_{\tilde L_p}.
  \end{align*}
  Moreover,
  \begin{align*}
      &\Vert b\cdot \nabla u\Vert_{\tilde L_p}\leq \frac{C_{20}
    {K_1}^{(-\frac{d}{p_1}-1)/2}\delta^{-d}\Vert  b\Vert_{\tilde L_{p_1}}}{\lambda^{(1-\frac{d}{p_1})/2}-C_{20}{K_1}^{(-\frac{d}{p_1}-1)/2}\delta^{-d}\Vert  b\Vert_{\tilde L_{p_1}}}\Vert  f\Vert_{\tilde L_p}=:\gamma\Vert  f\Vert_{\tilde L_p}.
  \end{align*}
  Using  \eqref{est-resl1} we see that  for any $\lambda$ such  that $\lambda\geq C_{10}\kappa$ and $ \lambda^{(1-\frac{d}{p_1})/2}\geq C_{20}{K_1}^{\frac{-1-d/p_1}{2}} \delta^{-d}\Vert  b\Vert_{\tilde L_{p_1}},$
  we have
  \begin{align*}
 \Vert \nabla^2u\Vert_{\tilde L_{p}}&\leq C_{21}(1+\gamma) \delta^{-d}\frac{1}{K_1}\Vert f\Vert_{\tilde L_p},\\ \lambda^{(1+\frac{d}{p'}-\frac{d}{p})/2} \Vert \nabla u\Vert_{\tilde L_{p'}}&\leq C_{22}(1+\gamma){K_1}^{(\frac{d}{p'}-\frac{d}{p}-1)/2} \delta^{-d}\Vert  f\Vert_{\tilde L_p}\quad\text{ if }\quad 1+\frac{d}{p'}-\frac{d}{p}>0,\\
  \lambda^{(2+\frac{d}{p'}-\frac{d}{p})/2} \Vert  u\Vert_{\tilde L_{p'}} &\leq C_{23}(1+\gamma){K_1}^{(\frac{d}{p'}-\frac{d}{p})/2} \delta^{-d}\Vert f\Vert_{\tilde L_p}\quad\text{ if }\quad 2+\frac{d}{p'}-\frac{d}{p}>0.
  \end{align*}
 Define   $  C_{24}:= \big(2C_{10}\big)\vee C_{20}$. Then,
  for $\lambda\geq C_{24}\kappa$ and $C_{24}\lambda^{-(1-\frac{d}{p_1})/2} {K_1}^{(-\frac{d}{p_1}-1)/2} \delta^{-d}\Vert  b\Vert_{\tilde L_{p_1}}<\frac{1}{2}$ (i.e. $\lambda \geq C_{24} K_1(\delta^{-d}\frac{\Vert b\Vert_{\tilde L_p}}{K_1})^{\frac{2}{1-d/p_1}}$) by taking $\lambda\geq C_{24}K_1\Big(\frac{K_2^2}{K_1^2}(\frac{K_1+\omega_\alpha(a)}{K_1})^{\frac{2}{\alpha}}+(\frac{K_1+\omega_\alpha(a)}{K_1})^{\frac{d}{\alpha}\frac{2}{1-d/p_1}}(\frac{\Vert b\Vert_{\tilde L_p}}{K_1})^{\frac{2}{1-d/p_1}}\Big)$ we get that there exists finite positive constant $C_{25}$  such that $ 1+\gamma\leq C_{25},$
   which finally shows the desired result \eqref{aprior-estimate-pde}  after plugging in the value of $\delta$ from \eqref{def:delta}.
 \end{proof}

 \begin{corollary}\label{homo}
 Let \cref{Assumption-pde} hold and $f=b^i,i=1,\cdots,d$ in \eqref{PDE}, let $p'\in[1,\infty]$. There exists some $C_0>0$ depending on $\alpha$, $p_1$ and $d$ only,  such that if we choose $\lambda\geq C_0K_1\Big(\frac{K_2^2}{K_1^2}(\frac{K_1+\omega_\alpha(a)}{K_1})^{\frac{2}{\alpha}}+(\frac{K_1+\omega_\alpha(a)}{K_1})^{\frac{d}{\alpha}\frac{2}{1-d/p_1}}(\frac{\Vert b\Vert_{\tilde L_p}}{K_1})^{\frac{2}{1-d/p_1}}\Big)$
 then for the solution $u^i$ to equation \eqref{PDE}    we have
 \begin{align}\label{gradu-infty}
   \Vert \nabla u^i\Vert_{\tilde L_{p'}}\leq \frac{1}{2}\lambda^{-\frac{d}{2p'}}K_1^{\frac{d}{2p'}}\leq\frac{1}{2}\quad\text{ if }\quad 1+\frac{d}{p'}-\frac{d}{p}>0,\nonumber\\  \Vert u\Vert_{\tilde L_{p'}}\leq \frac{1}{2}\lambda^{-\frac{1+d/p'}{2}}K_1^\frac{1+d/p'}{2}\leq\frac{1}{2} \quad\text{ if }\quad 2+\frac{d}{p'}-\frac{d}{p}>0.
 \end{align}
 \end{corollary}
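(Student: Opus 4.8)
The plan is to deduce the corollary directly from \cref{aprior-est-local}, applied with the choice $p=p_1$ and $f=b^i$ (so $f\in\tilde L_{p_1}$ by hypothesis $(H^b)$). First I would invoke \cref{aprior-est-local}: for every $\lambda$ exceeding the threshold stated there it produces a unique solution $u^i\in\tilde H^{2,p_1}$ of \eqref{PDE} with right-hand side $b^i$, together with the a priori bounds \eqref{aprior-estimate-pde}. Since $p_1>d$ we have $d/p_1<1$, so the side conditions $1+d/p'-d/p_1>0$ and $2+d/p'-d/p_1>0$ hold for \emph{every} $p'\in[1,\infty]$; hence both the first-order and the zeroth-order estimates in \eqref{aprior-estimate-pde} are available. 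One also has $\Vert b^i\Vert_{\tilde L_{p_1}}\le\Vert b\Vert_{\tilde L_{p_1}}$.

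Next I would rearrange. Taking $p=p_1$ and $f=b^i$ in \eqref{aprior-estimate-pde} and dividing the gradient bound by $\tfrac12\lambda^{-d/(2p')}K_1^{d/(2p')}$, the powers of $\lambda$ and $K_1$ simplify neatly: the exponent of $\lambda$ collapses to $-(1-d/p_1)/2$ and the exponent of $K_1$ to $-(1+d/p_1)/2$, so that
\begin{align*}
\frac{\Vert\nabla u^i\Vert_{\tilde L_{p'}}}{\tfrac12\lambda^{-d/(2p')}K_1^{d/(2p')}}\le 2C\,\lambda^{-(1-d/p_1)/2}K_1^{-(1+d/p_1)/2}\Big(1+\frac{\omega_\alpha(a)}{K_1}\Big)^{d/\alpha}\Vert b\Vert_{\tilde L_{p_1}},
\end{align*}
with $C$ the constant from \eqref{aprior-estimate-pde}. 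Performing the analogous computation for the zeroth-order bound against $\tfrac12\lambda^{-(1+d/p')/2}K_1^{(1+d/p')/2}$ yields exactly the \emph{same} right-hand side, the $p'$-dependence cancelling in both cases. Using the elementary identities $(1+\omega_\alpha(a)/K_1)^{2d/(\alpha(1-d/p_1))}=((K_1+\omega_\alpha(a))/K_1)^{\frac{d}{\alpha}\frac{2}{1-d/p_1}}$ and $K_1^{-(1+d/p_1)/(1-d/p_1)}\Vert b\Vert_{\tilde L_{p_1}}^{2/(1-d/p_1)}=K_1\big(\Vert b\Vert_{\tilde L_{p_1}}/K_1\big)^{2/(1-d/p_1)}$, the displayed ratio is seen to be $\le1$ as soon as
\begin{align*}
\lambda\ge(2C)^{\frac{2}{1-d/p_1}}K_1\Big(\frac{K_1+\omega_\alpha(a)}{K_1}\Big)^{\frac{d}{\alpha}\frac{2}{1-d/p_1}}\Big(\frac{\Vert b\Vert_{\tilde L_{p_1}}}{K_1}\Big)^{\frac{2}{1-d/p_1}}.
\end{align*}

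Finally I would choose $C_0$ larger than $(2C)^{2/(1-d/p_1)}$, larger than the constant appearing in the lower bound on $\lambda$ in \cref{aprior-est-local}, and at least $1$. Then the hypothesis of the corollary simultaneously forces $\lambda$ above the threshold of \cref{aprior-est-local} (so $u^i$ exists and \eqref{aprior-estimate-pde} holds) and makes both ratios $\le1$, which is precisely the asserted estimates $\Vert\nabla u^i\Vert_{\tilde L_{p'}}\le\tfrac12\lambda^{-d/(2p')}K_1^{d/(2p')}$ and $\Vert u^i\Vert_{\tilde L_{p'}}\le\tfrac12\lambda^{-(1+d/p')/2}K_1^{(1+d/p')/2}$. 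The final bounds by $\tfrac12$ are automatic: since $K_2\ge K_1$ and $\omega_\alpha(a)\ge0$ the bracket in the hypothesis is $\ge1$, so $\lambda\ge K_1$, whence $(K_1/\lambda)^{d/(2p')}\le1$ and $(K_1/\lambda)^{(1+d/p')/2}\le1$. The only genuine work here is the exponent bookkeeping in the second paragraph; everything else is a direct application of \cref{aprior-est-local}.
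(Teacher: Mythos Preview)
Your proof is correct and follows essentially the same approach as the paper: apply \cref{aprior-est-local} with $p=p_1$ and $f=b^i$, then rearrange the resulting a priori estimate so that the $p'$-dependence cancels, leaving a condition of the form $C\lambda^{-(1-d/p_1)/2}K_1^{-(1+d/p_1)/2}\big(\tfrac{K_1+\omega_\alpha(a)}{K_1}\big)^{d/\alpha}\Vert b\Vert_{\tilde L_{p_1}}\le\tfrac12$, which is precisely what the second summand in the lower bound on $\lambda$ guarantees. The paper's argument is identical in substance, only stated more tersely; your added remarks (that the side conditions on $p'$ are automatic since $p_1>d$, and that $\lambda\ge K_1$ gives the final ``$\le\tfrac12$'') make explicit what the paper leaves to the reader.
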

\begin{proof}
Notice that for such $\lambda$   we have 
$C_0\lambda^{-(1-\frac{d}{p_1})/2} {K_1}^{(-\frac{d}{p_1}-1)/2} (\frac{K_1+\omega_\alpha(a)}{K_1})^{\frac{d}{\alpha}}\Vert  b\Vert_{\tilde L_{p_1}}<\frac{1}{2}$,
so  by \eqref{aprior-estimate-pde} for $f=b^i$,  
\begin{align*}
  \Vert \nabla u^i\Vert_{\tilde L_{p'}}\leq C\lambda^\frac{-1-d/p'+d/p_1}{2}{K_1}^{\frac{-1-d/p_1+d/p'}{2}}(\frac{K_1+\omega_\alpha(a)}{K_1})^{\frac{d}{\alpha}}\Vert b^i\Vert_{\tilde L_{p_1}}\leq\frac{1}{2}\lambda^{-\frac{d}{2p'}}K_1^{\frac{d}{2p'}}\leq\frac{1}{2}.
\end{align*}
With the similar argument we get $\Vert u\Vert_{\tilde L_{p'}}\leq C\lambda^\frac{-2-d/p'+d/p_1}{2}{K_1}^{\frac{d/p'-d/p_1}{2}}(\frac{K_1+\omega_\alpha(a)}{K_1})^{\frac{d}{\alpha}}\Vert b\Vert_{\tilde L_{p_1}}\leq \frac{1}{2}\lambda^{-\frac{1+d/p'}{2}}K_1^\frac{1+d/p'}{2}.$
\end{proof}
\section*{Acknowledgments}
\footnotesize Inspiring suggestion from and fruitful discussions with Benjamin Gess (Bielefeld) are  appreciated. Discussions with Xicheng Zhang (Beijing) and Zimo Hao (Bielefeld) are  acknowledged. CL is  supported by the DFG through the research unit (Forschergruppe) FOR 2402 and  the Austrian Science Fund (FWF) via the project "Regularisation by noise in discrete and continuous systems" . 

	\end{document}